\def\red{\color{red}}
\tikzstyle{state}=[shape=circle,fill=none,draw=black,minimum size=0.5cm,line width=1pt]
\tikzstyle{none}=[shape=circle,minimum size=0.1cm,fill=none, draw=none]
\tikzstyle{tran}=[line width=1pt, draw=black]
\newtheorem{prop}{Proposition}
\newtheorem{theorem}{Theorem}
\newtheorem{lemma}{Lemma}
\newtheorem*{assumption}{Assumptions A${}^*$\hspace{-1.5mm}}
\newtheorem{corollary}{Corollary}
\def\N{{\mathbb N}}
\def\R{{\mathbb R}}
\def\P{{\mathbb P}}
\def\E{{\mathbb E}}
\def\eps{\varepsilon}
\newcommand{\diff}{\mathop{}\mathopen{}\mathrm{d}}
\newcommand\ind[1]{\mathbbm{1}_{\left\{#1\right\}}}
\newcommand\croc[1]{\left\langle #1\right\rangle}
\newcommand\steq[1]{\stackrel{\text{\rm #1.}}{=}}
\def\cal{\mathcal}
\def\eps{\varepsilon}
\title[Nucleation of Polymerization Processes]{On the Asymptotic Distribution of Nucleation Times of Polymerization Processes}
\address[Ph. Robert, W. Sun]{INRIA Paris, 2 rue Simone Iff, F-75012 Paris, France}
\author{Philippe  Robert}
\email{Philippe.Robert@inria.fr}
\urladdr{https://www-rocq.inria.fr/who/Philippe.Robert}
\author{Wen Sun${}^*$}\thanks{* The author's work has been partially supported by a public grant overseen by the French National Research Agency (ANR) as part of the ``Investissements d'Avenir'' program (reference: ANR-10-LABX-0098).}
\email{Wen.Sun@inria.fr}
\date{\today}
\begin{document}

\begin{abstract}
  In this paper, we investigate a stochastic model describing the time evolution of a polymerization process. A polymer is a macro-molecule resulting from the aggregation of several elementary sub-units called  monomers. Polymers can grow by addition of monomers or can be split into several polymers.  The initial state of the system consists mainly of monomers.  We study the time evolution  of the mass of polymers, in particular the asymptotic distribution of the first instant when the fraction of monomers used in polymers is above some positive threshold $\delta$. A scaling approach is used by taking the mass $N$ as a scaling parameter. The mathematical model used in this paper includes {\em a nucleation property}: If $n_c$ is defined as the size of the nucleus, polymers with a size less than $n_c$ are quickly fragmented into smaller polymers, at a rate proportional to $\Phi(N)$ for some  non-decreasing and unbounded function $\Phi$. For polymers of size greater than $n_c$, fragmentation still occurs but at bounded rates.   If $T^N$ is the instant  of creation of the first  polymer whose size is $n_c$,  it is shown that, under appropriate conditions,  the variable $T^N\!\!{/}[\Phi(N)^{n_c\!{-}\!2}\!\!{/}N]$ converges in distribution,  and that the first instant $L_\delta^N$ when a fraction $\delta$ of monomers is polymerized has the same order of magnitude. An original feature proved for this model is the significant variability of the variable $T^N$. This is a well known phenomenon observed in biological  experiments but few mathematical models of the literature have this property.   The results are proved via  a series of  technical estimates for occupation measures of some functionals of the corresponding Markov processes on fast time scales and by using coupling techniques.
\end{abstract}

\maketitle

\bigskip
\hrule

\vspace{-3mm}

\tableofcontents

\vspace{-10mm}

\hrule

\bigskip

\section{Introduction}
Polymerization  is an important phenomenon occurring in many areas, in particular for several biological processes.  In a biological context, some species of {\em proteins}, also called {\em monomers}, may be assembled via chemical reactions into aggregated states called {\em polymers}.  A polymer is an assembly of  proteins linked by some chemical bonds.  The {\em size/mass} of a polymer is the number of monomers composing it.  The random fluctuations  generated by the thermal noise within the biological cell are at the origin of the encounters of  polymers and monomers  which may lead to the growth of existing polymers.  These fluctuations may also break some chemical bonds within polymers and thus give polymers with  smaller sizes.  From an experimental point of view, the randomness of fluctuations seems to have an important impact on the time evolution of  polymerization processes. This is the main motivation of this  paper which investigates a stochastic model describing the time evolution of a set of polymers. 

Using classical notations for chemical reactions, for $k{\ge}1$, ${\cal X}_k$ represents a polymer of size $k$,  the polymerization process  analyzed in this paper can be represented as
\begin{equation}\label{ChimPol}
    \begin{cases}
  {\cal X}_1{+}{\cal X}_k {\overset{\kappa_{\rm{on}}^k}{\longrightarrow}} {\cal X}_{k+1},\\
\phantom{  {\cal X}_1{+} }{\cal X}_{k} {\overset{\kappa_{\rm{off}}^k}{\longrightarrow}}{\cal X}_{a_1}{+}{\cal X}_{a_2}{+}\cdots{+}{\cal X}_{a_p}, \quad  p{\geq}2, a_1{+}a_2+\cdots{+}a_p=k.
 \end{cases}
\end{equation}
The quantity $\kappa_{\rm{on}}^k$ [resp. $\kappa_{\rm{off}}^k$] is the chemical rate at which a polymer of size $k$ is bound with a monomer [resp. at which it is broken]. See Appendix~\ref{Ap1} for a more detailed description of the biological background. It should be noted that polymers can only grow by the addition of a monomer. This is in fact a common assumption in this domain as long as linear polymers, also called {\em fibrils} are considered.  The fragmentation mechanism is, a priori, arbitrary. 

\subsection*{The Nucleation Phenomenon}
We introduce the notion of {\em nucleation} which is a standard assumption in the corresponding biological literature but, to the best of our knowledge, does not seem to have been considered in the previous stochastic mathematical models of polymerization.   See Morris et al.~\cite{Morris}, Kashchiev~\cite{Kashchiev2}.  The assumption is that  polymers with small sizes are quite unstable.  They are very quickly broken by the random fluctuations of the environment. There is nevertheless a critical size $n_c$ above which a polymer is more stable. It can be still broken due to random events but at a much smaller rate.  The quantity $n_c$ is called the {\em nucleus size}, in this way polymerization can also be seen as a nucleation process as in the literature in physics. These assumptions are generally based on considerations of statistical mechanics  expressed in terms of the free energy of assemblies of proteins.  See Kashchiev~\cite{Kashchiev3}. With a slight abuse of notation we will say that nucleation has occurred when a polymer of size $n_c$ has been created for the first time. 

\subsection*{The Variability of Polymerization Processes}
When starting with only monomers in experiments, the system  stays for some time with a negligible polymerized mass, i.e.\ with few  polymers. The main  phenomenon observed is that there is some instant when a sharp phase transition occurs,  the polymerized mass goes then  from $0$ to reach very quickly its final value. This explosive behavior is a constant feature of  polymerization processes observed in experiments. Another  key  property from a biological point of view is that the instant when this event occurs, the {\em lag time}, varies significantly from an experiment to another. See Radford~\cite{Radford} and Figure~\ref{fig1} in the appendix. This property is believed to be at the origin of the long delays observed in the development of  several neuro-degenerative diseases such as the Bovine Spongiform Encephalopathy  (Mad Cow) or Alzheimer's disease for example.   See Appendix~\ref{Ap1} for a quick review and references for the biological aspects. The main goal of mathematical analysis in this domain is of providing a simple model exhibiting this phenomenon.

\subsection*{Central Limit vs Rare Events  Theorems}
An analysis of the fluctuations has been achieved in the literature in terms of  a central limit theorem (CLT) around this sigmoid function with $N$, the total mass, as a scaling parameter.  See Szavits et al.~\cite{Szavits}, Eug\`ene et al.~\cite{EXRD}, see also Doumic et al.~\cite{DER}.  These references generally consider only two species of macro-molecules, monomers  and polymerized monomers. Unfortunately, when compared with experimental data,  these mathematical models fail in general to  explain the order of magnitude of the variability observed in the experiments. The main reason, as it is natural in a classical CLT setting, is that the variance is of the lag time of the order of $\sqrt{N}$, and therefore  much smaller than its average value, of the order of $N$. This is not what is observed in the experiments. See Figure~\ref{fig1} in the appendix for example. For small volumes, estimations obtained in this way can nevertheless be reasonably accurate, see  Eug\`ene et al.~\cite{EXRD}. 

The variability can thus be hardly explained only by a central limit theorem.  The main result of this paper states that the fluctuations  are, more likely,  due to the occurrence of a set of rare events. This has been suggested in the biological literature, see Section~4~b) and~c) of Hofrichter~\cite{Hofrichter} for example, see also Yvinec et al~\cite{Yvinec}. But, to the best of our knowledge, it has never been established rigorously with a convenient mathematical model.  


\subsection*{A Scaling Assumption For the Stochastic Model}
In the chemical reactions~\eqref{ChimPol} the existence of a nucleus size is generally represented as follows: for a polymer of size $k{<}n_c$, the chemical rate of fragmentation is much larger that the chemical rate of association to another monomer, i.e. $\kappa_{\rm{off}}^k{\gg}\kappa_{\rm{on}}^k$. Otherwise, it is of the same order of magnitude  or much smaller.

A natural Markovian description involves a multi-dimensional state space, the state descriptor is given by a vector $u{=}(u_k)$ where, for $k{\ge}1$, $u_k$ is the number of polymers of size $k$. The growth of polymers of size $k$ is described by the interaction of the  $k$th coordinate and of the first coordinate,  $u_k$ and $u_1$. The fragmentation of a polymer  is a more intricate transition since a polymer can be  fragmented into a subset of polymers of smaller sizes.  In our Markovian model, this is translated in the following way, recall that  $N$ is the total mass of the system, in state $u{=}(u_k)$,
\begin{enumerate}
\item[(I)] {\sc Growth.} A monomer is added to a given polymer of size $k$ at rate
  \[
  \lambda_k \frac{u_1}{N},
  \]
for some positive constant $\lambda_k$.  Three coordinates of the state change at this occasion: $u_1{\to}u_1{-}1$, $u_k{\to}u_k{-}1$ and $u_{k{+}1}{\to}u_{k{+}1}{+}1$. See for example  Anderson and Kurtz~\cite{AndersonKurtz} for a general presentation of mathematical models of chemical reactions in a stochastic context. 
\item[(II)] {\sc Fragmentation.}  The fragmentation of a given polymer of size $k$ occurs at rate $\mu_k^N$, with
  \[
  \mu_k^N\steq{def}
  \begin{cases}
    \Phi(N)\mu_k&\text{ if } k{<}n_c,\\
    \mu_{n_c} & \phantom{ if  } \ k{\ge}n_c,
  \end{cases}
  \]
for some positive constants $\mu_k$, $1{\le}k{\leq}n_c$.  The way the polymer is fragmented is described by a {\em fragmentation measure}. See Section~\ref{Sec0}.
\end{enumerate}
  See Figure~\ref{fig2}.
Small polymers disappear at rate proportional to $\Phi(N)$, where $\Phi$ is some non-decreasing function converging to infinity. In our approach $N$,  the scaling parameter,  can be also thought as a volume. With this interpretation, the quantity $u_1/N$ used in  the rate of growth above is the concentration of ``free'' monomers, i.e. polymers of size $1$.  See van Kampen~\cite{vanKampen} or Yvinec et al.~\cite{yvinec2016}. The function $\Phi$ can be taken as $\Phi(x){=}x^\gamma$, with $\gamma{\in}(0,1]$. 

 \subsection*{The Main Results}
 Assuming that that $N$ is the mass of the system and that the initial state does not contain any polymer of size greater or equal to $n_c$ and consists mainly of monomers,  if  $T^N$, the first nucleation time, is the first instant when a polymer of size $n_c$ is created,   we prove that, under appropriate conditions, the convergence in distribution holds
 \begin{equation}\label{eqlim}
 \lim_{N\to+\infty} \frac{N}{\Phi(N)^{n_c-2}} T^N=E_{\overline{\rho}},
\end{equation}
where $E_{\overline{\rho}}$ is an exponential random variable with parameter $\overline{\rho}$, a constant depending on  the constants $\lambda_k$, $\mu_k$, $k{<}n_c$, the polymerization and fragmentation rates of polymers of sizes less than $n_{c}{-}1$.  See Relation\eqref{rho} for an explicit expression. In particular,  the variable $T^N$  has a significant variability, it variance being of the same order of magnitude in $N$ as its average value.  The variable $L_\delta^N$ , the {\em lag time}, is defined as  the first instant when a fraction $\delta$ of  monomers is polymerized into polymers of size greater than $n_c$. We show that, in the limit and under appropriate conditions, $L_\delta^N$ has the same order of magnitude in $N$ as $T^N$\!\!.  See  Theorem~\ref{theolag}.

The mathematical analysis is decomposed into two steps.
 \begin{enumerate}
 \item[1.] The evolution of the process until a polymer of size $n_c$ is created. With our scaling assumptions, this is a rare event as mentioned above, from this point of view the convergence~\eqref{eqlim} is natural in probability theory. For stochastic processes converging quickly to equilibrium, rare events are, generally, reached after an exponentially distributed amount of time with a large average. See Keilson~\cite{Keilson} and Aldous~\cite{Aldous2} for example.  Nevertheless, getting an explicit expression for the asymptotic exponential random variable and establishing the corresponding technical estimates  turn out to be a quite challenging problem. See Section~\ref{OneSec} and  Appendix.
  \item[2.] With an initial state with only one polymer of size $n_c$, if this polymer  grows sufficiently quickly, its fragmentation will give several stable polymers that will also consume monomers and therefore may generate other stable polymers. It turns out that this is a very fast way to create stable polymers.  It is shown in fact that, with this initial state, the number of stable polymers can be stochastically lower-bounded by a super critical branching process, in particular with positive probability it is growing exponentially fast or it dies out. This is the explosion phase of the polymerization process. 
 \end{enumerate}
 Our model has thus the two main characteristics observed in the experiments in biology: a take-off phase with a large variability, the corresponding exponential distribution,  and a steep growth, the super-critical branching process.  

 \medskip
 \noindent
     {\bf Remarks}
 \begin{enumerate}
\item[(i)]  Relation~\eqref{eqlim} is proved under quite general assumptions on the way a polymer of size $k{\ge}2$ is fragmented. In the stochastic model it is represented by a probability distribution $\nu_k$ on the space of all possible decompositions of the integer $k$. See Section~\ref{Sec0}.  The limiting distribution of Relation~\eqref{eqlim} does not depend in fact on $\nu_k$ but only on fragmentation and growth rates. The intuitive, non-rigorous, reason is the following:  the first polymer of size $n_c$ is built from successive additions of monomers and the key step is in fact the one to have the ``last'' monomer added to a polymer of size $n_c{-}1$. If it is fragmented before then, very quickly, it is reduced to  a set of $n_c{-}1$ monomers. Of course this is a rough description. The precise result is established in Section~\ref{OneSec}.
\item[(ii)] When $\Phi(N){=}N^\gamma$, as our results show, this phenomenon occurs when the nucleus size satisfies the relation $n_c{>}2{+}1/\gamma$. For the current mathematical literature, the nucleus size used is (implicitly sometimes)  $2$, which may explain that this phenomenon has not been yet established rigorously. 
\item[(iii)] The results are obtained via a series of (quite)  technical estimates for occupations measures of the corresponding Markov process on the  fast time scale $t{\mapsto}\Psi(N){\cdot}t{\steq{def}}\Phi(N)^{n_c{-}2}/N{\cdot}t$.  Stochastic calculus with Poisson processes,  coupling arguments and  branching processes  are the main ingredients of the proofs. It should be noted that one of the main difficulties while using stochastic calculus is of controlling the fluctuations of the associated martingales,  on the a priori very fast time scale $(\Psi(N)t)$.  This is where a coupling with a strongly ergodic process plays a major role. See the proof of Proposition~\ref{lem2}. 
     \end{enumerate}

\subsection*{Literature}
There is a past and recent interest in {\em growth-fragmentation models} which are generalizations of the polymerization process described by Relation~\eqref{ChimPol}. In the context of coagulation and fragmentation models, the Smoluchowski model is a classical mathematical model. See Aldous~\cite{Aldous} for a general survey.  For these models, the growth can occur also by coagulation and not only by addition of a single particle/monomer at a time. Additionally, the rates of occurrence of the growth or fragmentation events can be state dependent.  These processes are also used to study some population processes. The studies focus mainly on the existence of such processes, on their scaling properties  (like  self-similarity) or their branching process representation.  A special and important case for polymerization processes is the {\em Becker-D\"oring model} for which the chemical reactions are given by 
 \begin{equation}\label{ChimBD}
  {\cal X}_1{+}{\cal X}_k \stackrel{{\overset{\kappa_{\rm{on}}^k}{\longrightarrow}}}{ \underset{\kappa_{\rm{off}}^{k+1}}{\longleftarrow}}  {\cal X}_{k+1}, \, k{\geq}1.
 \end{equation}
In particular the fragmentation mechanism is, in some way, degenerated since at most one monomer can be detached at a time from a polymer.    In a deterministic setting, there is an associated system of ODEs  called the {\em Becker-D\"oring Dynamical System} $(c_k(t))$ solution of the system of differential equations
 \begin{equation}\tag{BD}
   \begin{cases}
\displaystyle     \dfrac{  \diff c_1}{\diff t} (t)={-}2J_{1}(c(t)){-}\sum_{k\geq2} J_{k}(c(t)), \\ \dfrac{  \diff c_k}{\diff t} (t)=J_{k-1}(c(t)){-}J_{k}(c(t)), \, k{>}1,
   \end{cases}
\end{equation}
 with  $J_k(c){=}\kappa_{\rm{on}}^kc_1c_k{-}\kappa_{\rm{off}}^{k+1}c_{k+1}$ if $c{=}(c_k){\in}\R_+^\N$ and with a convenient initial condition. This is a first order description of the process, the term $c_k(t)$ should be thought of the concentration of polymers of size $k{\geq}1$. See Becker and D\"oring~\cite{Becker} for the original paper.  The conditions of existence and uniqueness of solutions have been extensively investigated. See Ball et al.~\cite{Ball}  for example. 
 
 In the literature of mathematical models of this biological phenomenon it has been extensively used, see the various classes of  ODEs in Table~2 of Morris et al.~\cite{Morris}. With convenient parameters estimations, these ODEs can describe first order characteristics such as the mean concentration of polymers with a given size. See  Prigent et al.~\cite{Prigent} for example.   Due to their deterministic formulation, they cannot really be used to investigate the fluctuations of the polymerization process. 

In a stochastic context,  Jeon~\cite{Jeon} shows for the Smoluchowski model, that with Poisson processes governing the dynamics of the transitions of binary coagulation and fragmentation, then, under appropriate conditions,  a convergence result holds for the coordinates properly scaled  and the limit is the solution of a set of deterministic ODE's. For this model the corresponding functional central limit theorem has been proved  in Sun~\cite{Sun}. See  Szavits et al.~\cite{Szavits}, Eug\`ene et al.~\cite{EXRD} and Doumic et al.~\cite{DER} for related stochastic models.  The Becker-D\"oring model itself cannot be a convenient model to describe the variability of  polymerization processes. The fragmentation mechanism of the model, the removal of monomers one by one from a polymer cannot give an appropriate explosive  behavior as observed in practice. On this matter, Condition~A{-}3 of Assumption~${\rm A}^*$ of our model below is key to get this property.

\subsection*{Outline of the Paper}
Section~\ref{Sec0} introduces the  stochastic model used to investigate the polymerization process, together with the notations and definitions used throughout this paper. In  Section~\ref{OneSec} limiting results are proved for  the first instant when there is a stable polymer, see Theorem~\ref{TheoPois} and Proposition~\ref{hitprop}.   Section~\ref{LagSec} considers the dynamic of the polymerization  for the polymers whose sizes are above the level of nucleation $n_c$.  The main results for the asymptotic behavior of the distribution  of the lag time are then proved, see Theorem~\ref{theolag}. The appendix gives a quick overview of some biological aspects of these processes. 
\subsection*{Acknowledgments} The authors are grateful to Marie Doumic and Wei-Feng Xue for numerous conversations  on these topics. They have been very helpful for our understanding of these quite delicate phenomena. 
\section{Stochastic Model}\label{Sec0}
In this section we introduce the stochastic model describing the polymerization processes with a nucleation phenomenon. For this model, polymers of small sizes are unstable and quickly break into polymers with smaller sizes. 

The state of the model is represented by a vector $u{=}(u_i)$, where $u_i$ is the number of polymers of size $i$. 
For $p{\geq}1$ we define 
  \[
{\cal S}_p\steq{def}\left\{(u_i){\in}\N^{\N_+}:  u_1{+}2u_2{+}\cdots{+}\ell  u_\ell {+}\cdots{=}p\right\},
\]
the state space of the process is thus given by  ${\cal S_N}$.  Since for $u{\in}{\cal S}_p$, all components with index strictly greater than $p$ are null, we will occasionally use the slight abuse of notation $u{=}(u_1,\ldots,u_p)$ in this situation. We denote by
\begin{equation}\label{Sinfty}
 {\cal S}_\infty{=}\bigcup_{k\geq 0} {\cal S}_k,
\end{equation}
the set of states with finite mass. For $p{\in}\N$, $e_p$ will denote the $p$th unit vector of ${\cal S}_\infty$.

One starts from an initial state  with total mass $N$ and consisting mainly in monomers. It will be also interpreted as a volume parameter.  When the system is  in state $u{=}(u_k){\in}{\cal S}_N$ then, for $k{\geq}1$, the quantity $u_k/N$ is defined as the concentration of polymers of size $k$. In the following, $N$ is used as a scaling parameter.

In a deterministic context the law of mass action gives the Michaelis Menten's kinetics equations for the concentration of the various polymers. See Chapter~6 of Murray~\cite{Murray}. 

  \begin{center}
  \begin{figure}[ht]
\begin{tikzpicture}[->,node distance=1.4cm, thick]
     \node[state] (A) [below] {\ ${\scriptscriptstyle {\red 1}}$\ \  };
     \node[none] (AA) [above=3mm]{\ } (A);
     \node[state] (B) [right of=A] {\ ${\scriptscriptstyle {\red 2}}$\ \  };
     \node[none] (BB) [right of=AA] {\ }  ;
     \node[state] (C) [right of=B] {\ ${\scriptscriptstyle {\red 3}}$\ \  };
     \node[none] (CC) [right of=BB]{\ } (C);
     \node[none] (C0) [right of=C] {\ldots};    
     \node[none] (00) [right of=CC] {\ };    
     \node[none] (11) [right of=00] {\ };    
     \node[none] (22) [right of=11] {\ };    
     \node[none] (33) [right of=22] {\ };    
     \node[none] (44) [right of=33] {\ };    
     \node[state] (C1) [right of=C0] {${\scriptscriptstyle {\red n_c\!{-}\!2}}$};
     \node[state] (D) [right of=C1] {${\scriptscriptstyle {\red n_c\!{-}\!1}}$};
     \node[state] (E) [right of=D] {$\ \scriptstyle{{ n_c}}\ $};
     \node[state] (F) [right of=E] {$\scriptscriptstyle{{  n_c\!{+}\!1}}$};
     \node[none] (G) [right of=F] {\ldots};
     
     \path
     (A) edge[tran,bend right]  node[below]  {$\scriptstyle{[\lambda u]_1\frac{u_1}{{\red N}}}$} (B)
     (B) edge[tran,bend right]  node[above]  {$\scriptstyle{[\mu u]_2{\red \Phi(\!N\!)}}$} (A)

     (B)    edge[tran,bend right] node[below]  {$\scriptstyle{[\lambda u]_2\frac{u_1}{{\red N}}}$} (C)
     (C) edge[dashed,tran,bend right] node[above=1mm]  {$\scriptstyle{[\mu u]_3}{\red \Phi(\!N\!)}$}   (BB)

     (C) edge[tran,bend right]  node[below]  {$\scriptstyle{[\lambda u]_3\frac{u_1}{{\red N}}}$} (C0)
     (C0)   edge[dashed, tran,bend right] node[above=1mm]  {$\scriptstyle{[\mu u]_{4}{\red \Phi(\!N\!)}}$} (CC)

     (C0) edge[tran,bend right]  node[below]  {\ } (C1)
     (C1)   edge[dashed,tran,bend right] node[above]  { } (00)

     (C1) edge[tran,bend right]  node[below]  {\hspace{-2mm}$\scriptstyle{[\lambda u]_{n_c\!{-}\!2}\frac{u_1}{{\red N}}}$} (D)
     (D)   edge[dashed,tran,bend right] node[above=1mm]  {\hspace{-8mm} $\scriptstyle{[\mu u]_{n_c\!{-}\! 1}{\red \Phi(\!N\!)}}$} (11)
     
     (D) edge[tran,bend right]  node[below]  {$\scriptstyle{[\lambda u]_{n_c\!{-}\! 1}\frac{u_1}{{\red N}}}$} (E)
     (E)   edge[dashed,tran,bend right] node[above=1mm]  {$\scriptstyle{[\mu u]_{n_c}}$} (22)
     
     (E) edge[tran,bend right]  node[below]  {\hspace{2mm}$\scriptstyle{[\lambda u]_{n_c}\frac{u_1}{{\red N}}}$} (F)
     (F) edge[tran,bend right]  node[below]  {\hspace{5mm} $\scriptstyle{[\lambda u]_{n_c\!{+}\!1}\frac{u_1}{{\red N}}}$} (G)
     (F)   edge[dashed,tran, bend right] node[above=1mm]  {\hspace{-5mm} $\scriptstyle{[\mu u]_{n_c\!{+}\! 1}}$} (33)
     (G)   edge[dashed,tran,bend right] node[above=1mm]  {\hspace{5mm}  $\scriptstyle{[\mu u]_{n_c\!{+}\! 2}}$} (44);

\end{tikzpicture}
  \caption{Transition rates of jumps between coordinates of $(u_k)$ with the notation $[\xi u]_k{=}\xi_ku_k$.  The interactions with the first coordinate and detailed fragmentation components (dashed lines) are not represented.}\label{fig2}
  \end{figure}
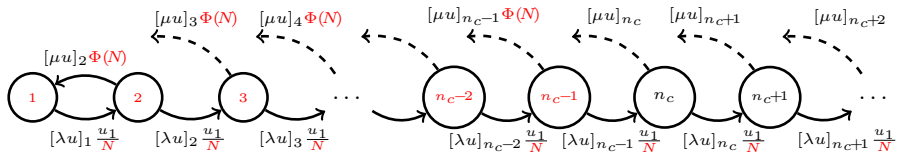
\end{center}

\subsection*{Growth of Polymers} The growth of polymers occurs only through  successive additions of  monomers. The rate at which  a given polymer of size $k$, $k{<}n_c$, is aggregated  with a polymer of size $1$ is taken as proportional  to the concentration $u_1/N$ of these polymers.  See Anderson and Kurtz~\cite{AndersonKurtz}.  Hence the total rate of production of polymers of size $k{+}1$ via this kind of reaction is given by
\[
\lambda_k u_k\frac{u_1}{N},
\]
for some constant $\lambda_k{>}0$.

\subsection*{Fragmentation}\label{FracSubSec} This is where the nucleation phenomenon is introduced in the stochastic model. It is assumed that $n_c{\in}\N$ such that  $n_c{>}2$. For $k{\geq}2$, the fragmentation rate of a polymer of size $k$ is given by
\begin{equation}\label{muas}
  \mu_k^N{=}
  \begin{cases}
    \Phi(N)\mu_k &\text{ if }k{<}n_c,\\
    \mu_{n_c}& \text{ if  } k{\geq}n_c,
  \end{cases}
\end{equation}
where $x{\mapsto}\Phi(x)$ is a positive non-decreasing function on $\R_+$ converging to infinity at infinity and  $\mu_k$, $2{\le}k{\le}n_c$, are positive real numbers.  In the following polymers whose size is greater than $n_c$ will be qualified as {\em stable}  to indicate that their fragmentation rate is not large. Stable polymers are fragmented at a constant rate $\mu_{n_c}$. 

In the literature for chemical reactions of polymerization processes, the ratio of growth rate and fragmentation rate, $\kappa^k_{\rm off}/\kappa^k_{\rm on}$ in our previous notations,  is assumed to be large for $k{<}n_c$ and small otherwise. See Kashchiev~\cite{Kashchiev2}.  In our case the scaling parameter $\Phi(N)$ stresses the difference of the dynamical behavior of polymers with size less than $n_c$.

In this setting, a quite general fragmentation process is considered. A polymer of size $k$  splits at rate $\mu_k^N$  according to a fragmentation distribution $\nu_k$ on the state space ${\cal S}_k$. More precisely, a polymer of size $k$ is broken into $Y^k_i$ polymers with size $i$, $1{\le}i{\le}k{-}1$  where $Y^k{=}(Y^k_1,\ldots,Y^k_{k-1})$ are random variables such that
\[
\E\left(f(Y^k_1,\ldots,Y^k_{k-1})\right)=\int_{{\cal S}_k} f(y_1,\ldots,y_{k-1})\,\nu_k(\diff y).
\]
For $p{<}k$, we denote by $I_p(y)$ the $p$th coordinate of $y{\in}{\cal S}_k$ and
\[
\croc{\nu_k,I_p}\steq{def}\int_{{\cal S}_k} I_p(y)\,\nu_k(\diff y)=\int_{{\cal S}_k} y_p\,\nu_k(\diff y)
\]
is the average number of polymers of size $p$ for $\nu_k$, in particular
\[
\sum_{p=1}^{k-1} p\croc{\nu_k,I_p}=k.
\]
\subsection*{Remark on the Nucleus Size}
If the nucleus size is $2$, in state $u{\in}{\cal S}_N$ stable polymers are created at rate $\lambda_1 u_1^2/N$. Hence, initially, stable polymers are created directly from monomers at a rate a the order of $\lambda_1N$: a significant fraction of monomers is polymerized right away. When $n_c{\ge}3$, it will be seen that, under some conditions,  stable polymers are essentially produced as follows:  a stable polymer grows for some time and then is fragmented into multiple polymers and, with positive probability, the size of several of them may be larger than $n_c$. A stable polymer can create stable polymers with positive probability. It turns out that this production scheme is much faster than the creation of stable polymers by the successive addition of monomers to polymers with size less than $n_c{-}1$. For this reason the case $n_c{=}2$ stands out. It is in fact used in most of the mathematical models of polymerization, implicitly sometimes. In our view,  this is the reason  why it cannot be really  used to explain the large variability observed in the experiments. 

\bigskip
\noindent
{\sc Examples of Fragmentation Measures.}\\
Fragmentation can be seen to a decomposition of integers. An important literature is in fact devoted to this topic: from the point of view of combinatorics as well as for statistical aspects. See, for example, Fristed~\cite{Fristed}, Pitman~\cite{Pitman} or Ercolani et al.~\cite{Ercolani}. We now give some classical examples.

\noindent
Recall that, for $k{\geq}2$,  a polymer of size $k$ is split according to the distribution $\nu_k(\cdot)$.
\begin{enumerate}
\item ${\rm UF}$: Uniform Binary Fragmentation, for $0{<}\ell{<}k$,
    \[
    \nu_k^{{\rm UF}}(e_{\ell}{+}e_{k-\ell})=
    \begin{cases}
      {2}/{(k{-}1)},\quad &  k\ {\rm odd},\\
      {2}/{(k{-}1)}, \quad \ell{\not=}k/2&  k\  {\rm even},\\
      {1}/{(k{-}1)}, \quad \ell{=}k/2&  k\  {\rm even},
    \end{cases}
    \]
    where $e_{\ell}$ is the $\ell$th unit vector of $\N^\N$, $e_\ell$ is representing a single polymer of size $\ell$.
  \item[]
  \item  ${\rm BF}$: Binomial Fragmentation, $p\in(0,1)$, if $k$ is odd and $0{<}\ell{<}k$,
      \[
      \nu_{k}^{\rm BF}(e_{\ell}{+}e_{k-\ell})=\frac{2}{1{-}p^k{-}(1{-}p)^k}\binom{k}{\ell} p^\ell(1{-}p)^{k-\ell}, 
      \]
     If $k$ is even, the case $\ell{=}k/2$ has to be singled out as before. 
\item[]
\item  ${\rm MF}$: Multiple Fragmentations.\\
 For $m{\ge}2$ and $p{=}(p_i){\in}(0,1)^{m}$ with $p_1{+}p_2{+}\cdots{+}p_m{=}1$, a polymer of size $k{\ge}m$ is fragmented into $m$ polymers with sizes $k_1{+}1$,  $k_2{+}1$, \ldots, $k_m{+}1$ according to a multinomial distribution,  if $\ell{=}e_{k_1{+}1}{+}e_{k_2{+}1}{+}\cdots{+}e_{k_m{+}1}{\in}{\cal S}_k$
    \[
      \nu_{k,m}^{{\rm MF}}(\ell)=\sum_{\substack{(n_1,\ldots,n_m){\in}\N^m\\\{n_1,\ldots,n_m\}=\{k_1,\ldots,k_m\}}}\frac{(k{-}m)!}{n_1!n_2!\cdots n_m!}\prod_{i=1}^m p_i^{n_i}. 
      \]
      if $k{<}m$, one set $\nu_{k,m}^{{\rm MF}}(\ell){=}1$ for $\ell{=}k{\cdot}e_1$.
\end{enumerate}
The family of probability distributions to describe fragmentation have various definitions depending on the context investigated.  For continuous fragmentation, i.e.\  when the state space is $\R_+$ instead of $\N$, the corresponding quantity is the {\em dislocation measure}. In the  self-similar case, the fragmentation process is described by a measure associated with the outcome of the breaking of a particle of size $1$. There are generalizations in this continuous setting with fragmentation kernels $K(x,\diff y)$ depending on the initial size $x$ to decompose.  See Bertoin~\cite{Bertoin1} for example.  For fragmentation of integers, as it is our case, Mohamed and Robert~\cite{Mohamed:01} describes the fragmentation also via a single measure, {\em the splitting measure}  for the analysis of first order quantities.  See~\cite{Mohamed:01} for an overview of this literature in this framework.

We introduce the following conditions under which our main results are established.
\begin{assumption}
  \begin{enumerate}
    \item[]
  \item[A{-}1)] Lower Bound for Polymerization and Fragmentation Rates. 
\begin{equation}\label{lambdab}
    \underline{\lambda}\steq{def}\inf_{k{\geq 1}}\lambda_k{>}0 \text{ and }\inf_{k{\le}n_c} \mu_k {>}0.
\end{equation}
\item[A{-}2)]  Scaling function $\Phi$ for Fragmentation . It is assumed that  
\begin{equation}\label{Psi}
  \Psi(N)\steq{def}\frac{1}{N}\Phi(N)^{{n_c}{-}2}
  \quad\text{ is such that }\quad
  \lim_{N\to+\infty} \frac{\Psi(N)}{\log N} =+\infty,  
\end{equation}
and, for any $\eta{\ge}0$, $(N/\Phi(N)^\eta)$ is a converging sequence with a limit in $\R_+{\cup}\{{+}\infty\}$.
We define
\begin{equation}\label{kc}
  k_c\steq{def}\sup\left\{k{\in}\N_+:\lim_{N\to+\infty} \frac{N}{\Phi(N)^{k-1}}={+}\infty\right\},
\end{equation}
note that, by the assumption on $\Psi$, $k_c{\le}n_c{-}2$.

\item[A{-}3)]  Fragmentation of Polymers. There exists $C_0{>}0$ and $K$ such that, for $k{\ge}K$,
\begin{equation}\label{eqA31}
  \nu_k\left( y{=}(y_i){\in}{\cal S}_k{:} \sum_{i<n_c}y_i\leq C_0\right)=1, 
\end{equation}
and  
\begin{equation}\label{eqA32}
\liminf_{k\to+\infty} \nu_k\left( y{=}(y_i){\in}{\cal S}_k{:} \sum_{i\geq n_c}y_i\geq 2\right)>0.
\end{equation}
\item[A{-}4)] Monotonicity Property: If $k{\leq}{k'}$, then, for all $a{\in}\N$ and $\ell{\ge}1$, the relation
\[
\nu_k\left(y{\in}{\cal S}_k{:}  \sum_{i=1}^{+\infty}\ind{y_i\geq \ell } \ge a\right)     \leq \nu_{k'}\left(y{\in}{\cal S}_{k'}{:} \sum_{i=1}^{+\infty}\ind{y_i\geq \ell } \ge a\right)
\]
holds. 
\end{enumerate}
\end{assumption}
The condition on the convergence of $(N/\Phi(N)^\eta)$, $\eta{>}0$, is for technical convenience, mainly in the proof of Proposition~\ref{lem1}.

Assumption~A{-}3 states that, with probability $1$, the fragmentation of a large polymer can give at most $C_0$ unstable polymers, i.e.\ whose size is less than $n_c$. Under this condition, a large polymer cannot  be broken into monomers only. Consequently, the fragmentation of a polymer of size greater than $n_cC_0$ produces at least a polymer of size greater than $n_c$. Furthermore, with a positive probability, two stable polymers are produced from the fragmentation of such  a large  polymer. Note that the fragmentation process of the Becker-D\"oring model~\eqref{ChimBD} does not satisfy this condition. Recall, as mentioned in the introduction, that this mathematical model fails to exhibit an explosive growth behavior.  The monotonicity property~A{-}4 gives that the sizes of fragments are stochastically increasing with respect to the size of the fragmented polymer.

\begin{prop}
The fragmentation measures ${\rm UB}$, ${\rm UF}$ and ${\rm MB}$ satisfy Assumption~A${}^*$.
\end{prop}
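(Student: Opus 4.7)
Since Assumptions~A-1 and A-2 constrain only the rates $(\lambda_k),(\mu_k)$ and the scaling function $\Phi$---none of which are built into the definition of a fragmentation measure---the proposition reduces to checking A-3 and A-4 for each of the three examples UF, BF, MF.

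For A-3, I would treat the two parts separately. The boundedness condition~\eqref{eqA31} is automatic because $\nu_k^{\mathrm{UF}}$ and $\nu_k^{\mathrm{BF}}$ are supported on binary splits (so $C_0{=}2$ works for every $k\ge 2$), and $\nu_{k,m}^{\mathrm{MF}}$ produces exactly $m$ fragments (so $C_0{=}m$ works for $k\ge m$). The asymptotic probability condition~\eqref{eqA32} reduces to a concentration argument: for UF the ordered split is essentially uniform on $\{1,\ldots,k-1\}$ so the probability that both parts exceed $n_c$ equals $(k-2n_c+1)/(k-1)\to 1$; for BF, the conditioned Binomial$(k,p)$ concentrates at $pk$ with $p\in(0,1)$; and for MF a multinomial law of large numbers makes all $m$ pieces exceed $n_c$ with probability tending to one. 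In every case $\liminf_k \nu_k(\sum_{i\ge n_c} y_i \ge 2)=1$, which is much stronger than what~\eqref{eqA32} requires.

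For A-4 the natural route is a coupling argument. For MF, take an i.i.d.\ sequence $(X_i)_{i\ge 1}$ on $\{1,\ldots,m\}$ with law $(p_1,\ldots,p_m)$ and set $k_j(k){=}\#\{1\le i\le k-m:X_i=j\}$; under this coupling each $k_j(k)$ is non-decreasing in $k$, so the whole composition vector of fragment sizes is coordinatewise non-decreasing as $k$ grows and the stated inequality holds pointwise for any $(\ell,a)$, hence in stochastic order. A similar Bernoulli-sequence coupling handles BF: a common source of Bernoulli$(p)$ randomness controls both Binomial$(k,p)$ and Binomial$(k',p)$, and the stochastic order passes to the conditioned splits used in the definition of $\nu_k^{\mathrm{BF}}$. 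The UF case reduces to a direct computation from the explicit form of the measure on the $\lfloor k/2\rfloor$ unordered splits, together with the observation that the uniform weights on the two parts make the counts monotone in $k$.

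I expect the main technical obstacle to be A-4 for the binary measures UF and BF, because the number of parts is fixed at $2$ and so a pure coordinatewise coupling does not directly give the stochastic order on $\sum_i \mathbb{1}_{y_i\ge\ell}$; one is forced to argue via the explicit distribution of the split point. In particular, the equal-split case (which is weighted differently for even $k$ in UF, and also needs a separate treatment in BF) and a few small-$k$ boundary regimes will have to be verified by hand before the argument becomes clean in the asymptotic regime used elsewhere in the paper.
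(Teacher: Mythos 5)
Your strategy is the same as the paper's for the multinomial measure: realize $\nu_k$ as an i.i.d.\ allocation of $k{-}m$ balls into $m$ urns, couple so that the urn counts are coordinatewise non-decreasing in $k$ (A--4), and invoke the law of large numbers for A--3. The paper's proof stops there, simply asserting that the uniform binary case is ``similar'' and never addressing BF, so your additional discussion of the two binary measures goes further than the published argument does. That said, the BF step as you state it has a real gap that your own closing paragraph half-acknowledges: a common Bernoulli source does give $\ell_k \le \ell_{k'}$ for the ``left'' part of the split, but the split is the unordered pair $\{\ell_k, k{-}\ell_k\}$, and along such a coupling one of the two parts can decrease (e.g.\ if $\ell_{k'}{-}\ell_k > k'{-}k$, the complement shrinks). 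What is actually needed, and what your remark about arguing ``via the explicit distribution of the split point'' points toward, is the separate stochastic monotonicity in $k$ of the smaller and the larger part; this is a direct elementary computation for UF and requires a genuine (if routine) argument for BF, not an automatic consequence of the Bernoulli coupling. Finally, note that both you and the paper tacitly read the monotone functional in A--4 as the number of fragments of size at least $\ell$ (i.e.\ $\sum_{i\ge\ell} y_i$), which is the quantity that the coordinatewise coupling controls and that the subsequent Coupling Proposition uses; taken literally, the displayed expression $\sum_i\mathbbm{1}_{\{y_i\ge\ell\}}$ (number of size-classes occupied by at least $\ell$ fragments) is not monotone under the ball coupling (an unequal split can merge into an equal one), so the literal reading would make the claim false even for MF.
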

\begin{proof}
  The proof is done for the multinomial fragmentation, the proof for the symmetrical fragmentation is similar. The fragmentation as the distribution of $k{-}m$ balls into $m$ urns so that the probability  the $i$th urn is chosen  is $p_i$, if $A_i^k$ the number of balls in this urn, then $\nu_k$ is simply the empirical distribution of these variables. A simple coupling can be constructed so that $A_i^k{\le}A_i^{k+1}$ holds for all $i{\in}\{1,\ldots,m\}$. This gives right away the monotonicity property. The law of large numbers gives the convergence in distribution
  \[
  \lim_{k\to+\infty} \frac{1}{k}(A_1^k,\ldots,A_m^k)= (p_1,\ldots,p_m),
  \]
 Condition~A{-}3 holds since $p_i$ is positive for all $i$. 
\end{proof}

\subsection*{The Infinitesimal Generator of the Stochastic Evolution}
We define by $U_k^N(t)$ the number of polymers of size $k{\geq}1$ at time $t{\geq}0$.  Clearly, the process $(U^N(t)){\steq{def}}(U_k^N(t))$ has the Markov property on the countable state space ${\cal S}_N$.  
The generator $\Omega_N$ of this process is  given by, for $u{\in}{\cal S}_N$,
\begin{multline}\label{Gen}
  \Omega_N(f)(u)=  \sum_{k=1}^{+\infty}\lambda_k u_k \frac{u_1}{N} \left[f(u{+}e_{k+1}{-}e_k{-}e_1){-}f(u)\right]\\+
 \sum_{k=2}^{+\infty}  \mu_k^N u_k  \int_{{\cal S}_k} \left[f(u{+}y{-}e_k){-}f(u)\right]\nu_k(\diff y)
\end{multline}
where, for $i{\geq}1$, $e_i$ is the $i$th unit vector of $\N^{\N}$ and $f$ is a function on ${\cal S}_N$ with finite support. 

The growth mechanism involves the scaled term $u_1/N$, the concentration of monomers. This is not the case for the rates of fragmentation which do not depend on the concentration,  in the state $u{=}(u_i){\in}{\cal S}_N$,  one of the polymers of size $k$ is fragmented at rate $\mu_k^N u_k$.
\subsection*{Stochastic Differential Equations}
Throughout the paper, we use the following notations for Poisson  processes. For  $\xi{>}0$,  ${\cal N}_{\xi}$ is a Poisson point process on $\R_+$ with parameter $\xi$ and $({\cal N}_{\xi,i})$ is an i.i.d.\ sequence of such processes.

For  $k{\geq}2$, $\overline{{\cal N}}^k_{\xi}$ is a marked Poisson point process on $\R_+{\times}{\cal S}_k$ with intensity measure $\xi\diff t{\otimes}\nu_k(\diff y)$ and $(\overline{{\cal N}}^k_{\xi,i})$ is an i.i.d.\ sequence of such processes.
Such a process can be represented as follows.  If ${\cal N}_{\xi}{=}(t_n^k)$ is a Poisson process on $\R_+$ with rate $\xi$ and $(Y_n^k)$ is an i.i.d.\ sequence of random variables on ${\cal S}_k$ with distribution $\nu_k$, then $\overline{{\cal N}}^k_{\xi}$ has the same distribution as the point process on $\R_+{\times}{\cal S}_k$ given by 
\[
\sum_{n\geq 1}\delta_{(t_n^k,Y_n^k)},
\]
where $\delta_{(a,b)}$ is the Dirac mass at $(a,b)$.  See Kingman~\cite{Kingman} for an introduction on marked  Poisson point processes. All Poisson processes used are assumed to be independent.

In this context, the Markov process $(U^N(t))$ can also be seen as the stochastic process solution of the system of stochastic differential equations
\begin{multline}\label{SDE}
\diff U_k^N(t)= \sum_{\ell=1}^{U_1^NU_{k-1}^N(t-)}{\cal N}_{{\lambda_{k{-}1}}{/}{N},\ell}(\diff t)+\sum_{m=k+1}^{+\infty} \sum_{\ell=1}^{U_m^N(t-)} \int_{{\cal S}_m} y_k\overline{\cal N}^m_{\mu_m^N,\ell}(\diff t,\diff y)\\
-\sum_{\ell=1}^{U_k^N(t-)}\overline{\cal N}^k_{\mu_k^N,\ell}(\diff t,{\cal S}_k)-\sum_{\ell=1}^{U_1^NU_k^N(t-)}{\cal N}_{{\lambda_{k}}/{N},\ell}(\diff t), \quad k{\geq}2.
\end{multline}
 The evolution of $(U_1^N(t))$,  the process of the number of monomers,  is defined via   the conservation of  mass condition,
\begin{equation}\label{mass}
\sum_{k{\ge}1} kU_k^N\!(t)=\sum_{k{\ge}1} kU_k^N\!(0)=N.
\end{equation}

\medskip
\noindent
{\bf Condition for the Initial State}\\
For any $k{\ge}n_c$, $U_k^N(0){=}0$. For any $\eps{>}0$ and $2{\le}k{\le}k_c$, the relations
\begin{equation}\tag{I}\label{init}
  \begin{cases}
&\displaystyle \lim_{N\to+\infty} \frac{1}{N}U_1^N(0){=}1,\\
&\displaystyle \lim_{N\to+\infty} \frac{\Phi(N)^{k-1-\eps}}{N}U_k^N(0){=}0,  \\
&\displaystyle \lim_{N\to+\infty} \frac{1}{\Phi(N)^{\eps}} \left(U_{k_c+1}^N(0){+}\cdots{+}U_{n_c-1}^N(0)\right){=}0
  \end{cases}
\end{equation}
hold for the convergence in distribution. The function $\Psi$ is defined by Relation~\eqref{Psi} and $k_c$ by Relation~\eqref{kc}.

The motivation for this condition is that
\begin{itemize}
\item The limit result, Proposition~\ref{hitprop}, for the first instant when a stable polymer is created, holds for all these initial states.
\item Condition~(I) is in fact satisfied as long as few stable polymers have been created. This property will play an important role in Section~\ref{TwoSec}.
\end{itemize}
The initial state $(U^N\!(0)){=}(N,0,\ldots,0,\ldots)$ with only monomers, which is the classical initial condition used in the literature,  satisfies this property.  The process $(U^N(t))$ is c\`adl\`ag, i.e.\ right continuous with left limits at every point of $\R_+$, and $\diff U_k^N(t)$ is $U_k^N(t){-}U_k^N(t{-})$, where $f(t{-})$ denotes the left limit of a function $f$ at $t$. 

\subsection*{Nucleation Times}
We can now introduce the lag time of the polymerization process, for $\delta{\in}(0,1)$,
\begin{equation}\label{HitL}
  L_\delta^N=\inf\left\{t\geq 0:  \sum_{k=n_c}^{+\infty}k U_{k}^N(t){\ge} \delta N\right\}.
\end{equation}
This is the main quantity of interest in the paper, the first instant when there is a fraction $\delta$ of the mass (in terms of monomers) used in  stable polymers, i.e.\ whose sizes are greater than $n_c$.
The first nucleation time is defined as 
\begin{equation}\label{Hit1}
  T^N=\inf\{t\geq 0: U_{n_c}^N(t){=}1\},
\end{equation}
it is the first instant when a stable polymer is created.

\section{The First Instant of Nucleation}\label{OneSec}
The purpose of this section is of proving a convergence theorem for the first time when a polymer of size $n_c$ is created when the  initial mass $N$ converges to infinity. This hitting time  is of the order $(\Psi(N))$ defined by Relation~\eqref{Psi}. The proof is quite technical, we outline the general strategy to establish  Proposition~\ref{hitprop} which is the main convergence result.

\begin{enumerate}
\item An auxiliary $n_c$-dimensional  Markov process $(X^N(t))$ describing particles evolving state $1$ and state $n_c{-}1$ is introduced. It corresponds to the original process but truncated at the $(n_c{-}1)$th coordinate and with a different dynamic at node $n_c$.  For this process, the node $n_c$ is a cementary state for the particles.
\item This process is analyzed on a fast time scale  $t{\mapsto}\Psi(N)t$, Proposition~\ref{lem2} shows that the order of magnitude of coordinates with index greater or equal to $2$ are negligible with respect to $N$. A further result, Lemma~\ref{no} states that, still  on this time scale,  ``most'' of particles are in state $1$, in particular ``few'' particles are at node $n_c$.
\item Proposition~\ref{lem2} and Proposition~\ref{lembal} investigate the asymptotic behavior of occupation times of the Markov process $(X^N(t))$ of the form
  \[
  \int_0^{\Psi(N)t} f\left(X^N(s)\right)\,\diff s,
  \]
 where $f(\cdot)$ is a specific family of polynomial functions on $\R^{n_c}$. The key  convergence result of Proposition~\ref{lembal} can be interpreted as a kind of flow balance equation between the nodes $1$,\ldots, $n_c{-}1$  on the fast time scale $t{\mapsto}\Psi(N)t$. It gives, with an additional convenient martingale argument, the convergence in distribution of the sequence of instants  when particles arrive at node $n_c$ to a Poisson process,  this is Theorem~\ref{TheoPois}. The main result of this section, Proposition~\ref{hitprop}, is then a simple consequence of this theorem. The proof of Proposition~\ref{lembal} is quite involved, it uses Proposition~\ref{lem2} and several estimates.  The proof of Proposition~\ref{lem2}  has been put in the appendix to make the section more readable. 
\end{enumerate}

We define the Markov process $(X^N(t)){\steq{def}}(X_k^N(t))$, the solution of the system of stochastic differential equations
\begin{multline}\label{SDE1}
\diff X_k^N(t)= \sum_{\ell=1}^{X_1^NX_{k-1}^N(t-)}{\cal N}_{{\lambda_{k-1}}/{N},\ell}(\diff t)+\sum_{m=k+1}^{n_c-1} \sum_{\ell=1}^{X_m^N(t-)} \int_{{\cal S}_m} y_k\overline{\cal N}^m_{\mu_m\Phi(N),\ell}(\diff t,\diff y)\\
-\sum_{\ell=1}^{X_k^N(t-)}\overline{\cal N}^k_{\mu_k\Phi(N),\ell}(\diff t,{\cal S}_k)-\sum_{\ell=1}^{X_1^NX_k^N(t-)}{\cal N}_{{\lambda_{k}}/{N},\ell}(\diff t), \quad 2{\le}k{\leq}n_c-1,
\end{multline}
and
\begin{equation}\label{SDE1nc}
  \diff X_{n_c}^N(t)= \sum_{\ell=1}^{X_1^NX_{n_c-1}^N(t-)}{\cal N}_{{\lambda_{n_c-1}}/{N},\ell}(\diff t),
\end{equation}
with initial condition $(X^N(0))$ satisfies Condition~\eqref{init}.

As before, the mass conservation condition,
\[
\sum_{k=1}^{n_c}k X_k^N(t)=\sum_{k=1}^{n_c}k X_k^N(0)=N,
\]
on any finite time interval defines the evolution of the first coordinate $(X_1^N(t))$.
By comparing SDE's~\eqref{SDE} and~\eqref{SDE1}, we remark that the process $(X_k^N(t))$ is closely related to the polymerization process. The main difference is that  the $n_c$th coordinate is  a cemetery state.  With a slight abuse, for convenience,  for $1{\le}k{\le}n_c$ and $t{\ge}0$, we will refer to $X_k^N(t)$ as the number of polymers of size $k$ at time $t$. This process is used to investigate the first phase of the polymerization process, until the first nucleus is created. Another auxiliary process will be introduced in section~\ref{TwoSec} to investigate the second phase during which polymers of size greater than $n_c$ have a total mass of the order of $N$.

\medskip

\noindent
{\bf Remark.}\\
A related model (in a quite different context)  with a Becker-D\"oring flavor is analyzed in Sun et al.~\cite{SFR} with a different perspective since the goal is of analyzing the asymptotic behavior of a transient multi-dimensional  Markov process.  Outside the fragmentation feature which does not appear in the models of~\cite{SFR}, the transition rates are linear with respect to the state in  this reference, instead of a quadratic dependence for the present model.  In both cases, several estimates on fast time scales have nevertheless to be derived.

The strategy to derive the limiting behavior of  the distribution of the sequence of processes  $(X_k^N(\Psi(N)t))$, where $\Psi$ is defined by Relation~\eqref{Psi} is explained briefly. One will first prove, via quite technical estimates, that, on the fast time scale $t{\mapsto}\Psi(N)t$, the values of the coordinates with index between $2$ and $n_c{-}1$ are essentially negligible with respect to $N$. See Proposition~\ref{lem1}. The  second key result is the asymptotic balance equations~\eqref{eqbal} of Proposition~\ref{lembal},  giving estimations of occupation measures associated to the coordinates of the process on the time scale $t{\mapsto}\Psi(N)t$.  Theorem~\ref{TheoPois} is the main result of this section which establishes the convergence in distribution of the sequence $(X_{n_c}^N(\Psi(N)t))$ to an homogeneous Poisson process. A limit result for the asymptotic behavior of the first nucleation time $T^N$ defined by Relation~\eqref{Hit1} is derived from this theorem.

The integration of Equations~\eqref{SDE1} gives the following representation, for $1{<}k{<}n_c$,
\begin{multline}\label{SDE2}
X_k^N(t)=X_k^N(0)+\frac{\lambda_{k-1}}{N} \int_0^t X_1^N(s)X_{k-1}^N(s)\,\diff s+\sum_{\ell=k+1}^{n_c-1}  \mu_\ell\Phi(N)  \croc{\nu_\ell,I_k}  \int_0^t  X_\ell^N(s)\,\diff s\\
- \mu_k\Phi(N)\int_0^t  X_k^N(s)\,\diff s-\frac{\lambda_{k}}{N} \int_0^t X_1^N(s)X_{k}^N(s)\,\diff s+M_k^N(t),
\end{multline}
where $(M_k^N(t))$ is a martingale, obtained by the compensation of the Poisson processes of the dynamics. Stochastic calculus, see Section~\ref{mpp} of the Appendix for example,  gives that its previsible increasing process is 
\begin{multline}\label{SDEcroc}
\croc{M_k^N}(t)=
\frac{\lambda_{k-1}}{N} \int_0^t X_1^N(s)X_{k-1}^N(s)\,\diff s+\sum_{\ell=k+1}^{n_c-1}  \mu_\ell\Phi(N)  \croc{\nu_\ell,I_k^2} \int_0^t  X_\ell^N(s)\,\diff s\\
+ \mu_k\Phi(N)\int_0^t  X_k^N(s)\,\diff s+\frac{\lambda_{k}}{N} \int_0^t X_1^N(s)X_{k}^N(s)\,\diff s.
\end{multline}
For $k{=}n_c$, by definition the process $(X_{n_c}^N(t))$ is non-decreasing,
\begin{equation}\label{SDE3}
X_{n_c}^N(t)=\frac{\lambda_{n_c-1}}{N} \int_0^t X_1^N(s)X_{n_c-1}^N(s)\,\diff s+M_{n_c}^N(t),
\end{equation}
and
\begin{equation}\label{SDE3croc}
\croc{M_{n_c}^N}(t)=
\frac{\lambda_{n_c-1}}{N} \int_0^t X_1^N(s)X_{n_c-1}^N(s)\,\diff s.
\end{equation}

\medskip
By looking at the transition rates between the two first coordinates of $(X^N(t))$, one can guess that $(X_2^N(t))$ should be of the order of $N/\Phi(N)$ on the normal time scale at least. The following important proposition shows that, on the fast time scale $t{\mapsto}\Psi(N)\,t$, the coordinates with index $2$, \ldots, $n_c{-}1$ remain negligible with respect to $N$.

\begin{prop}\label{lem1}
If $(X_k^N(t))$ is the solution of the SDE~\eqref{SDE1} with initial state satisfying Condition~\eqref{init} then, for $2{\leq}k{\leq}n_c{-}1$ and any $\eps{>}0$, for the convergence in distribution of processes, the relations
\[
\begin{cases}
\displaystyle\lim_{N\to+\infty}\left(\frac{\Phi(N)^{k-1-\eps}}{N}X_k^N(\Psi(N)t)\right){=}(0),\quad 2{\le}k{\leq}k_c\\
\displaystyle \lim_{N\to+\infty} \left(\frac{1}{\Phi(N)^{\eps}} \left(X_{k_c+1}^N{+}\cdots{+} X_{n_c-1}^N\right)(\Psi(N)t)\right){=}(0).
\end{cases}
\]
hold.
\end{prop}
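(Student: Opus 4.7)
The target bound reflects the quasi-equilibrium picture: at each node $k \in \{2,\ldots,n_c-1\}$, the input rate (polymerization from $k-1$ plus fragmentation from $\ell > k$) is balanced against the very fast self-fragmentation rate $\mu_k \Phi(N)$, pinning $X_k^N$ near the value $N/\Phi(N)^{k-1}$. My plan is to make this heuristic rigorous by combining the integrated SDE~\eqref{SDE2} with a bootstrap on suitably rescaled coordinates, handling the martingale term via a coupling with a strongly ergodic process as indicated in Remark~(iii).

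Apply the integrating factor $e^{\mu_k \Phi(N) s}$ to~\eqref{SDE2}, then time-change by $s \mapsto \Psi(N) s$, to get the variation of constants representation
\[
X_k^N(\Psi(N) t) = e^{-\mu_k \Phi(N) \Psi(N) t}\, X_k^N(0) + \int_0^{\Psi(N) t} e^{-\mu_k \Phi(N)(\Psi(N) t - s)}\, I_k^N(s)\, ds + \widetilde M_k^N(t),
\]
where $I_k^N(s) = \lambda_{k-1} X_1^N(s) X_{k-1}^N(s)/N + \sum_{\ell=k+1}^{n_c-1} \mu_\ell \Phi(N) \croc{\nu_\ell, I_k} X_\ell^N(s)$ collects the inputs and $\widetilde M_k^N$ is the exponential convolution of $M_k^N$. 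Since $\mu_k \Phi(N) \Psi(N) \to +\infty$, the initial term is forgotten instantly under~\eqref{init} and the exponential kernel averages $I_k^N$ over a window of width $1/(\mu_k \Phi(N))$; using $X_1^N \le N$ this gives $X_k^N(\Psi(N) t) \le C\bigl[I_k^N(\cdot)/(\mu_k \Phi(N)) + \|\widetilde M_k^N\|_\infty\bigr]$ for a deterministic constant $C$.

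Set $B_k^N(t) = \Phi(N)^{k-1} X_k^N(\Psi(N) t)/N$. Mass conservation gives $B_1^N(t) \le 1$ and the previous estimate translates into the inequality
\[
B_k^N(t) \le C\left[ \frac{\lambda_{k-1}}{\mu_k} B_{k-1}^N(t) + \sum_{\ell = k+1}^{n_c - 1} \frac{\mu_\ell}{\mu_k} \croc{\nu_\ell, I_k} \Phi(N)^{k-\ell} B_\ell^N(t) \right] + (\text{martingale}).
\]
The off-diagonal coefficients $\Phi(N)^{k-\ell}$ vanish for $\ell > k$, so the system is upper-triangular to leading order: a short bootstrap starting from $B_1^N \le 1$ shows $\max_k B_k^N$ remains bounded in probability, uniformly on compacts of $t$. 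Multiplying by $\Phi(N)^{-\varepsilon}$ yields the first claimed convergence. For the second statement, Assumption~A-2 and the definition~\eqref{kc} of $k_c$ ensure $N/\Phi(N)^{k-1}$ has a finite limit for $k > k_c$, so $X_k^N(\Psi(N) t)$ is itself bounded in probability and the factor $\Phi(N)^{-\varepsilon}$ forces convergence to zero.

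The hard part is the martingale control. The previsible bracket~\eqref{SDEcroc} evaluated at $\Psi(N) t$ is dominated by $\mu_k \Phi(N) \int X_k^N$ and is of order $t\, \Phi(N)^{n_c - k}$ at quasi-equilibrium, so Doob's inequality alone produces fluctuations of order $\Phi(N)^{(n_c-k)/2}$, \emph{too large} to be absorbed by $N/\Phi(N)^{k-1-\varepsilon}$ in the regime $\Psi(N) \to +\infty$. The remedy, as hinted in Remark~(iii), is a pathwise coupling of $(X_k^N(\Psi(N) t))$ with an auxiliary M/M/$\infty$-type queue whose arrival rate is a uniform upper bound on $I_k^N$ and whose service rate is $\mu_k \Phi(N)$: this queue is strongly ergodic with relaxation time $O(1/(\mu_k \Phi(N)))$ and equilibrium mean of the right order, so standard concentration bounds for geometrically ergodic chains bound its path-wise maximum over $[0, \Psi(N) t]$ up to a logarithmic slack. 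That slack is provided exactly by the condition $\Psi(N)/\log N \to +\infty$ in Assumption~A-2. Threading this coupling simultaneously through all coordinates $X_2^N,\ldots,X_{n_c-1}^N$ is what makes the argument technical and justifies its placement in the appendix.
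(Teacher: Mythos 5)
Your overall intuition---quasi-equilibrium at each node, comparison with an $M/M/\infty$-type process, logarithmic slack coming from $\Psi(N)/\log N\to\infty$---is aligned with the paper's strategy, but there are two genuine gaps in the argument as written.

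First, the bootstrap you propose does not actually close. After rescaling to $B_k^N=\Phi(N)^{k-1}X_k^N(\Psi(N)t)/N$ you claim the feedback terms $\Phi(N)^{k-\ell}B_\ell^N$ for $\ell>k$ are negligible, ``so the system is upper-triangular to leading order.'' But the coefficient $\Phi(N)^{k-\ell}$ only kills the term if $B_\ell^N$ is already known to be $O(1)$, which is precisely what you are trying to prove. The a priori bound $X_\ell^N\le N$ only gives $B_\ell^N\le\Phi(N)^{\ell-1}$, and then $\Phi(N)^{k-\ell}B_\ell^N\le\Phi(N)^{k-1}$ is enormous. Bootstrapping upward in $k$ needs $B_\ell^N$ for $\ell>k$ and bootstrapping downward needs $B_{k-1}^N$: the dependence is genuinely circular. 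The paper sidesteps this by not working coordinate by coordinate. It introduces the cumulative quantity $Z_k^N(t)=\sum_{i=k}^{n_c-1}(i-1)X_i^N(t)$, whose jump structure is decoupled: additions of monomers to polymers of sizes $k-1,\ldots,n_c-2$ raise $Z_k^N$ by exactly $1$ at total rate $O(N)$ (or $O(\sum_{i\ge k-1}X_i^N)$ once one has the previous step of the induction), while \emph{every} fragmentation event among sizes $k,\ldots,n_c-1$, and the absorption at $n_c$, strictly decrease $Z_k^N$, at rate $\ge\mu\Phi(N)^{1-\eps}Z_k^N$. There is no term that feeds $Z_k^N$ back into its own up-jump rate, which is exactly what lets the $M/M/\infty$ domination go through in one shot and then gives the bounds on all of $X_k^N,\ldots,X_{n_c-1}^N$ simultaneously. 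With that, induction on $k$ uses the bound on $Z_k^N$ to control the arrival rate of $Z_{k+1}^N$.

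Second, the martingale control is only gestured at. The exponential convolution $\widetilde M_k^N$ you introduce is no longer a martingale, and ``standard concentration bounds for geometrically ergodic chains'' is not a step one can take off the shelf here because the drift of $X_k^N$ depends on all the other coordinates. The paper does not attempt to control the fluctuations of $M_k^N$ (or its exponential smoothing) directly at all. Instead, once the dominating jump process is identified, it quotes an explicit Laplace-transform identity for the hitting times of the $M/M/\infty$ queue (Relation~(6.13) of Proposition~6.9 in Robert's book), expands it in Gamma functions, and shows $D_N(a/(\delta_N\Psi(N)))\to\infty$, which means the queue does not reach level $\sim xN/\Phi(N)^{1-\eps}$ on the time scale $\Psi(N)$. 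This replaces any martingale estimate entirely; the case $\beta_N^{(2)}\to 0$ is handled separately via Proposition~6.10 of the same reference. Your proposal would need an equally explicit, uniform-in-time bound for the sup of the convolution over $[0,\Psi(N)T]$, and would still face the circular-dependence problem in the arrival rate. As it stands the proof outline is incomplete at both of these points.
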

\begin{proof}
We fix $\eps{\in}(0,1/2)$, define ${{\lambda}}{\steq{def}}\max(\lambda_k,1{\leq}k{<}n_c)$,  ${\mu}{\steq{def}}\min(\mu_i/(i{-}1),2{\leq}i{\leq}n_c{-}1)$ and, for $k{=}2,\dots,n_c{-}1$,
\[
  \left(Z_k^N(t)\right)\steq{def}\left(\sum_{i=k}^{n_c-1} (i{-}1)X_i^N(t)\right).
\]
We start with the case $k{=}2$ process $(Z_2^N(t))$. Definition~\eqref{Gen} of the infinitesimal generator gives that, when  $X^N(0){=}x{\in}{\cal S}_N$, this process has positive jumps of size~$1$ only. Remember that a polymer of size $n_c$ is in a cemetery state from the point of view of the dynamic of the process  $(X^N(t))$. They occur  at rate
\[
\frac{x_1}{N} \sum_{i=1}^{n_c-2} \lambda_i x_i\leq \alpha_N^{(2)}\steq{def}\lambda N.
\]
Negative jumps of $(Z_2^N(t))$ are at least  less than ${-}1$ and occur at rate
\[
\frac{\lambda_{n_c-1}}{N}x_{n_c-1}x_1+\sum_{i=2}^{n_c-1} \Phi(N)\frac{\mu_i}{i{-}1} (i{-}1) x_{i}\geq {\delta_N} Z_2^N(t),
\]
with ${\delta_N}{\steq{def}}\mu\Phi(N)^{1-\eps}$.

With a simple coupling,  one can therefore construct   a birth and death process process  $(L_2^N(t))$, corresponding to an $M/M/\infty$ queue with arrival rate $\alpha_N^{(2)}$ and service rate $\delta_N$,  see Chapter~6 of Robert~\cite{Robert}, with $L_2^N(0){=}Z_2^N(0)$ and  such that  the relation $Z_2^N(t){\leq}L_2^N(t)$ holds for all $t{\geq}0$.  Let, for $n{\ge}0$,
\[
T_n^N=\inf\{s>0: L_2^N(s){\ge}n\},
\]
Relation~(6.13) of Proposition~6.9 in Robert~\cite{Robert} gives an expression for the Laplace transform of $T_n$, for $n{\in}\N$ and $\xi{\ge}0$,
\begin{multline}\label{expx1}
\E\left(e^{-{\delta_N}\xi T_n^N}\ind{Z_2^N(0)<n}\right)=\int_0^{+\infty} \E\left((1{+}r/\beta_N^{(2)})^{Z_2^N(0)}\ind{Z_2^N(0)<n}\right)r^{\xi-1}e^{-r}\,\diff r\\\left/\int_0^{+\infty} (1{+}r/\beta_N^{(2)})^nr^{\xi-1}e^{-r}\,\diff r,\right.
\end{multline}
with
\[
 {\beta_N^{(2)}}\steq{def}\frac{\alpha_N^{(2)}}{\delta_N}=\frac{\lambda}{\mu}\frac{N}{\Phi(N)^{1-\eps}}.
  \]
We assume for the moment that the limit of $(\beta_N^{(2)})$ is not $0$, in particular $k_c$ defined by Relation~\eqref{kc} is greater than $2$. See Assumption~A{-}2.

If  $\Gamma(\cdot)$ is the classical Gamma function,
\[
\Gamma(\xi)=\int_0^{+\infty}r^{\xi-1}e^{-r}\,\diff r, \quad \xi{>}0,
\]
by expanding the integrand of the Laplace transform by using the classical relation $\Gamma(m{+}1){=}m\Gamma(m)$, we get that
\begin{align}\label{expxi}
D_N(\xi){\steq{def}}\frac{1}{\Gamma(\xi)}\int_0^{+\infty}& (1{+}r/\beta_N^{(2)})^nr^{\xi-1}e^{-r}\,\diff r\\ &=\sum_{k=0}^n \binom{n}{k} \frac{1}{(\beta_N^{(2)})^k} \frac{\Gamma(\xi{+}k)}{\Gamma(\xi)}
=1+\sum_{k=1}^n \binom{n}{k} \frac{1}{(\beta_N^{(2)})^k} \prod_{i=0}^{k-1}(i{+}\xi)\notag\\
&=1+\xi \sum_{k=1}^n\frac{n!}{k(n{-}k)!}\frac{1}{(\beta_N^{(2)})^k} \prod_{i=1}^{k-1}(1{+}\xi/i).\notag
\end{align}
For $x{>}0$, we fix  $n{=}2m_N$  with $m_N{=}\lceil x\beta_N^{(2)}\Phi(N)^{\eps}\rceil{=}\lceil x(\lambda/\mu)N/\Phi(N)^{1{-}\eps}\rceil$ and denote
\[
U_N\steq{def}\frac{1}{\mu\Phi(N)^{1-\eps}\Psi(N)}\frac{(2m_N)!}{m_N{\cdot}m_N!}\frac{1}{(\beta_N^{(2)})^{m_N}},
\]
then, for $ a{\ge} 0$,  Equation~\eqref{expxi} gives $D_N(a/(\delta_N\Psi(N))){\ge}1{+}a U_N$.
Since
\begin{multline*}
  U_N\ge\frac{1}{\mu\Phi(N)^{1-\eps}\Psi(N)\beta_N^{(2)}}\frac{(2m_N)!}{m_N!}\frac{(x\Phi(N)^\eps)^{m_N-1}}{m_N^{m_N}}\\
  =  \frac{1}{\lambda\Phi(N)^{n_c-2}}\frac{(2m_N)!}{m_N!}\frac{(x\Phi(N)^\eps)^{m_N-1}}{m_N^{m_N}}
  \ge  \frac{1}{\lambda}x^{m_N-1}\Phi(N)^{\eps(m_N{-}1){-}n_c{+}2},
\end{multline*}
due to the assumption of $(\beta_N^{(2)})$, the sequence $(m_N)$ is converging to infinity, and the same property holds for $(U_N)$. 
We have proved that $(D_N(a/(\delta_N\Phi(N))))$ converges to infinity.

Condition~\eqref{init}  for the initial state  gives that, for any $\eta{>}0$ arbitrarily small,  for the convergence in distribution,
\[
\lim_{N\to+\infty} \frac{\Phi(N)^{1-\eta}}{N}Z_2^N(0)=0, \text{ so that }\lim_{N\to+\infty} \frac{\Phi(N)^{\eps/2}}{\beta^{(2)}_N}Z_2^N(0)=0.
\]
For $\xi{\in}\R_+$, by using Lebesgue's Theorem, we have the relation
\[
\lim_{N\to+\infty} \frac{1}{\Gamma(\xi)}\int_0^{+\infty} \E\left((1{+}x/\beta_N^{(2)})^{Z_2^N(0)}\ind{Z_2^N(0)<\beta_N^{(2)}/\Phi(N)^{\eps/2}}\right)x^{\xi-1}e^{-x}\,\diff x=1. %
\]
Relations~\eqref{expx1} and~\eqref{expxi} give that, for any $a{>}0$ and $x{>}0$,
\[
\lim_{N\to+\infty} \E\left(e^{-a{T_n^N}/{\Psi(N)}}\ind{Z_2^N(0)<\beta_N^{(2)}/\Phi(N)^{\eps/2}}\right)\le
\lim_{N\to+\infty} 1\!\!\left/\!\!D_N\!\!\left(\frac{a}{\delta_N\Phi(N)}\right)\right.{=}0.
\]
and
\begin{multline*}
\limsup_{N\to+\infty} \E\left(e^{-a{T_n^N}/{\Psi(N)}}\ind{n>Z_2^N(0)>\beta_N^{(2)}/\Phi(N)^{\eps/2}}\right)\\ \leq
\limsup_{N\to+\infty} \P\left(Z_2^N(0)>\beta_N^{(2)}/\Phi(N)^{\eps/2}\right){=}0,
\end{multline*}
which gives the relation
\[
\lim_{N\to+\infty}\E\left(\exp\left(-a \frac{T_{ xN{/}\Phi(N)^{1{-}\eps}}^N}{\Psi(N)}\right)\right)=0.
\]
Hence, for any $t{\ge}0$, the sequence $(\P(T_{ xN{/}\Phi(N)^{1{-}\eps}}^N{\leq} t\Psi(N)))$ is converging to $0$, equivalently, for $x{>}0$,
\begin{equation}\label{eqMM}
\lim_{N\to+\infty}\P\left(\frac{\Phi(N)^{1-\eps}}{N}\sup_{0\leq s\leq t\Psi(N)}L_2^N(s)\geq x  \right)=0.
\end{equation}
The coupling relation between $(L_2^N(t))$ and $(Z_2^N(t))$ gives that the last relation also holds  when $(L_2^N(t))$ is replaced by the process $(Z_2^N(t))$.

Assume now that  the limit of the sequence $(\beta_N^{(2)}){=}((\lambda/\mu) N/\Phi(N)^{1-\eps})$ is $0$. There exists some constant $C$ such that $\delta_N{\geq} CN$, for all $N{\geq}1$. Then, with a simple coupling, there exists an $M/M/\infty$ queue $(\widetilde{L}_2^N(t))$ with arrival rate $\lambda$ and service rate $C$ such that $L_2^N(t){\le}\widetilde{L}_2^N(N t)$ holds for all $t{\ge}0$.  Proposition~6.10 of Robert~\cite{Robert} shows that, for any $\eta{>}0$ and $x{>}0$,
\[
\lim_{N\to+\infty}\P\left(\frac{1}{\Phi(N)^\eta}\sup_{s\leq t\Phi(N)^{n_c-2}}\widetilde{L}_2^N(s)\geq x\right)=0.
\]
By using again the coupling, this relation also holds for $(Z_2^N(t))$. This completes the proof of our assertion for $k{=}2$.

Assume that the proposition holds up to $k{<}n_c{-}1$.  The process $(Z_{k+1}^N(t))$  has also positive jumps of size~$1$ only, occurring at rate
\[
\frac{x_1}{N} \sum_{i=k}^{n_c-2} \lambda_i x_i\leq\lambda \sum_{i=k}^{n_c-2}  x_i,
\]
when the process $(X^N(t))$ is in state $x{\in}{\cal S}_N$.

Assume by induction that the convergence holds for $k{<}n_c{-}1$. If $k{\le}k_c$, we therefore have that, for any $\eps{\in}(0,1/2)$ and $T{>}0$,  the relation
\[
\sup_{0\leq t\leq \Psi(N) T} \sum_{i=k}^{n_c-2}  X_i^N(t)\leq \frac{N}{\Phi(N)^{k-1-2\eps}}
\]
holds  with high probability for $N$ sufficiently large. As before, we introduce an $M/M/\infty$ process $(L_{k+1}^N(t))$ starting at $Z_{k+1}^N(0)$  with the service rate $\delta_N$ defined above,  and with the arrival rate $\alpha_N^{(k+1)}$  given by  $\lambda N/\Phi(N)^{k-1-2\eps}$, such that for any $\eta{>}0$, there exists some $N_0$ for which the relation
\[
\P\left(Z_{k+1}^N(t){\leq} L_{k+1}^N(t), \forall t{\in}(0,\Psi(N)T)\right)\geq 1{-}\eta
\]
holds for all $N{\ge}N_0$. Due to condition~\eqref{init}  for the initial state,  $(Z_{k+1}^N(0)/\beta_N^{(k+1)})$ converges in distribution to $0$, where $\beta_N^{(k+1)}{=}\alpha_N^{(k+1)}\!\!/\delta_N{=}(\lambda/\mu) N/\Phi(N)^{k-3\eps}$. We can now proceed with the same method  for $(L_{k+1}^N(t))$ as we did with $(L_2^N(t))$. The case $k{>}k_c$ is similar. The proposition is proved. \end{proof}

\begin{prop}\label{lem2}
For $n_c{\geq}r{\geq}2$, $1{\leq}k{\leq}((r{-}1){\land} (n_c{-}2))$ and $2{\leq}h{\leq}n_c{-}1$ then, for the convergence in distribution of continuous processes,
\begin{align}
\lim_{N\to\infty}\left(\frac{1}{N\Phi(N)^{r-2}}\int_0^{\Psi(N) t}X_{{n_c}-k}^N(u)X_h^N(u)\diff u\right){=}(0),\label{relation1}\\
\lim_{N\to\infty}\left(\frac{1}{N\Phi(N)^{r-1}}\int_0^{\Psi(N) t}X_{{n_c}-k}^N(u)X_1^N(u)\diff u\right){=}(0). \label{relation2}
\end{align}
\end{prop}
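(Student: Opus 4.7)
The strategy is to first establish the intermediate bound
\[ \int_0^{\Psi(N)t} X_j^N(u)\,\diff u \leq C_j \Phi(N)^{n_c-1-j}\,t, \qquad j = 2, \ldots, n_c-1, \]
in probability as $N \to +\infty$, from which both relations of Proposition~\ref{lem2} follow. For~\eqref{relation2}, mass conservation $X_1^N \leq N$ yields
\[ \frac{1}{N\Phi(N)^{r-1}} \int_0^{\Psi(N)t} X_1^N X_{n_c-k}^N\,\diff u \leq \frac{1}{\Phi(N)^{r-1}} \int_0^{\Psi(N)t} X_{n_c-k}^N\,\diff u \leq C \Phi(N)^{k-r}\,t, \]
which vanishes since $k \leq r-1$. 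For~\eqref{relation1}, Proposition~\ref{lem1} gives the pointwise bound $X_h^N \leq C_\eps N/\Phi(N)^{h-1-\eps}$ when $h \leq k_c$ and $X_h^N \leq C_\eps \Phi(N)^\eps$ when $h > k_c$; combined with the intermediate bound this yields powers of $\Phi(N)$ or $N$ that are strictly negative under the constraints $k \leq r-1$ and $h \geq 2$.

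The intermediate bound is proved by downward induction on $j$. Integrating equation~\eqref{SDE2} on $[0, \Psi(N)t]$, isolating $\mu_j \Phi(N) \int X_j^N\,\diff u$ on the left, and using $X_1^N \leq N$ to majorize $(1/N)\int X_1^N X_{j-1}^N\,\diff u \leq \int X_{j-1}^N\,\diff u$, yields the recursion
\[ \int_0^{\Psi(N)t} X_j^N\,\diff u \leq \frac{X_j^N(0) + X_j^N(\Psi(N)t) + |M_j^N(\Psi(N)t)|}{\mu_j\Phi(N)} + \frac{\lambda_{j-1}}{\mu_j\Phi(N)}\int_0^{\Psi(N)t} X_{j-1}^N\,\diff u + \frac{1}{\mu_j}\sum_{\ell=j+1}^{n_c-1}\mu_\ell\croc{\nu_\ell,I_j}\int_0^{\Psi(N)t} X_\ell^N\,\diff u. \]
Starting from $j = n_c - 1$, where the $\ell$-sum is empty, and iterating down to $j = 2$, the boundary terms $X_j^N(0)$ and $X_j^N(\Psi(N)t)$ are controlled by~\eqref{init} and Proposition~\ref{lem1} respectively, while the base case at $j = 2$ uses $(1/N)\int_0^{\Psi(N)t}(X_1^N)^2\,\diff u \leq N \Psi(N) t = \Phi(N)^{n_c-2} t$. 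Each step of the recursion contributes a factor $\Phi(N)^{-1}$, producing the target exponent $n_c-1-j$.

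The martingale terms are controlled via Doob's $L^2$ inequality and the bracket formula~\eqref{SDEcroc}: the target intermediate bound implies $\croc{M_j^N}(\Psi(N)t)$ is of order $\Phi(N)^{n_c-j}$, so $|M_j^N(\Psi(N)t)|$ is of order $\Phi(N)^{(n_c-j)/2}$, which is negligible relative to the deterministic terms of order $\Phi(N)^{n_c-j}$ and hence remains dominated after division by $\mu_j\Phi(N)$. The main technical obstacle is the $\ell$-sum on the right-hand side of the recursion, which does not gain a $\Phi(N)^{-1}$ factor; this forces the induction to proceed from the top $j = n_c - 1$ downward, so that at each step the $\int X_\ell^N\,\diff u$ for $\ell > j$ can be replaced by the previously established bound $C_\ell\Phi(N)^{n_c-1-\ell}t$, and the proof goes through cleanly for $N$ sufficiently large.
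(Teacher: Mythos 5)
Your plan rests on the intermediate bound $\int_0^{\Psi(N)t} X_j^N\,\diff u \lesssim \Phi(N)^{n_c-1-j}\,t$, which is indeed of the correct order and, combined with the pointwise estimates from Proposition~\ref{lem1}, would deliver both relations (the final reduction you sketch is sound). However, the proof of this bound as described does not close. The recursion you write contains \emph{both} a lower-index term $\Phi(N)^{-1}\int X_{j-1}^N$ \emph{and} the $\ell$-sum over higher indices $\ell>j$. Starting ``from the top'' at $j=n_c-1$ kills the $\ell$-sum, but the inequality for $\int X_{n_c-1}^N$ then involves $\int X_{n_c-2}^N$, which the downward pass has not yet produced; if you majorize it crudely by $\int X_{n_c-2}^N\le N\Psi(N)t=\Phi(N)^{n_c-2}t$ you land at $\Phi(N)^{n_c-3}t$, which is far larger than the target $\Phi(N)^{0}t$ once $n_c>3$. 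Going up from $j=2$ instead leaves the $\ell$-sum uncontrolled. In short, the system is genuinely coupled in both directions and a single-pass induction on $j$ does not terminate. A second, independent circularity sits in the martingale control: you bound $\croc{M_j^N}(\Psi(N)t)$ ``by the target intermediate bound,'' which is precisely what is being proved; some explicit bootstrap (e.g.\ a coarse a priori bound followed by successive improvement, or a fixed-point inequality for the vector of occupation integrals) would be needed, and none is supplied.

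The paper takes a different route that sidesteps both difficulties. Rather than bound the occupation integrals $\int X_j^N$ in isolation, it works directly with the product processes $X_{n_c-k}^N X_h^N$ and their stochastic-integral expansions (the system of identities in Section~\ref{SId}), and it inducts not on the polymer index $j$ but on the scaling exponent $r$, descending from $r=n_c$ to $r=2$. The base case $r=n_c$ is immediate from Proposition~\ref{lem1} (both factors are $o(N)$ and the normalization is $N\Phi(N)^{n_c-2}=N^2\Psi(N)$). Each subsequent level $r=\ell$ uses only the already-established level $r=\ell+1$ to control both the drift integrals and the martingale brackets appearing in the SDE for $X_{n_c-k}^N X_h^N$; because the bracket estimate at level $\ell$ calls on Relations~\eqref{relation1}--\eqref{relation2} at level $\ell+1$, there is no circularity. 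Within each $r$-level the paper additionally performs a finite induction on $k$ (from $1$ upward) and on $h$ (from $n_c-1$ downward), which is well-founded since the SDE for the product at $(k,h)$ involves only products already handled. Your approach, if made rigorous, would be a more unified statement, but as written the induction structure is not well-ordered and the martingale estimate is assumed rather than proved.
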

\begin{proof}
See Appendix. 
\end{proof}
We are now ready to establish one of the key technical results used in the proof of the main convergence theorem of this section. 
\begin{prop}[Balance Equations]\label{lembal}
  For $k{=}1$,\dots, $n_c{-}2$, if
\begin{multline*}
\left(\Delta_k^N(t)\right)\steq{def}\left({\frac{\lambda_{n_c-k-1}}{N}}\int_{0}^{t}X^N_1(u)^{k+1}X^N_{n_c-k-1}(u)\,\diff u\right.\\\left.{-}\mu_{n_c-k}\Phi(N) \int_{0}^{t}X^N_1(u)^k X_{n_c-k}^N(u)\,\diff u\right)
\end{multline*}
then, for the convergence in distribution,
\begin{equation}\label{eqbal}
\lim_{N\to+\infty}\left(\frac{1}{(N\Phi(N))^k}\Delta_k^N(\Psi(N)t)\right)=(0).
\end{equation}
\end{prop}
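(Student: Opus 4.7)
The plan is to start from the SDE~\eqref{SDE2} at coordinate $j{=}n_c{-}k$ and reintroduce the weight $X_1^N(s{-})^k$ via a stochastic integration by parts. The drift of $X_{n_c-k}^N$ splits as the ``unweighted balance'' $\frac{\lambda_{n_c-k-1}}{N}X_1^N X_{n_c-k-1}^N{-}\mu_{n_c-k}\Phi(N)X_{n_c-k}^N$ (which is exactly the integrand of $\Delta_k^N$ without the factor $X_1^N(s)^k$) plus a fragmentation-input term $R_1^N(s){\steq{def}}\sum_{\ell=n_c-k+1}^{n_c-1}\mu_\ell\Phi(N)\croc{\nu_\ell,I_{n_c-k}}X_\ell^N(s)$ and a negative outflow term ${-}\lambda_{n_c-k}X_1^N(s)X_{n_c-k}^N(s)/N$. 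Writing $R^N(s){\steq{def}}R_1^N(s){-}\lambda_{n_c-k}X_1^N(s)X_{n_c-k}^N(s)/N$, multiplying the balance by $X_1^N(s{-})^k$ and integrating yield
\begin{equation*}
\Delta_k^N(t)=\int_0^t X_1^N(s{-})^k\diff X_{n_c-k}^N(s)-\int_0^t X_1^N(s)^k R^N(s)\diff s-\int_0^t X_1^N(s{-})^k\diff M_{n_c-k}^N(s).
\end{equation*}

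Applying stochastic integration by parts to the semimartingale product $X_1^N(\cdot)^k X_{n_c-k}^N(\cdot)$ rewrites the first integral as the increment $X_1^N(t)^k X_{n_c-k}^N(t){-}X_1^N(0)^k X_{n_c-k}^N(0)$ minus $\int_0^t X_{n_c-k}^N(s{-})\diff(X_1^N(s)^k)$ and the quadratic covariation $[X_1^{N,k},X_{n_c-k}^N](t)$. At $t{=}\Psi(N)T$, using $X_1^N{\le}N$, the boundary terms are at most $N^k X_{n_c-k}^N$; division by $(N\Phi(N))^k$ produces $X_{n_c-k}^N/\Phi(N)^k$, which is $o(1)$ by Proposition~\ref{lem1} since $k_c{\le}n_c{-}2$ forces $N/\Phi(N)^{n_c-1-\eps}{\to}0$ for small $\eps$, while the initial contribution is handled by Condition~\eqref{init}. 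Each summand inside $\int_0^t X_1^N(s)^k R^N(s)\diff s$ is bounded, again using $X_1^N{\le}N$, by $N^k$ times an integral of the form $\Phi(N)\int X_\ell^N\diff s$ with $\ell{>}n_c{-}k$ or $\int X_1^N X_{n_c-k}^N/N\diff s$, both of which are $o(\Phi(N)^k)$ by Propositions~\ref{lem1} and~\ref{lem2}. The stochastic integral against $\diff M_{n_c-k}^N$ is controlled by Doob's inequality applied to its bracket $\int_0^t X_1^N(s{-})^{2k}\diff\croc{M_{n_c-k}^N}(s)$, combining~\eqref{SDEcroc}, $X_1^N{\le}N$, and Proposition~\ref{lem2}.

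The main technical difficulty concerns the cross term $\int_0^t X_{n_c-k}^N(s{-})\diff(X_1^N(s)^k)$ and the quadratic covariation $[X_1^{N,k},X_{n_c-k}^N](t)$. One expands $\diff(X_1^N(s)^k){=}k X_1^N(s{-})^{k-1}\diff X_1^N(s){+}J_N(s)$ by the jump It\^o formula; since $|\Delta X_1^N|{\le}n_c{-}1$ at every transition, the correction $J_N$ is of order $X_1^N(s{-})^{k-2}$ per jump and is absorbed after integration. The SDE for $\diff X_1^N$ obtained from mass conservation~\eqref{mass} (equivalently, from the first-coordinate analogue of~\eqref{SDE1}) decomposes $\diff X_1^N$ as a linear combination of Poisson differentials whose compensators produce drift integrals of exactly the form already handled by Proposition~\ref{lem2}, while the martingale parts are dominated by their brackets, again controlled via Propositions~\ref{lem1} and~\ref{lem2}. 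The covariation $[X_1^{N,k},X_{n_c-k}^N]$ is supported on the growth events at sizes $n_c{-}k{-}1$ and $n_c{-}k$ (the only transitions moving both coordinates simultaneously); its compensator is, up to a constant, $\int_0^t X_1^N(s{-})^{k-1}(\lambda_{n_c-k-1}X_1^N X_{n_c-k-1}^N{+}\lambda_{n_c-k}X_1^N X_{n_c-k}^N)/N\,\diff s$, which Proposition~\ref{lem2} renders $o((N\Phi(N))^k)$. Combining all the estimates yields the claimed convergence.
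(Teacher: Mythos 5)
Your approach is essentially the same as the paper's: decompose $X_1^{N,k}X_{n_c-k}^N$ as a semimartingale, extract $\Delta_k^N$ as the leading drift piece, and show every other piece vanishes at scale $(N\Phi(N))^k$ on the time horizon $\Psi(N)t$ using Propositions~\ref{lem1} and~\ref{lem2} and a Doob/bracket argument for the martingale. The paper writes the decomposition directly from the generator (yielding the six drift terms $A_1^N,\dots,A_6^N$), while you reach the same object via integration by parts on $\int X_1^{N,k}(s^-)\,\diff X_{n_c-k}^N(s)$; these are equivalent bookkeepings.

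There is, however, one concrete inaccuracy in your treatment of the quadratic covariation. You assert that $[X_1^{N,k},X_{n_c-k}^N]$ is supported on the growth events at sizes $n_c{-}k{-}1$ and $n_c{-}k$, "the only transitions moving both coordinates simultaneously." That is not true: every fragmentation of a polymer of size $j{\ge}n_c{-}k$ moves both coordinates at the same instant (it decreases $X_j^N$, changes $X_{n_c-k}^N$ by $-1$ if $j{=}n_c{-}k$ or by $+y_{n_c-k}$ if $j{>}n_c{-}k$, and simultaneously increases $X_1^N$ by $y_1$). So the compensator of $[X_1^{N,k},X_{n_c-k}^N]$ has additional terms of the form $\mu_j\Phi(N)\int_0^t X_1^N(s)^{k-1}\,\E_{\nu_j}[y_1(\cdots)]\,X_j^N(s)\,\diff s$ for $j{\ge}n_c{-}k$. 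In the paper's decomposition these contributions are exactly the pieces housed in $A_2^N$ and $A_6^N$. The omission does not doom your proof — these extra integrals are still of the same type and are rendered negligible by Proposition~\ref{lem2} — but the claim as written is wrong, and you must include and bound them before you can conclude. A lesser issue is that the martingale bracket you invoke, $\int_0^t X_1^N(s^-)^{2k}\,\diff\croc{M_{n_c-k}^N}(s)$, is not quite the bracket of $\int X_1^{N,k}(s^-)\,\diff M_{n_c-k}^N(s)$ when you have already routed part of the jumps into the covariation term; be careful that, after integration by parts, the martingale remainder is what you think it is. The paper avoids both of these traps by working with the generator on the product and grouping all simultaneous jumps from the outset.
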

\begin{proof}
  By a careful use of the   SDE~\eqref{SDE1}, one gets that, for $t{\ge}0$ and $m{=}n_c{-}k$, where $1{\leq}k{\leq}n_c{-}2$
\begin{align*}
&X_1^N(t)^k X_{m}^N(t)={M}_{(k)}^N(t)\notag\\
&{+}{\frac{\lambda_{m-1}}{N}}\int_{0}^{t}X^N_1(u)^{k+1}X^N_{m-1}(u)\diff u
-\mu_m\Phi(N) \int_{0}^{t}X^N_1(u)^kX_{m}^N(u)\,\diff u \textcolor{blue}{\bigg\} \Delta_k^N(t)}\notag\\
&{+}{\frac{\lambda_{m-1}}{N}}\hspace{-1mm}\int_{0}^{t}\hspace{-1mm}\left(\left(X^N_1(u){-}1-\ind{m=1}\right)^{\mathclap k}{-}X^N_1(u)^k\right)\left(X^N_{m}(u){+}1\right)X^N_1(u)X^N_{m-1}(u)\diff u  \textcolor{blue}{\bigg\}A_1^N(t)}\notag\\
& {+}\mu_m\Phi(N) \int_{0}^{t}\int_{\cal{S}_{m}}\hspace{-2mm}\left(\left(X^N_1(u){+}y_1\right)^{\mathclap k}{-}X^N_1(u)^k\right)\left(X^N_{m}(u){-}1\right)\nu_{m}(\diff y) X_{m}^N(u)\,\diff u  \textcolor{blue}{\bigg\}A_2^N(t)}\notag\\
& {+}\sum_{i\neq m-1,m}\frac{\lambda_i}{N}\int_0^{t}\left(\left(X_1^N(u){-}1{-}\ind{i=1}\right)^{\mathclap k}{-}X_1^N(u)^k\right)X_{m}^N(u)X_1^N(u)X_i^N(u)\diff u \textcolor{blue}{\bigg\}A_3^N(t)}\notag\\
&{+}{\frac{\lambda_{m}}{N}}\int_{0}^{t}\left(\left(X^N_1(u){-}1\right)^{\mathclap k}\left(X^N_{m}(u){-}1\right){-}X^N_1(u)^kX^N_{m}(u)\right)X^N_1(u)X^N_{m}(u)\diff u \textcolor{blue}{\bigg\}A_4^N(t)}\notag\\
&{+}\sum_{i=2}^{m-1}\mu_i\Phi(N)\int_{0}^{t}\int_{\cal{S}_i}\left(\left(X^N_1(u){+}y_1\right)^{\mathclap k}{-}X^N_1(u)^k\right)X^N_{m}(u)X_i^N(u)\,\nu_{i}(\diff y) \diff u  \textcolor{blue}{\bigg\}A_5^N(t)}\notag \\
&{+}\hspace{-3mm}\sum_{i=m+1}^{{n_c}-1}\mu_i\Phi(N)\notag \\ &\hspace{5mm} \times \int_{0}^{t}\int_{{\cal S}_i}\hspace{-1mm}\left(\left(X^N_1(u){+}y_1\right)^{\mathclap k}\left(X^N_{m}(u){+}y_{m}\right){-}X^N_1(u)^kX^N_{m}(u)\hspace{-1mm}\right)\nu_{i}(\diff y) X_i^N(u)\diff u. \textcolor{blue}{\bigg\}A_6^N(t)}.\notag
\end{align*}
The terms $A_i^N(t)$, $i{=}1$,\ldots,$6$, are associated to the compensators of the following transitions, for $x{\in}{\cal S}_N$, 
\begin{itemize}
\item $A_1^N(t)$: it is for the transition $x{\to}x{-}e_1{-}e_{m-1}{+}e_m$ and with the first term of $\Delta_k^N(t)$
  \[
  {\frac{\lambda_{m-1}}{N}}\int_{0}^{t}X^N_1(u)^{k+1}X^N_{m-1}(u)\diff u
  \]
substracted. 
\item $A_2^N(t)$: transition when the fragmentation of an element of size $m$ gives $y_1$ polymers of size $1$ and with the second term of $\Delta_k^N(t)$
  \[
  -\mu_m\Phi(N) \int_{0}^{t}X^N_1(u)^kX_{m}^N(u)\,\diff u
  \]
substracted.
\item $A_3^N(t)$: transition $x{\to}x{-}e_1{-}e_{i}{+}e_{i+1}$ for $i{\not}{\in}\{m{-}1,m\}$.
\item $A_4^N(t)$: $x{\to}x{-}e_1{-}e_m{+}e_{m+1}$.
\item $A_5^N(t)$: the fragmentation of a polymer of size $2{\le}i{<}m$ gives $y_1$ polymers of size~$1$. 
\item $A_6^N(t)$: the fragmentation of a polymer of size $i{>}m$ gives $y_1$ polymers of size~$1$ and $y_m$ polymers of size~$m$.
\end{itemize}
Associated to these transitions,  $({M}_{(k)}^N(t))$ is the corresponding martingale. 
The above relation can thus be rewritten as
\begin{equation}\label{eqx12}
\Delta_k^N(t)=X_1^N(t)^k X_{n_c{-}k}^N(t)-{M}_{(k)}^N(t)-\sum_{i=1}^6 A_i^N(t).
\end{equation}
Proposition~\ref{lem1} shows that for  $T{\ge}0$, for $N$ sufficiently large, with high probability, the relation 
$X_{n_c{-}k}^N(\Psi(N)t){\le}N/\Phi(N)^{n_c{-}k{-}2}$ holds for all $t{\le}T$. 
Consequently the relation
\[
\frac{X_1^N(\Psi(N)t)^k X_{n_c{-}k}^N(\Psi(N)t)}{N^k\Phi(N)^k}\leq
\frac{X_{n_c{-}k}^N(\Psi(N)t)}{\Phi(N)^k}\le \frac{N}{\Phi(N)^{n_c{-}2}}
\]
also holds with high probability in the limit. In particular, due to Assumption~A{-}2, for the convergence in distribution
\[
\lim_{N\to+\infty} \left(\frac{X_1^N(\Psi(N)t)^k X_{n_c{-}k}^N(\Psi(N)t)}{N^k\Phi(N)^k}\right)=0.
\]
We now show that the processes $({A_i^N(\Psi(N)t)}/{(N^k\Phi(N)^k)})$, $i{=}1,\ldots,6$, also vanish as $N$ gets large. 
Since
\[
  A_1^N(t)={\frac{\lambda_{m-1}}{N}}\int_{0}^{t} B_1\left(X_1^N(u),X_m^N(u)\right)X^N_1(u)X^N_{m-1}(u)\,\diff u,
\]
  with $B_1(x,z){\steq{def}}\lambda_{m-1}((x{-}1)^k{-}x^k)(z{+}1){\le}c_1 x^{k-1}(z{+}1)$ for all $x$, $z{\in}\N$ and some convenient constant $c_1$, therefore we have
  \begin{multline*}
    \frac{A_1^N(\Psi(N)t)}{N^k\Phi(N)^k}\leq \frac{c_1}{N\Phi(N)^k} \int_{0}^{\Psi(N)t} \left(1{+}X_{n_c{-}k}^N(u)\right)X^N_{n_c{-}k{-}1}(u)\,\diff u\\\leq
    \frac{c_1}{N\Phi(N)^k} \int_{0}^{\Psi(N)t} \left( X_{n_c{-}k}^N(u)X^N_{n_c{-}k{-}1}(u) + X^N_{n_c{-}k{-}1}(u)^2\right)\,\diff u.
  \end{multline*}
  If $k{<}n_2{-}2$, Relation~\eqref{relation2} of Proposition~\ref{lem2} gives that the last process vanishes as $N$ gets large for the convergence in distribution.

  Otherwise, if $k{=}n_c{-}2$,
  \begin{multline*}
    \frac{A_1^N(\Psi(N)t)}{N^k\Phi(N)^k}\leq \frac{c_1}{N\Phi(N)^{n_c-2}} \int_{0}^{\Psi(N)t} \left( X_{1}^N(u)X^N_{2}(u) + X^N_{1}(u)\right)\,\diff u\\
\leq\frac{c_1}{N\Phi(N)^{n_c-2}} \int_{0}^{\Psi(N)t} X_{1}^N(u)X^N_{2}(u) \,\diff u
   +\frac{c_1\Psi(N)t}{\Phi(N)^{n_c-2}}.
  \end{multline*}
  The first term of the right hand side of the last relation is vanishing due to Relation~\eqref{relation2} of Proposition~\ref{lem2} and the second term is clearly converging to $0$.
  
Similarly, one can find a constant $c_2$ such that 
\[
A_2^N(t)\le c_2 \Phi(N) \int_{0}^{t}\hspace{-2mm}X^N_1(u)^{k-1}\left(X^N_{m}(u){-}1\right) X_{m}^N(u)\,\diff u,
\]
so that
\[
\frac{A_2^N(\Psi(N)t)}{N^k\Phi(N)^k}\le\frac{c_2}{N\Psi(N)^{k-1}}\int_{0}^{\Psi(N)t} \hspace{-4mm}|X_{m}^N(u){-}1|X^N_{m}(u)\,\diff u
\leq \frac{c_2}{N\Psi(N)^{k-1}}\int_{0}^{\Psi(N)t}\hspace{-4mm} X_{n_c-k}^N(u)^2\,\diff u,
\]
holds. Again  Proposition~\ref{lem2} can be used to show that this term vanishes as $N$ goes to infinity. 

Similarly, there are constants $c_i$, $i=3$,\ldots, $6$, such that the relations
\begin{align*}
\frac{A_3^N(\Psi(N))t}{N^k\Phi(N)^k}&\le\frac{c_3}{N\Phi(N)^{k}}\sum_{i\neq n_c-k-1,n_c-k}\int_{0}^{\Psi(N)t} X_{n_c-k}^N(u)X^N_{i}(u)\,\diff u,\\
\frac{A_4^N(\Psi(N))t}{N^k\Phi(N)^k}&\le\frac{c_4}{N\Phi(N)^{k}}\int_{0}^{\Psi(N)t} \left(X_{n_c-k}^N(u){+}X_1^N(u)  \right)X^N_{n_c-k}(u)\,\diff u,\\
\frac{A_5^N(\Psi(N))t}{N^k\Phi(N)^k}&\le\frac{c_5}{N\Phi(N)^{k-1}}\sum_{i=2}^{n_c-k-1}\int_{0}^{\Psi(N)t} X_{n_c-k}^N(u)X^N_{i}(u)\,\diff u,\\
\frac{A_6^N(\Psi(N))t}{N^k\Phi(N)^k}&\le\frac{c_6}{N\Phi(N)^{k-1}}\sum_{i=n_c-k+1}^{n_c-1}\int_{0}^{\Psi(N)t} \left(X_{n_c-k}^N(u){+}X_1^N(u){+}X^N_{i}(u)\right)X^N_{i}(u)\,\diff u
\end{align*}
hold and the corresponding terms also vanish at infinity for the convergence in distribution by using  Proposition~\ref{lem2}. 

To complete the proof, in view of Relation~\eqref{eqx12}, we have now to show that a similar result for the martingale $({M}_{(k)}^N(t))$, i.e. that, for the convergence in distribution,
\[
\lim_{N\to+\infty}\left(\frac{{M}_{(k)}^N}{N^k\Phi(N)^k}(\Psi(N)t)\right)=(0).
\]
From Relation~(3.31) of Lemma~I.3.30 of Jacod and Shiryaev~\cite{Jacod}, it is enough to show the following convergence in distribution of the sequence of previsible increasing processes,
\[
\lim_{N\to+\infty}\left(\croc{\frac{{M}_{(k)}^N}{N^k\Phi(N)^k}}(\Psi(N)t)\right)=0.
\]
Since $({M}_{(k)}^N(t))$ is a sum of  martingales $({M}_{(k)}^{i,N}(t))$,  $i{\in}\{1,\ldots,6\}$, associated to Poisson processes corresponding to the difference transitions. By orthogonality of these martingales, see Proposition~A.10 of Robert~\cite{Robert} for example, due to the independence of the Poisson processes, it is enough to show that the previsible increasing process of each of them vanish when $N$ gets large. We will detail only the case of  $({M}_{(k)}^{1,N}(t))$, the other martingales can be analyzed in a similar way. For $t{\ge}0$, with standard arguments, we have
\begin{multline*}
  \croc{\frac{{M}_{(k)}^{1,N}}{N^k\Phi(N)^k}}(\Psi(N)t)
  =\frac{\lambda_{m-1}}{N^{2k+1}\Phi(N)^{2k}}\\ \times \int_0^{\Psi(N)t}
  \left(\left(X^N_1(u){-}1\right)^{\mathclap k}(X^N_{m}(u){+}1){-}X^N_1(u)^kX^N_{m}(u)\right)^2X^N_1(u)X^N_{m-1}(u)\diff u,
\end{multline*}
with the same notations as before, we get that
\begin{multline*}
\croc{\frac{{M}_{(k)}^{1,N}}{N^k\Phi(N)^k}}(\Psi(N)t)
\leq  \frac{c_1^2}{N^2\Phi(N)^{2k}} \int_0^{\Psi(N)t} (X_1^N(u){+} X^N_{n_c-k}(u))^2X^N_{n_c-k-1}(u)\diff u\\
\leq  \frac{2c_1^2}{N\Phi(N)^{2k}} \int_0^{\Psi(N)t} (X_1^N{+} X^N_{n_c-k}(u))X^N_{n_c-k-1}(u)\diff u.
\end{multline*}
Again with  Proposition~\ref{lem2}, we get therefore that the previsible increasing process of $({M}_{(k)}^{1,N}(t))$ is converging in distribution to $0$ and, consequently the same result holds for the martingale.

By gathering these convergence results in Relation~\eqref{eqx12}, the proof of the proposition is completed. 
\end{proof}

For  $k{=}1$,\ldots, $n_c{-}2$, denote by $\rho_k{=}\lambda_k/\mu_k$ and $\overline{\rho}{=}\lambda_1\rho_2\cdots\rho_{n_c-1}$.
The above proposition shows that the following  sequence of processes
\begin{multline*}
\left(\lambda_{n_c{-}k{-}1}\prod_{i=n_c-k}^{n_c-1}\hspace{-2mm}{\rho_i}  \frac{1}{N^{k+1}\Phi(N)^{k}}\int_{0}^{\Psi(N)t}X^N_1(u)^{k+1}X^N_{n_c-k-1}(u)\,\diff u\right.
\\ \left. {-}\lambda_{n_c{-}k}\prod_{i=n_c-k+1}^{n_c-1}\hspace{-2mm}{\rho_i}\frac{1}{N^{k}\Phi(N)^{k-1}} \int_{0}^{\Psi(N)t}X^N_1(u)^k X_{n_c-k}^N(u)\,\diff u\right)
\end{multline*}
 converges in distribution to $0$. By summing up all these terms, we obtain the relation
\begin{multline}\label{balance}
\lim_{N\to+\infty}\left(\frac{\overline{\rho}}{N^{n_c-1}\Phi(N)^{n_c-2}}\int_{0}^{\Psi(N)t}X^N_1(u)^{n_c}\,\diff u\right.\\ \left. {-}\frac{\lambda_{n_c-1}}{N} \int_{0}^{\Psi(N)t}X^N_1(u) X_{n_c-1}^N(u)\,\diff u \right)=0.
\end{multline}

The next lemma establishes that, on the time scale $t{\mapsto} \Psi(N) t$, ``most'' of the polymers are still of size $1$.
\begin{lemma}\label{no}
The convergence in distribution 
 \begin{align*}
\lim_{N\to \infty}\left(\frac{1}{N}X^N\left(\Psi(N)\,t\right)\right)=(1,0,\ldots,0)
  \end{align*}
  holds. 
\end{lemma}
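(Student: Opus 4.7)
The plan is to reduce everything to the mass-conservation identity
\[
X_1^N(t) = N - \sum_{k=2}^{n_c} k\,X_k^N(t),
\]
so that it suffices to prove $X_k^N(\Psi(N)t)/N \to 0$ in distribution of processes for each $k \in \{2, \ldots, n_c\}$, and then read off the first coordinate from the identity. Since the limit $(1,0,\ldots,0)$ is deterministic, convergence in distribution is equivalent to uniform convergence on compact intervals in probability, so pointwise-in-$t$ estimates together with suitable supremum controls are enough.

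For indices $k \in \{2, \ldots, n_c-1\}$ the claim is a direct consequence of Proposition~\ref{lem1}. For $k \le k_c$ I would pick $\eps \in (0,1)$ and write
\[
\frac{X_k^N(\Psi(N)t)}{N} = \frac{1}{\Phi(N)^{k-1-\eps}} \cdot \frac{\Phi(N)^{k-1-\eps}}{N} X_k^N(\Psi(N)t),
\]
where the second factor tends to $0$ by Proposition~\ref{lem1} and the first factor tends to $0$ since $k-1-\eps > 0$ and $\Phi(N) \to +\infty$. For $k_c < k < n_c$, Assumption~A-2 guarantees that $\Phi(N)^\eps/N$ is bounded (and actually tends to $0$) for $\eps > 0$ sufficiently small, so the bound $X_k^N \le X_{k_c+1}^N + \cdots + X_{n_c-1}^N$ combined with Proposition~\ref{lem1} applied to the rescaled sum yields $X_k^N(\Psi(N)t)/N \to 0$.

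The nontrivial case is $k = n_c$, which is not covered by Proposition~\ref{lem1}. Starting from the integrated SDE~\eqref{SDE3}, I would use Relation~\eqref{balance} to replace the drift, up to an error tending to $0$ in distribution, by
\[
\frac{\overline\rho}{N^{n_c-1}\Phi(N)^{n_c-2}} \int_0^{\Psi(N)t} X_1^N(u)^{n_c}\,\diff u.
\]
Using $X_1^N \le N$ and the identity $\Phi(N)^{n_c-2} = N\Psi(N)$, this quantity is bounded by $\overline\rho\,t$, so the drift of $X_{n_c}^N(\Psi(N)\cdot)$ is tight on any compact interval. The previsible bracket of $M_{n_c}^N$ coincides, up to the factor $\lambda_{n_c-1}$, with this same drift (see~\eqref{SDE3croc}), hence is also tight, and Doob's maximal inequality gives $\sup_{s \le \Psi(N)T}|M_{n_c}^N(s)| = O_p(1)$. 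Combining, $X_{n_c}^N(\Psi(N)t) = O_p(1)$ uniformly on compacts, and division by $N$ gives the desired vanishing.

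The upgrade to process convergence is automatic: $X_{n_c}^N$ is non-decreasing, so its supremum on $[0,\Psi(N)T]$ equals its value at $\Psi(N)T$, and for $k \in \{2,\ldots,n_c-1\}$ the proof of Proposition~\ref{lem1} actually produces uniform supremum controls through the $M/M/\infty$ couplings. The main technical hurdle is the $n_c$-th coordinate: a crude bound on $X_{n_c-1}^N$ multiplied by the length $\Psi(N)t$ of the integration interval blows up, so it is essential to invoke the balance equation~\eqref{balance} rather than to estimate the product $X_1^N X_{n_c-1}^N$ coordinatewise.
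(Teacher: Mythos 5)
Your proof takes essentially the same route as the paper's: reduce everything to the $n_c$-th coordinate via mass conservation and Proposition~\ref{lem1}, then use the balance Relation~\eqref{balance} to control the compensator of $X_{n_c}^N(\Psi(N)\cdot)$ and a martingale argument for the fluctuation. The identification $\Phi(N)^{n_c-2}=N\Psi(N)$ and the resulting bound $\overline\rho\,t$ on the replaced drift are exactly what the paper uses, and you correctly observe that the balance equation cannot be bypassed by a crude bound on $X_1^N X_{n_c-1}^N$.

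The one step that is stated too loosely is the martingale estimate. From~\eqref{SDE3croc}, $\croc{M_{n_c}^N}(t)$ is \emph{equal} to the drift (not merely up to a factor), and after the balance substitution it is $O_p(1)$ on $[0,\Psi(N)T]$. But tightness of the bracket is not enough to invoke Doob's $L^2$ maximal inequality: Doob requires a bound on $\E[\croc{M_{n_c}^N}(\Psi(N)T)]$, and the only a priori bound on the bracket via $X_1^N,X_{n_c-1}^N\le N$ is $\lambda_{n_c-1}N\Psi(N)T$, which blows up; the $O_p(1)$ control comes only through~\eqref{balance} and is a statement in probability, not in $L^1$. The device that converts tightness of $\croc{M}$ into tightness of $\sup|M|$ is Lenglart's inequality, or — as in the paper — one divides by $N$ first so that the rescaled bracket $\croc{M_{n_c}^N/N}(\Psi(N)T)$ goes to $0$ in distribution and Relation~(3.31) of Lemma~I.3.30 of Jacod and Shiryaev applies. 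With Lenglart in place of Doob your argument is complete; note that it actually yields the stronger statement $X_{n_c}^N(\Psi(N)\cdot)=O_p(1)$, already most of Theorem~\ref{TheoPois}, whereas the paper keeps the factor $1/N$ throughout and proves only what the lemma requires.
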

\begin{proof}
  The estimates of  Proposition~\ref{lem1} and the fact that the sum of the coordinates of $X^N(t)$ is $N$, show that we just have  to prove the convergence result for the last coordinate $(X_{n_c}^N(t))$.
  
The SDE~\eqref{SDE3} and Relation~\eqref{SDE3croc} give the identity
\begin{equation}\label{eqn0}
 X_{n_c}^N(t)=\lambda_{{n_c}-1} \int_0^{t}\frac{X_1^N(u)X_{{n_c}-1}^N(u)}{N}\diff u+M^N_{n_c}(t),
\end{equation}
where $(M_{n_c}^N(t))$ is a martingale.
By Relation~\eqref{balance}, for $t{\ge}0$, for the convergence in distribution, the sequence of processes
\begin{equation}\label{da1}
\left(\frac{\lambda_{{n_c}-1} }{N}\int_0^{\Psi(N)t}\frac{X_1^N(u)X_{{n_c}-1}^N(u)}{N}\diff u\right)
\end{equation}
has the same limit as
\[
\left(\overline{\rho}\frac{1}{N^{n_c}\Phi(N)^{n_c-2}}\int_{0}^{\Psi(N)t}X^N_1(u)^{n_c}\,\diff u\right)
=\left(\overline{\rho}\frac{1}{N}\int_{0}^{t}\left(\frac{X^N_1(\Psi(N)u)}{N}\right)^{n_c}\,\diff u\right),
\]
and, since the coordinates of the last process are upper bounded by $\overline{\rho}t/N$, we deduce the convergence to $(0)$ of the sequence of processes~\eqref{da1}.

The previsible increasing previsible process of the corresponding martingale is given by 
\[
\left(\croc{\frac{M_{n_c}^N}{N}}(\Psi(N)t)\right)=
\left(\frac{\lambda_{n_c-1}}{N^2} \int_0^{\Psi(N)t} \frac{X_1^N(s)X_{n_c-1}^N(s)}{N}\diff s\right).
\]
With the same argument, by using again Relation~(3.31) of Lemma~I.3.30 of Jacod and Shiryaev~\cite{Jacod}, we get that the sequence of martingales $(M_{n_c}^N(\Psi(N)t)/N)$ is converging in distribution to $0$. The lemma is proved. 
\end{proof}

We are now ready to establish the main result of this section. 
\begin{theorem}\label{TheoPois}
The sequence of processes $(X_{n_c}^N(\Psi(N)t)$ is converging in distribution to a Poisson process on $\R_+$ with rate
\begin{equation}\label{rho}
\overline{\rho}\steq{def}\lambda_1\prod_{k=2}^{{n_c}-1}\frac{\lambda_{k}}{\mu_k},
\end{equation}
where $\Psi(N){=}\Phi(N)^{n_c-2}/N$.
\end{theorem}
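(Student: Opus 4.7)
The plan is to decompose $(X_{n_c}^N(\Psi(N)t))$ into its predictable compensator plus a martingale, identify the limit of the compensator as $\overline{\rho}t$ via Relation~\eqref{balance} and Lemma~\ref{no}, and then conclude by a standard Poisson convergence result for counting processes. From~\eqref{SDE3} and~\eqref{SDE3croc}, $(X_{n_c}^N(t))$ is a counting process with unit jumps and stochastic intensity $\lambda_{n_c-1}X_1^N(t)X_{n_c-1}^N(t)/N$, so on the time scale $t\mapsto \Psi(N)t$ one can write
\begin{equation*}
X_{n_c}^N(\Psi(N)t) = A^N(t) + M^N(t), \qquad A^N(t) \steq{def} \frac{\lambda_{n_c-1}}{N}\int_0^{\Psi(N)t}X_1^N(s)X_{n_c-1}^N(s)\,\diff s,
\end{equation*}
where $M^N(t)=M_{n_c}^N(\Psi(N)t)$ is a martingale whose predictable increasing process $\croc{M^N}(t)$ coincides with $A^N(t)$, since the jumps of $X_{n_c}^N$ have size $1$.

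The key step is to show that $A^N(t)$ converges in distribution to $\overline{\rho}t$. Relation~\eqref{balance}, obtained by telescoping Proposition~\ref{lembal} over $k=1,\dots,n_c{-}2$, asserts that $A^N(t)$ and
\begin{equation*}
\frac{\overline{\rho}}{N^{n_c-1}\Phi(N)^{n_c-2}}\int_0^{\Psi(N)t}X_1^N(u)^{n_c}\,\diff u \;=\; \overline{\rho}\int_0^t\left(\frac{X_1^N(\Psi(N)v)}{N}\right)^{n_c}\,\diff v
\end{equation*}
have the same asymptotic behavior, where the equality uses the change of variables $u=\Psi(N)v$ together with $\Psi(N)=\Phi(N)^{n_c-2}/N$. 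Lemma~\ref{no} gives that $X_1^N(\Psi(N)v)/N$ converges in distribution to $1$, and since the integrand is bounded by $1$, dominated convergence yields $A^N(t)\to\overline{\rho}t$ in distribution.

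To conclude, $(X_{n_c}^N(\Psi(N)t))$ is a simple counting process whose predictable compensator converges in probability to the deterministic continuous function $t\mapsto\overline{\rho}t$. A classical Poisson limit theorem for sequences of point processes (for example, Theorem~VIII.3.36 of Jacod and Shiryaev~\cite{Jacod}) then gives convergence in distribution to the homogeneous Poisson process with rate $\overline{\rho}$. The heavy lifting has already been done by Proposition~\ref{lembal}; the remaining delicate point is the standard verification that convergence of the compensator, together with the unit-jump structure, is enough to identify the limit as Poisson (rather than, say, a doubly stochastic one), which is encoded in the cited theorem.
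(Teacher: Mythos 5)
Your proof is correct and follows essentially the same route as the paper: same decomposition of $(X_{n_c}^N(\Psi(N)t))$ into compensator plus martingale via Relations~\eqref{SDE3}--\eqref{SDE3croc}, same use of Relation~\eqref{balance} to swap the compensator for $\overline{\rho}\int_0^t(X_1^N(\Psi(N)v)/N)^{n_c}\diff v$, same invocation of Lemma~\ref{no} with the boundedness of the integrand, and the same ``compensator converges to deterministic function implies Poisson limit'' argument. The only cosmetic difference is the reference for the final Poisson limit theorem (you cite Jacod--Shiryaev, the paper cites Kasahara--Watanabe and Brown), both of which are standard and apply.
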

\begin{proof}
Clearly  $(X_{n_c}^N(\Psi(N)t))$ is a counting process, i.e.\  a non-decreasing integer valued process with jumps of size one. By Equation~\eqref{eqn0}, its compensator is given by
  \[
\left( \lambda_{{n_c}-1} \int_0^{\Psi(N)t}\frac{X_1^N(u)X_{{n_c}-1}^N(u)}{N}\diff u\right).
\]
By Relation~\eqref{balance}, this sequence of processes has the same limit as
\[
\left(\overline{\rho}\frac{1}{N^{n_c-1}\Phi(N)^{n_c-2}}\int_{0}^{\Psi(N)t}X^N_1(u)^{n_c}\,\diff u\right)
=\left(\overline{\rho}\int_{0}^{t}\left(\frac{X^N_1(\Psi(N)u)}{N}\right)^{n_c}\,\diff u\right).
\]
Lemma~\ref{no} shows therefore that the compensator of $(X_{n_c}^N(\Psi(N)t))$ is converging in distribution to $(\overline{\rho}t)$.  Theorem~5.1 of~Kasahara and Watanabe~\cite{Kasahara}, see also Brown~\cite{Brown}, gives the desired convergence in distribution to a Poisson process with rate $\overline{\rho}$.
The theorem is proved. 
\end{proof}

\begin{prop}[Asymptotic of the First Nucleation Time]\label{hitprop}
If  $((U^N(t))$ is the process defined by Relations~\eqref{SDE} and~\eqref{mass} and whose initial state satisfies Relation~\eqref{init}, and 
$T^N$\!\!,  defined by Relation~\eqref{Hit1}, is the first time its $n_c$th coordinate $(U_{n_c}(\cdot))$ is non null,  then, for the convergence in distribution,
  \[
  \lim_{N\to+\infty} \frac{N}{\Phi(N)^{n_c-2}}T^N=E_{\overline{\rho}},
  \]
$E_{\overline{\rho}}$ is an exponential random variable with parameter $\overline{\rho}$ defined by Relation~\eqref{rho}.
\end{prop}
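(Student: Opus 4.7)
The plan is to show, by a pathwise coupling, that $T^N$ coincides with the first jump time of the auxiliary process $(X_{n_c}^N(t))$, after which the conclusion is essentially an immediate consequence of Theorem~\ref{TheoPois}.

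First I would drive the two systems of SDEs~\eqref{SDE} and \eqref{SDE1}--\eqref{SDE1nc} by one and the same underlying family of Poisson and marked Poisson processes, starting both from the same initial state (which satisfies Condition~\eqref{init}). Comparing the two systems, the only terms in the SDE for $(U_k^N)$, $1{\le}k{\le}n_c$, that do not appear in the SDE for $(X_k^N)$ are those involving $U_m^N$ with $m{\ge}n_c$; as long as $U_m^N(t){=}0$ for every $m{\ge}n_c$, these terms contribute nothing, and the remaining rates for coordinates $1,\ldots,n_c{-}1$ agree term by term in the two SDEs. Moreover, the only transition that can turn either $U_{n_c}^N$ or $X_{n_c}^N$ from $0$ into $1$ is the growth step $u{\to}u{-}e_1{-}e_{n_c{-}1}{+}e_{n_c}$, driven in both SDEs by exactly the same Poisson family $({\cal N}_{\lambda_{n_c{-}1}/N,\ell})$. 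A short induction on the successive jump times then gives $U^N(t){=}X^N(t)$ for $t{\le}T^N$ and in particular
\[
T^N = \inf\{t \ge 0 : X_{n_c}^N(t) \ge 1\}.
\]

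With this identification in hand, the proof reduces to a one-dimensional marginal extraction. By Theorem~\ref{TheoPois}, the sequence $(X_{n_c}^N(\Psi(N)t))$ converges in distribution, as a process, to a Poisson process of rate $\overline{\rho}$, so in particular for every $t{>}0$,
\[
\P\!\left(\frac{N}{\Phi(N)^{n_c-2}}\, T^N > t\right) = \P\!\left(X_{n_c}^N(\Psi(N)t) = 0\right) \xrightarrow[N\to\infty]{} e^{-\overline{\rho}t},
\]
where the first equality uses the identification of $T^N$ above together with the monotonicity of $(X_{n_c}^N(t))$, and the limit is just $\P(E_{\overline{\rho}}{>}t)$. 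This is precisely the claim.

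The genuine work, namely the Poisson limit of $(X_{n_c}^N(\Psi(N)t))$, has already been done in Theorem~\ref{TheoPois}, so I do not expect any substantial obstacle here. The one point that requires a little care is to verify that the coupling remains valid up to and including the jump at $T^N$ (and not merely strictly before it); this is why I emphasised above that the $0{\to}1$ transitions of the two $n_c$-th coordinates are triggered by the very same Poisson atom.
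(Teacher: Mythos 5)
Your proof is correct and follows essentially the same route as the paper: the paper's proof simply records the identity
\[
\left(\left(U_1^N,\ldots,U_{n_c}^N\right)(t{\wedge}T^N)\right)\steq{dist}\left(\left(X_1^N,\ldots,X_{n_c}^N\right)(t{\wedge}\tau^N)\right),
\]
where $\tau^N{=}\inf\{t{:}X_{n_c}^N(t){=}1\}$, and then invokes Theorem~\ref{TheoPois}; your pathwise coupling via a common Poisson family is just the natural way to justify that identity, and your final marginal extraction (using that $\{0\}$ is clopen in $\Z$ and the limiting Poisson process is a.s.\ continuous at each fixed $t$) is the same one the paper leaves implicit.
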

\begin{proof}
  Let
  \[
  \tau^N\steq{def}\inf\{t{\ge}0: X_{n_c}^N(t){=}1\},
  \]
  then from the   SDE's~\eqref{SDE} and~\eqref{SDE1}, one gets the  identity
  \[
\left(\left(U_1^N,\ldots,U_{n_c}^N\right)(t{\wedge}T^N)\right)\steq{dist}\left(\left(X_1^N,\ldots,X_{n_c}^N\right)(t{\wedge}\tau^N)\right).
  \]
\end{proof}
\section{Asymptotic Growth of Lag Time}\label{LagSec}\label{TwoSec}
In this section, the  evolution of the polymerization process after nucleation is investigated. The main difference with Section~\ref{OneSec} lies in the fact that the polymers become more stable after nucleation: a polymer of size $k{\geq}n_c$ is degraded at rate $\mu_{n_c}$ instead of the rate $\mu_k \Phi(N)$ when $k{<}n_c$ as in Section~\ref{OneSec}. See Relation~\eqref{muas}. As before we first  introduce an auxiliary process to study this phase. 
\subsection{A Super-Critical Branching Process}
\addcontentsline{toc}{section}{ \hspace{5mm} A Super-Critical Branching Process}
We will be interested by the evolution of the number of polymers whose size are greater than  $n_c$. For $\alpha{>}0$ and $\mu{>}0$,  a Markov process $(Z^{\alpha,\mu}(t)){=}(Z^{\alpha,\mu}_{i}(t),i{\geq}n_c)$ will be introduced for this purpose.  With a slight abuse of notations, we will consider $(Z^{\alpha,\mu}(t))$ as a process in the state space ${\cal S}_\infty$. Formally, it can be done by assuming that the $n_c{-}1$ first coordinates are null.
We denote by $\|z\|_{c}$ the sum of the components of the vector $z{=}(z_i)$ of the set ${\cal S}_\infty$ of states with finite mass defined by Relation~\eqref{Sinfty},
\begin{equation}\label{nc}
\|z\|_{c}\steq{def}\sum_{i{\ge}n_c} z_i.
\end{equation}
The generator $\Omega_{Z^{\alpha,\mu}}$ of the process is given by 
\begin{multline}\label{Gen2}
  \Omega_{Z^{\alpha,\mu}}(f)(z)=\sum_{k=n_c}^{+\infty} \left[f(z{+}e_{k+1}{-}e_k){-}f(z)\right]  \alpha z_k\\+
  \sum_{k=n_c}^{+\infty} \int_{{\cal S}_k} \left[f(z{+}y{-}e_k){-}f(z)\right]\mu z_k\nu_k(\diff y)
\end{multline}
where, as before, for $i{\geq}1$, $e_i$ is the $i$th unit vector of ${\cal S}_\infty$ and $f$ is a function on ${\cal S}_\infty$ with finite support.
Since coordinates with index greater or equal to  $n_c$ are of interest, it is assumed that the function $f$ does not depend on the first $n_c{-}1$ coordinates. 

We give a quick, informal, motivation for the introduction of  the Markov process $(Z^{\alpha,\mu}(t))$. It describes in fact the evolution of stable polymers. Assume for the moment that the polymerization rates  are independent of the sizes of polymers, i.e.\  $\lambda_k{=}\alpha$ for all $k{\ge}n_c$. A monotonicity argument will be used to have a more general framework.  The initial state is assumed to be  $Z^{\alpha,\mu}(0){=}e_{n_c}$ with only one polymer of size $n_c$ present at time $0$, as it is the case just after the first nucleation instant. Proposition~\ref{no} gives that, at this instant, the number of polymers is small with respect to $N$, i.e.\  the fraction of monomer is close to $1$.  A polymer of size greater that $n_c$ grows therefore  at a rate close  $\alpha$ as in Relation~\eqref{Gen2}. If the fragmentation of a polymer of size $k{\ge}n_c$ gives a polymer of size ${<}n_c$, then, due to the fast fragmentation rates below $n_c$, this last one is fragmented quickly into monomers and thus vanishes as in Relation~\eqref{Gen2} since coordinates with index less than $n_c$ are assumed to be $0$ for the process $(Z^{\alpha,\mu}(t))$. 
\begin{prop}\label{propZ}
If $Z^{\alpha,\mu}(0){=}e_{m}$ for some $m{\ge}n_c$, there exist $\kappa_0{\ge}0$,  $a_0{>}0$ and $\eta{>}0$ such for any $\alpha$ and $\mu{>}0$  such that  $\alpha/\mu{\geq}\kappa_0$, then the event
\[
{\cal F}_{Z^{\alpha,\mu}}=\left\{  \liminf_{t\to+\infty}e^{-a_0 t}\|Z^{\alpha,\mu}(t)\|_c>\eta\right\}
\]
has a positive probability. 
\end{prop}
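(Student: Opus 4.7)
The plan is to stochastically dominate $(\|Z^{\alpha,\mu}(t)\|_c)$ from below by a supercritical continuous-time Markov birth--death branching process, and then invoke the classical fact that a supercritical branching process grows exponentially fast on a non-extinction event of positive probability. Under the generator $\Omega_{Z^{\alpha,\mu}}$ polymers evolve independently of one another, so it is enough to analyse one of them in isolation and then assemble the branching tree.

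First, choose an integer $K_0{\ge}n_c$ and a constant $q{>}0$ as provided by Assumption~A-3 via Relation~\eqref{eqA32}, so that
\[
\nu_k\!\left(\left\{y{\in}{\cal S}_k{:}\sum_{i\ge n_c} y_i\ge 2\right\}\right)\ge q\quad\text{for all } k\ge K_0.
\]
For a polymer of size $m{\ge}n_c$, the growth clock (rate $\alpha$) and the fragmentation clock (rate $\mu$) are independent exponentials, so the probability that the polymer reaches size $K_0$ before it fragments is at least $(\alpha/(\alpha+\mu))^{K_0-m}{\ge}(\alpha/(\alpha+\mu))^{K_0-n_c}$. Conditional on this, its size at the next fragmentation is $\ge K_0$, and Assumption~A-4 applied to the event $\{\sum_i\ind{y_i\ge n_c}\ge 2\}$ shows that the probability of producing at least two polymers of size $\ge n_c$ at that fragmentation is at least $q$. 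Consequently, every polymer of size $\ge n_c$ gives birth, at its first fragmentation, to at least two stable offspring with probability at least
\[
p\steq{def}q\left(\frac{\alpha}{\alpha+\mu}\right)^{K_0-n_c}.
\]
Choose $\kappa_0$ large enough that $\alpha/\mu{\ge}\kappa_0$ forces $p{>}1/2$. On a common probability space one then constructs a continuous-time Markov branching process $(B_t)$ in which each individual has an independent $\mathrm{Exp}(\mu)$ lifetime and, at death, is replaced by two offspring with probability $p$ or by none with probability $1{-}p$. The lifetime matches the fragmentation clock of every polymer, and by the bound on $p$ the offspring distribution of $(Z^{\alpha,\mu}(t))$ stochastically dominates that of $(B_t)$ uniformly across generations (since offspring sizes are $\ge n_c$, so the same lower bound $p$ applies); a greedy coupling then yields $\|Z^{\alpha,\mu}(t)\|_c\ge B_t$ for all $t{\ge}0$.

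The process $(B_t)$ is a linear birth--death process with birth rate $\mu p$ and death rate $\mu(1{-}p)$ per individual, mean offspring number $2p{>}1$, Malthusian parameter $a_0\steq{def}\mu(2p{-}1){>}0$, and extinction probability $(1{-}p)/p{<}1$. Classical branching theory (e.g.\ Athreya--Ney) yields the a.s.\ convergence $e^{-a_0 t}B_t{\to}W$, where $W$ is non-negative with $\P(W{>}0){=}(2p{-}1)/p{>}0$; picking $\eta{>}0$ with $\P(W{>}\eta){>}0$ and using the coupling concludes $\P({\cal F}_{Z^{\alpha,\mu}}){\ge}\P(W{>}\eta){>}0$. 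The main subtlety lies in the coupling: the offspring of a polymer may have various sizes strictly larger than $n_c$ and their future fragmentation is not identically distributed to the parent's. Assumption~A-4 is precisely what absorbs this difficulty, by ensuring the probability of producing $\ge 2$ stable fragments is monotone in parent size, so that the single lower bound $p$ propagates uniformly through the genealogy; apart from this point, the analysis reduces to standard facts about supercritical Markov branching processes.
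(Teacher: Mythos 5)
Your overall strategy — stochastically lower-bound the number of stable polymers by a supercritical continuous-time branching process and invoke Athreya--Ney — is the same one the paper uses. However, there is a genuine gap in the quantitative part of your coupling.

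You replace each death event (rate $\mu$) by either two offspring with probability $p{=}q\,(\alpha/(\alpha{+}\mu))^{K_0-n_c}$ or zero offspring otherwise, and then assert that $\kappa_0$ can be chosen so that $p{>}1/2$, hence $2p{>}1$. But $p{\le}q$, where $q$ is just the $\liminf$ from Relation~\eqref{eqA32}, which is positive but in general can be arbitrarily small. If $q{\le}1/2$, no choice of $\kappa_0$ makes $p{>}1/2$, and your lower-bounding branching process is subcritical, so the argument breaks. The paper's proof escapes this by also using Relation~\eqref{eqA31}: once the polymer has grown past size $C_0 n_c$, its total unstable mass is bounded by $C_0(n_c{-}1)$, so the fragmentation yields \emph{at least one} stable fragment with probability $1$. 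The offspring-mean bound then takes the form
\[
\E\Big(\nu_M\big(\textstyle\sum_{i\ge n_c}y_i\ge1\big)\Big)+\E\Big(\nu_M\big(\textstyle\sum_{i\ge n_c}y_i\ge2\big)\Big)\ \ge\ (1{+}\eps)\Big(\frac{\alpha}{\alpha{+}\mu}\Big)^{k_0-n_c},
\]
which is strictly greater than $1$ once $\alpha/\mu$ is large, regardless of how small $\eps$ is. To repair your proof, the lower-bounding branching process should have offspring distribution $\{0,1,2\}$, with ``$0$'' only on the event of failing to reach size $k_0$ before fragmentation, ``$1$'' on the complementary event, and ``$2$'' with the extra probability $q$; this yields mean $\approx (1{+}q)(\alpha/(\alpha{+}\mu))^{k_0-n_c}{>}1$.

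A secondary issue: your invocation of Assumption~A-4 is both misstated and unnecessary. A-4 concerns events of the form $\{\sum_i\ind{y_i\ge\ell}\ge a\}$ (the number of sizes $i$ that appear at least $\ell$ times), which is not the event $\{\sum_{i\ge n_c}y_i\ge2\}$ of Relation~\eqref{eqA32}, nor the event $\{\sum_i\ind{y_i\ge n_c}\ge 2\}$ you wrote (the latter reads ``at least two sizes each produce $\ge n_c$ fragments''). In any case A-4 is not needed for the uniformity you worry about: $\liminf$ in~\eqref{eqA32} already gives a lower bound $q$ valid for all $k\ge K_0$, and the probability of reaching size $K_0$ before fragmentation is $(\alpha/(\alpha{+}\mu))^{\max(K_0-m,0)}\ge(\alpha/(\alpha{+}\mu))^{K_0-n_c}$ uniformly over all starting sizes $m\ge n_c$, so the single lower bound on the offspring law propagates through the genealogy without any monotonicity hypothesis.
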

\begin{proof}
The evolution of $(Z^{\alpha,\mu}(t))$ describes the the population of stable polymers generated by an initial polymer with size $m{\ge}n_c$. 
 Relation~\eqref{eqA31} of  Condition~A{-}3 gives the existence of $\eps{>}0$ and $k_0{\ge}C_0n_c$ such that if  $k{\ge}k_0$, 
\[
\nu_k\left( y: \sum_{i\geq n_c}y_i{\geq} 2\right)  >\eps.
\]
A stable polymer of size $m{\ge}n_c$ is fragmented after an exponentially distributed amount of time  $E_\mu$ with parameter $\mu$. Just before this instant its size has the same distribution as $M{=}m{+}{\cal N}_\alpha([0,E_\mu])$. Recall that only the fragments with size greater than $n_c$ are considered for $(Z^{\alpha,\mu}(t))$.  The process $(Z^{\alpha,\mu}(t))$ can then be also seen as a  multi-type branching process with the type of an individual being the size of the corresponding polymer.    The average number of stable polymers generated by the fragmentation of the polymer of size $M$ is greater than 
\begin{multline*}
  \E\left(\nu_M\left( y: \sum_{i\geq n_c}y_i{\geq} 1\right)\right)+\E\left(\nu_M\left( y: \sum_{i\geq n_c}y_i{\geq} 2\right)\right)\\
  \geq \P(M{\ge}k_0)(1+\eps)=\left(1+\eps\right)\left(\frac{\alpha}{\alpha{+}\mu}\right)^{k_0-n_c}\hspace{-8mm}.
\end{multline*}
By choosing $\alpha/\mu$ sufficiently large, the last quantity is strictly greater than $1$. We have thus shown that the process $(\|Z^{\alpha,\mu}(t)\|_c)$ is lower bounded by a continuous time supercritical branching process. The proposition is then a simple consequence of a classical result in this domain, see Chapter~V of Athreya and Ney~\cite{Athreya} for example. 
\end{proof}

\subsection{Limit Results}
\addcontentsline{toc}{section}{\hspace{5mm} Limit Results for the Lag Time}
 now return to the original polymerization process $(U_k^N (t))$ with values in the state space ${\cal S}_N$ defined by the SDE~\eqref{SDE} and initial state satisfies Condition~\eqref{init}. We study the asymptotic behavior of  the associated lag time $L_\delta^N$ defined by Relation~\eqref{HitL}, which is the first time when the mass of stable polymers exceeds $\delta N$,
\[
L_\delta^N=\inf\left\{t\geq 0:  \sum_{k=n_c}^{+\infty}k U_{k}^N(t){\ge} \delta N\right\}
\]
The variable $T^N$  defined by Relation~\eqref{Hit1}, which is the first time when a stable polymer is created,
\[
T^N=\inf\left\{t\geq 0:   U_{n_c}^N(t){=}1\right\}
\]
is a stopping time such that  $T^N{\le}L_{\delta}^N$.
Let
\begin{equation}\label{Initn}
  u^{N}=(u^{N}_k)\steq{def} U^N(T^N),
\end{equation}
in particular  $u_{n_c}^N{=}1$  and $u_{k}^N{=}0$ for $k{>}n_c$. 

In the rest of this section, we denote by $(\widehat{U}^N(t))$ the solution of the SDE~\eqref{SDE} with initial condition~\eqref{Initn} and $\widehat{L}_{\delta}^N$ the corresponding lag time,
\[
\widehat{L}_{\delta}^N=\inf\left\{t\geq 0:  \sum_{k=n_c}^{+\infty}k \widehat{U}_{k}^N(t){\ge} \delta N\right\},
\]
note that, by the strong Markov property, $\widehat{L}_\delta^N{\steq{dist}}L_\delta^N{-}T^N$.
We have to study the order of magnitude of $\widehat{L}_\delta^N$. We will prove  that, with a positive probability, the mass of stable polymers hits the value $\lfloor \delta N\rfloor$ in a duration of time of the order of $\log N$.

The following proposition is the analogue of Proposition~\ref{lem1} but with the additional feature that there may be polymer of size greater than $n_c$. 
\begin{prop}\label{lem12}
For any $t_0$, $\eps{>}0$,   the convergence in distribution of processes
\begin{equation}\label{jg1}
  \begin{cases}
    \displaystyle   \lim_{N\to+\infty} \left(\frac{\Phi(N)^{k-1-\eps}}{N}\sum_{p=k}^{n_c-1}\widehat{U}^N_p(t), 0{\le}t{\le}  \min(\widehat{L}_{\delta}^N,\Psi(N)t_0)\right)=(0),\quad    2{\le}k{\le k_c,}\\
    \displaystyle \lim_{N\to+\infty} \left(\frac{1}{\Phi(N)^{\eps}} \left(\widehat{U}^N_{k_c+1}(t){+}\cdots{+} \widehat{U}^N_{n_c-1}(t)\right), 0{\le}t{\le}  \min(\widehat{L}_{\delta}^N,\Psi(N)t_0)\right){=}(0).
  \end{cases}
\end{equation}
hold. 
\end{prop}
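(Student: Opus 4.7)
The plan is to mimic the proof of Proposition~\ref{lem1}, with two additional ingredients: the initial state $\widehat{U}^N(0){=}u^N{=}U^N(T^N)$ must be shown to satisfy, in distribution, the scaling of Condition~\eqref{init}, and the bounding $M/M/\infty$ arrival rate has to absorb a new contribution coming from fragmentations of stable polymers.

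I would first reduce the initial state. Since $(U^N(t))$ and $(X^N(t))$ have the same distribution on $[0,T^N]$, a direct application of Proposition~\ref{lem1} together with the strong Markov property at the stopping time $T^N$ gives that the components $(u_k^N,2{\le}k{\le}n_c{-}1)$ satisfy in distribution the same estimates as in Condition~\eqref{init}. The only discrepancy is the single stable polymer $u_{n_c}^N{=}1$, whose contribution to the total mass is $n_c$ and hence negligible.

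Next, as in the proof of Proposition~\ref{lem1}, I would set, for $2{\le}k{\le}n_c{-}1$,
\[
\widehat{Z}_k^N(t)\steq{def}\sum_{i=k}^{n_c-1}(i{-}1)\widehat{U}_i^N(t),
\]
and analyze its jumps on the stopped interval $[0,\widehat{L}_\delta^N{\wedge}\Psi(N)t_0]$. Negative jumps are still produced by fragmentations of unstable polymers with sizes in $[k,n_c{-}1]$ and occur at rate at least $c_1\Phi(N)^{1-\eps}\widehat{Z}_k^N(t)$ for a suitable constant $c_1{>}0$. The ordinary positive jumps, coming from polymerisations that bring an unstable polymer into the range $[k,n_c{-}1]$, contribute at rate at most $c_2 N$. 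The new source of positive jumps is the fragmentation of a stable polymer: by Condition~A{-}3 each such event yields at most $C_0$ unstable fragments and occurs at total rate $\mu_{n_c}\sum_{m\ge n_c}\widehat{U}_m^N(t)$; since $t{\le}\widehat{L}_\delta^N$, the sum $\sum_{m\ge n_c}\widehat{U}_m^N(t)$ is bounded by $\delta N/n_c$, so this extra contribution is again $O(N)$. The aggregation of a monomer with a polymer of size $n_c{-}1$, which here moves the polymer to size $n_c$ rather than to a cemetery state, only \emph{decreases} $\widehat{Z}_k^N$ and may be discarded for an upper bound on arrivals. Consequently $(\widehat{Z}_k^N(t))$ is stochastically dominated, up to the stopping time, by an $M/M/\infty$ process with arrival rate $\alpha_N^{(k)}{=}O(N)$ and service rate $\delta_N{=}c_1\Phi(N)^{1-\eps}$, to which the Laplace-transform computation of Relations~\eqref{expx1}--\eqref{expxi}, the tail estimate for the corresponding hitting time, and the induction on $k$ transcribe verbatim to yield~\eqref{jg1}.

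The main obstacle is precisely the control of the stable-fragmentation contribution: without the stopping at $\widehat{L}_\delta^N$ the number of stable polymers could a priori grow too fast and spoil the $O(N)$ bound on the arrival rate. Once this stopping is used to keep $\sum_{m\ge n_c}\widehat{U}_m^N(t)$ of order at most $N$, the rest of the argument is a direct transcription of the proof of Proposition~\ref{lem1}, with the alternative case where $(\beta_N^{(k)})$ vanishes handled via comparison with a time-rescaled $M/M/\infty$ queue, exactly as there.
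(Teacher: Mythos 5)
Your overall plan is sound and matches the paper's strategy: the identification of $\widehat{Z}_k^N$ (the paper's $\widehat{B}_k^N$), the reduction of the initial state via the strong Markov property at $T^N$ together with Propositions~\ref{lem1} and~\ref{hitprop}, and the observation that the stopping at $\widehat{L}_\delta^N$ is what keeps the stable-fragmentation contribution under control. However, there is a genuine gap in the final domination step. When a stable polymer is fragmented, the resulting jump of $\widehat{Z}_k^N$ is not of size $1$ but of size up to $a_0{=}(n_c{-}1)C_0$, since each of the at most $C_0$ unstable fragments contributes up to $n_c{-}2$ to the weighted sum. You therefore do \emph{not} obtain a standard $M/M/\infty$ dominating process, but rather a \emph{batch-arrival} $M/M/\infty$ queue where some arrivals come $a_0$ at a time. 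The Laplace-transform expression~\eqref{expx1}--\eqref{expxi}, which is specific to a single-jump birth--death chain, does not ``transcribe verbatim'' to this batch process, nor is the batch queue stochastically dominated by a single-arrival $M/M/\infty$ with arrival rate merely multiplied by $a_0$.

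The paper closes this gap with an additional construction: it writes the batch-arrival process $Q^N$ exactly as the sum of $a_0$ standard $M/M/\infty$ queues $L_{2,1}^N,\dots,L_{2,a_0}^N$ that share a common Poisson arrival stream of rate $\lambda_0 N$ but have independent service processes, with $L_{2,1}^N(0){=}Q^N(0)$ and $L_{2,\ell}^N(0){=}0$ for $\ell{\ge}2$. Each summand is a genuine $M/M/\infty$ queue to which the hitting-time estimate of Proposition~\ref{lem1} applies, and the required scaling limit for $Q^N$, hence for $\widehat{Z}_k^N$ by domination, follows by summing. You should insert this decomposition step; without it the appeal to the Laplace-transform computation is not justified.
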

\begin{proof}
It should be noted that, due to  Propositions~\ref{lem1} and~\ref{hitprop}, we have, for the convergence in distribution,
\[
\lim_{N\to+\infty}\frac{\Phi(N)^{1-\eps}}{N} \sum_{i=2}^{n_c-1} (i{-}1) \widehat{U}_i^N(0)=\lim_{N\to+\infty} \frac{\Phi(N)^{1-\eps}}{N}\sum_{i=2}^{n_c-1} (i{-}1){U}_i^N\left(T^N\right)=0.
  \]
We introduce the process
    \[
 \left(\widehat{B}_k^N(t)\right)\steq{def}\left(\sum_{i=k}^{n_c-1} (i{-}1)\widehat{U}_i^N(t)\right),
 \]
We proceed  as in the  proof of Proposition~\ref{lem1} for $k{=}2$. On the time interval $[0,\widehat{L}_{\delta}^N)$,  it can be stochastically  upper bounded by a jump process with initial state $\widehat{B}_2^N(t)(0)$, whose jump rates are given by, for $x{\in}\N$,
\begin{equation}\label{cpa}
 x\longrightarrow
 \begin{cases}
   x{+}1\text{ at rate } \lambda N\\
   x{-}1\phantom{ at rate aa} \mu_{n_c}\Phi(N) x \\
   x{+}(n_c{-}1)C_0\phantom{ at rate }\delta N\mu_{n_c},
 \end{cases}
\end{equation}
where  $\lambda$ is defined as in the proof of Proposition~\ref{lem1}. The last transition is associated to the fragmentation of polymers of size greater than $n_c$. Due to the time interval considered, the total  mass of these polymers is certainly less than $\delta N$ and the fragmentation of one of them gives at most $C_0$ polymers of size less than $n_c$ by Relation~\eqref{eqA31} of Assumption~A{-}3.

Assume that $k_c{\ge}2$. Let $a_0{=}(n_c{-}1)C_0$ and $\lambda_0{=}\lambda{+}\delta \mu_{n_c}$, then, on the time interval $[0,\widehat{L}_{\delta}^N)$,  the process $((\widehat{B}_2^N(t)))$ is stochastically dominated  by the jump process $(Q^N(t))$ with the same initial state and whose jump rates are given by, for $x{\in}\N$,
\begin{equation}\label{cpa2}
 x\longrightarrow
 \begin{cases}
   x{+}a_0\text{ at rate } \lambda_0 N\\
   x{-}1\phantom{ at rate aa} \mu\Phi(N)^{1-\eps/2} x.
 \end{cases}
\end{equation}
This is an $M/M/\infty$ with $a_0$ simultaneous arrivals. To prove the lemma, one has to use a similar argument as in the proof of Proposition~\ref{lem1}.
This is done as follows.  One can construct  a set of $a_0$ processes associated to $a_0$  $M/M/\infty$ queues, $(L_{2,\ell}^N(t))$, $\ell{\in}\{1,\ldots,a_0\}$. The queues share the same arrival process with  rate $\lambda_0N$ have independent services with  rate $\mu\Phi(N)^{1-\eps/2}$ and  initial conditions  $L_{2,1}^N(0){=}Q^N(0)$ and  $L_{2,1}^N(0){=}0$, for $\ell{=}2$,\ldots,$a_0$.  A coupling can be done so that $Q^N(t){=}L_{2,1}^N(t){+}L_{2,2}^N(t){+}\cdots{+}L_{2,a_0}^N(t)$ holds for all $t{>}0$. 

As in the proof of of Proposition~\ref{lem1},   for the convergence in distribution of processes, the relation
\[
\lim_{N\to+\infty} \left(\frac{\Phi(N)^{1-\eps}}{N}H(\Psi(N)t)\right)=(0)
\]
holds for all $H{=}L_{2,\ell}^N$,  $\ell{=}1$,\ldots,$a_0$ and, consequently, for $H{=}Q^N$. By domination, we finally get that 
\[
\lim_{N\to+\infty} \left(\frac{\Phi(N)^{1-\eps}}{N}\widehat{B}_2^N(\Psi(N)t)\right)=(0)
\]
holds.  The proof for $k_c{<}2$ and $2{<}k{\le}n_c{-}1$ follows the same kind of arguments as in the proof of Proposition~\ref{lem1}
\end{proof}

\begin{corollary}\label{lem22}
Under Assumption~A{-}3, 
for $\delta_0{>}\delta{>}0$  and $t_0{>}0$, if
\[
{\cal E}_N\steq{def} \left\{ \widehat{U}_1^N(t)\ge(1{-}\delta_0)N, \forall t\le\min(\widehat{L}_{\delta}^N,\Psi(N)t_0)\right\},
\]
then the sequence $(\P\left({\cal E}_N\right))$ is converging to $1$. 
\end{corollary}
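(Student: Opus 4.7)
The plan is to combine mass conservation with the uniform bounds on the unstable polymer population provided by Proposition~\ref{lem12}. Writing Relation~\eqref{mass} for the process $(\widehat{U}^N(t))$ gives the identity
\[
\widehat{U}_1^N(t) = N - \sum_{k=2}^{n_c-1} k\widehat{U}_k^N(t) - \sum_{k \geq n_c} k\widehat{U}_k^N(t),
\]
so one only has to control each of the two subtracted sums by a fraction of $N$ strictly less than $\delta_0$, uniformly for $t \in [0, \min(\widehat{L}_\delta^N, \Psi(N) t_0)]$, with probability tending to $1$.

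For the stable mass, the very definition of $\widehat{L}_\delta^N$ gives $\sum_{k \geq n_c} k \widehat{U}_k^N(t{-}) < \delta N$ for every $t < \widehat{L}_\delta^N$. The only transition that can raise this quantity by more than one unit is the absorption of a monomer by a polymer of size $n_c{-}1$, producing an increment of exactly $n_c$; fragmentation of a stable polymer either preserves its mass (if every fragment has size $\geq n_c$) or strictly decreases it. Hence $\sum_{k \geq n_c} k \widehat{U}_k^N(t) \leq \delta N + n_c$ throughout the time interval.

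For the unstable mass, since the index $k$ is bounded by the constant $n_c{-}1$ in the relevant range, $\sum_{k=2}^{n_c-1} k \widehat{U}_k^N(t) \leq (n_c{-}1) \sum_{k=2}^{n_c-1} \widehat{U}_k^N(t)$. Picking $\eps > 0$ small enough that both $N/\Phi(N)^{1-\eps}$ and $\Phi(N)^{\eps}$ are $o(N)$ (this is possible by Assumption~A{-}2), Proposition~\ref{lem12}, applied via its first statement with $k=2$ when $k_c \geq 2$ or via its second statement alone when $k_c = 1$, yields the uniform estimate $\sum_{k=2}^{n_c-1} \widehat{U}_k^N(t) = o(N)$ on the same random time window, with probability tending to $1$.

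Combining the two bounds gives $\widehat{U}_1^N(t) \geq N - \delta N - n_c - o(N)$ uniformly on $[0, \min(\widehat{L}_\delta^N, \Psi(N) t_0)]$, which exceeds $(1 - \delta_0) N$ for $N$ large enough since $\delta_0 > \delta$. The substantive technical content is thus already contained in Proposition~\ref{lem12}; the one point requiring care is the bound on the jump of the stable mass at the hitting time itself, which is exactly where the strict separation $\delta_0 > \delta$ is used and which absorbs the additive constant~$n_c$.
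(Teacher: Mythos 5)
Your proof is correct and takes essentially the same route as the paper: decompose via mass conservation, cap the stable mass using the definition of $\widehat{L}_\delta^N$, and invoke Proposition~\ref{lem12} (with $k=2$, $\eps=1/2$) to make the unstable mass $o(N)$. You are slightly more careful than the paper about the overshoot at the hitting time, tracking the additive $+n_c$ coming from the single transition that can push the stable mass past $\delta N$; the paper writes $\lceil \delta N\rceil$, which glosses over that overshoot, but the strict inequality $\delta_0>\delta$ absorbs it in both versions.
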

\begin{proof}
The conservation of mass gives that, for all $t{\ge}0$,
\[
\sum_{i=1}^{+\infty} i\, \widehat{U}_i^N(t)=\sum_{i=1}^{+\infty} i\, \widehat{U}_i^N(0),
\]
and, noting that we have, for  $t{\le}\widehat{L}_{\delta}^N$,
\[
\sum_{i=n_c}^{+\infty} i\, \widehat{U}_i^N(t)\leq \lceil \delta N\rceil,
\]
we complete the proof of the lemma by using Relation~\eqref{jg1} with $k{=}2$ and $\eps{=}1/2$. 
\end{proof}

\subsection*{A Coupling}
Let us introduce a Markov  process $(Z^{\alpha_0,\mu_{n_c}}(t))$ with  the initial state $Z^{\alpha_0,\mu_{n_c}}(0){=}e_{n_c}$ and   the generator defined by Relation~\eqref{Gen2}  with $\alpha_0{\steq{def}}\underline{\lambda}(1{-}\delta_0)$, where $\underline{\lambda}$ is defined in Relation~\eqref{lambdab}.
\begin{prop}[Coupling]\label{Coup}
For $t_0{>}0$ and $\delta{\in}(0,\delta_0)$, under Assumptions~A${}^*$,  one can construct a coupling of the two processes $(Z^{\alpha_0,\mu_{n_c}}(t), t{\ge}0)$ and $(\widehat{U}^N(t), t{\ge}0)$ such that, on the event ${\cal E}_N$ of Corollary~\ref{lem22}, the relation 
\begin{equation}\label{eqcoupl}
  \sum_{k\ge n} Z^{\alpha_0,\mu_{n_c}}_k(t) \leq   \sum_{k\ge n} \widehat{U}_k^N(t), \quad \forall n{\geq}n_c,
\end{equation}
holds for all $0{\leq} t{\le}\min(\widehat{L}_\delta^N,\Psi(N)t_0)$.
\end{prop}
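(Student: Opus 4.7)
The plan is to build, on ${\cal E}_N$, a time-dependent injective matching $\phi_t$ from the stable polymers of $Z^{\alpha_0,\mu_{n_c}}(t)$ into the stable polymers of $\widehat{U}^N(t)$ so that the $\widehat{U}^N$-size of $\phi_t(\sigma)$ is at least the $Z$-size of $\sigma$ for every $\sigma$ in the domain of $\phi_t$. Since on finite multisets of non-negative integers the partial order ``number of entries $\ge n$ dominates for every $n$'' coincides with ``decreasing rearrangement dominates componentwise'', the existence of such a matching at every $t \le \min(\widehat{L}_\delta^N, \Psi(N)t_0)$ is exactly equivalent to Relation~\eqref{eqcoupl}.

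I would initialize $\phi_0$ by pairing the single stable polymer of $Z^{\alpha_0,\mu_{n_c}}(0){=}e_{n_c}$ with the unique stable polymer of $\widehat{U}^N(0){=}u^N$ (also of size $n_c$ by~\eqref{Initn}). The joint evolution is then driven by shared Poisson clocks indexed by the $\widehat{U}^N$-polymers. For each matched pair $\sigma{\sim}\pi=\phi_t(\sigma)$ of respective sizes $n_c\le\ell_\sigma\le\ell_\pi$, the $\widehat{U}^N$-growth clock of $\pi$ rings at rate $\lambda_{\ell_\pi}\widehat{U}_1^N(t)/N \ge \underline{\lambda}(1-\delta_0)=\alpha_0$ on ${\cal E}_N$, so by thinning I extract a rate-$\alpha_0$ subprocess that serves as the growth clock of $\sigma$ prescribed by the generator~\eqref{Gen2}; each $\sigma$-growth is then accompanied by a simultaneous $\pi$-growth, and extra $\pi$-growths only widen the gap $\ell_\pi-\ell_\sigma$. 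Since both $\sigma$ and $\pi$ fragment at the common rate $\mu_{n_c}$, I couple their fragmentation clocks to coincide; at each ring I sample $Y^\sigma\sim\nu_{\ell_\sigma}$ and $Y^\pi\sim\nu_{\ell_\pi}$ in a monotone coupling supplied by Assumption~A{-}4, under which, for every $\ell\ge n_c$, the number of coordinates of $Y^\pi$ that are $\ge\ell$ dominates the corresponding number for $Y^\sigma$. This yields a size-preserving injection of the stable $\sigma$-fragments into the stable $\pi$-fragments, which extends $\phi$. All unmatched stable $\widehat{U}^N$-polymers and all polymers of size $<n_c$ evolve according to the native SDE~\eqref{SDE} and are ignored by $\phi_t$; they can only increase the tail sums on the right-hand side of~\eqref{eqcoupl}.

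The main obstacle is precisely the monotone coupling of $\nu_{\ell_\sigma}$ and $\nu_{\ell_\pi}$. Assumption~A{-}4 as stated is marginal stochastic dominance of each tail count, so upgrading it to a single almost-sure coupling that dominates \emph{all} tail counts simultaneously requires a Kamae--Krengel--O'Brien/Strassen-type argument on the partial order of non-increasing integer sequences. For the explicit fragmentation measures ${\rm UF}$, ${\rm BF}$, ${\rm MF}$ introduced in Section~\ref{Sec0}, this simultaneous coupling is transparent, as already exhibited in the proof of the preceding proposition (for instance, adding one extra ball in the multinomial allocation only increases each urn's count), so the construction is completely explicit in the examples of interest.
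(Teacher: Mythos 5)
Your proof takes essentially the same route as the paper: you build the same injective, size-dominating matching from $Z^{\alpha_0,\mu_{n_c}}$-polymers into $\widehat{U}^N$-polymers, couple the growth clocks by thinning (using $\lambda_k\widehat{U}_1^N(t)/N\ge\alpha_0$ on ${\cal E}_N$), synchronize the fragmentation clocks at the common rate $\mu_{n_c}$, and invoke Assumption~A-4 to inject the stable $Z$-fragments into the stable $\widehat{U}^N$-fragments in a size-preserving way, with unmatched $\widehat{U}^N$-polymers evolving natively. Your remark that A-4 as stated gives only marginal tail-count dominance, so that producing a single almost-sure monotone coupling of $\nu_{\ell_\sigma}$ and $\nu_{\ell_\pi}$ really requires a Strassen/Kamae--Krengel--O'Brien step (or the explicit couplings available for ${\rm UF}$, ${\rm BF}$, ${\rm MF}$), is a fair observation that the paper passes over silently.
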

\begin{proof}
At time $0$, there is exactly a  polymer of size $n_c$ for $(\widehat{U}^N(t))$ and for $(Z^{\alpha_0,\mu_{n_c}}(t))$.  For $t{\ge}0$, recall, Relation~\eqref{nc},  that 
  \[
\|Z^{\alpha_0,\mu_{n_c}}(t)\|_c=\sum_{n{\geq}n_c}Z^{\alpha_0,\mu_{n_c}}_n(t)
  \]
  is the number of polymers for $(Z^{\alpha_0,\mu_{n_c}}(t))$.

If this last quantity is not $0$, we denote by $A_p(t)$, $1{\le}p{\le}\|Z^{\alpha_0,\mu_{n_c}}(t)\|_c$ the respective sizes of the corresponding polymers. The order of the sizes is arbitrary.  One will show that one can construct a coupling with  the following property: , on the event ${\cal E}_N$,  for  $0{\leq} t{\le}\min(\widehat{L}_\delta^N,\Psi(N)t_0)$, we can associate $\|Z^{\alpha_0,\mu_{n_c}}(t)\|_c$ distinct polymers described by the vector $\widehat{U}^N(t)$, whose sizes are given respectively by $B_p(t)$, $1{\le}p{\le}\|Z^{\alpha_0,\mu_{n_c}}(t)\|_c$,  and such that the relation $A_p(t){\leq}B_p(t)$ holds for all $1{\le}p{\le}\|Z^{\alpha_0,\mu_{n_c}}(t)\|_c$. This property implies that Relation~\eqref{eqcoupl} holds. It is proved by induction on the number of jumps of $(Z^{\alpha_0,\mu_{n_c}}(t))$ and $(\widehat{U}^N(t))$. It  clearly holds at time $0$. 

Assume that this relation holds at some fixed time $0{\leq} t{\le}\min(\widehat{L}_\delta^N,\Psi(N)t_0)$ , we will show that one can construct a version of the two process after that time so that the relation will also hold after the next jump of  $(Z^{\alpha_0,\mu_{n_c}}(t))$ and $(\widehat{U}^N(t))$. We now give the construction of the next jump.
\begin{enumerate}
\item A monomer addition to the polymer of size $A_i(t{-})$ occurs  at rate   $\alpha_0$. Remark that
  \[
  \alpha_0{\le}\lambda_{B_i(t{-})}\frac{\widehat{U}_1^N(t)}{N}.
  \]
Indeed, by definition,  $\lambda_{B_i(t{-})}{\ge}\underline{\lambda}$ and,  on the event ${\cal E}_N$, one has  the relation $\widehat{U}_1^N(t{-}){\ge}\underline{\lambda}(1{-}\delta_0)N$. 

The coupling is done so that a monomer addition to the polymer of size  $B_i(t)$ is also occurring  at that time for the process $(\widehat{U}^N(t))$. 
\item A monomer addition to the polymer of size $B_i(t{-})$ occurs  at rate  given by
  \[
  \lambda_{B_i(t{-})}\frac{\widehat{U}_1^N(t{-})}{N}{-}\alpha_0.
  \]
   There is no change for the process $(Z^{\alpha_0,\mu_{n_c}}(t))$ with this event.
\item If $A_i(t{-}){=}k$ and $B_i(t{-}){=}k'$, $n_c{\le}k{\le}k'$.\\
  At rate  $\mu_{n_c}$  both polymers of size $k$ [resp. of size $k'$]  are fragmented as $A^1_1$, \ldots $A^1_{n_A}$  [resp. $B^1_1$, \ldots $B^1_{n_B}$ ] according to the distribution $\nu_k$ [resp. $\nu_{k'}$].  By Assumption~A{-}4, the random variables $(A^1_i)$ and $(B^1_i)$ can be chosen so that, for any $1{\le}i{\le}n_A$, there exists some $1{\le}m_i{\le}n_B$ such that $A^1_i{\le}B^1_{m_i}$ and all indices $m_i$, $i={1}$, \ldots, $n_A$ are distinct. Note that we keep the $A^1_i$, $1{\le}i{\le}n_A$, whose values are greater or equal to $n_c$. 

\item All jumps involving the other polymers of $(\widehat{U}^N(t))$ are done as in the original setting. 
\end{enumerate}
With this construction, it is easily seen that  $(\widehat{U}^N(t))$ and $(Z^{\alpha_0,\mu_{n_c}}(t))$ are indeed Markov processes with the generator~\eqref{Gen}  and~\eqref{Gen2}  respectively. Moreover, each of the transitions described above preserve the desired relation.  The proposition is proved. 
\end{proof}
\begin{prop}\label{logprop}
Under Assumptions~${\rm A}^*$, if $\underline{\lambda}{>}\kappa_0\mu_{n_c}$, $\kappa_0$ is defined in Proposition~\ref{propZ} and $\underline{\lambda}$ by Relation~\eqref{lambdab}, then for any $\delta{<}1{-}\kappa_0\mu_{n_c}/\underline{\lambda}$, there exist  positive constant $p_0$ and  $K$, and $N_0$ such that, for any $N{\ge}N_0$,
\[
\P\left(\frac{\widehat{L}_{\delta}^N}{\log N} \leq K\right) \ge  p_0.
\]
\end{prop}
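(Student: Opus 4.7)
The plan is to leverage the supercritical branching estimate of Proposition~\ref{propZ} together with the coupling of Proposition~\ref{Coup} to force $\widehat{L}_\delta^N$ to occur by time $K\log N$ on an event of probability bounded away from~$0$. First I would pick $\delta_0\in(\delta,1{-}\kappa_0\mu_{n_c}/\underline{\lambda})$ (allowed by hypothesis) and set $\alpha_0\steq{def}\underline{\lambda}(1{-}\delta_0)$, so that $\alpha_0/\mu_{n_c}>\kappa_0$. Applying Proposition~\ref{propZ} to $(Z^{\alpha_0,\mu_{n_c}}(t))$ started at $e_{n_c}$ produces an event $\mathcal{F}$ of some positive probability $q_0$ on which $\|Z^{\alpha_0,\mu_{n_c}}(t)\|_c\ge\eta e^{a_0 t}$ for all sufficiently large~$t$ (depending on the realization). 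A truncation on the almost sure threshold time then gives $\P(\|Z^{\alpha_0,\mu_{n_c}}(s)\|_c\ge\eta e^{a_0 s})\ge q_0/2$ for every $s$ large enough.

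Next I would fix $K>1/a_0$ and set $t_N\steq{def}K\log N$. Assumption~A{-}2 ensures $\Psi(N)/\log N\to{+}\infty$, so $t_N\le\Psi(N)$ for $N$ large. Corollary~\ref{lem22} applied with $t_0{=}1$ gives $\P(\mathcal{E}_N)\to 1$, and on $\mathcal{E}_N$ the coupling of Proposition~\ref{Coup} is valid on $[0,\min(\widehat{L}_\delta^N,t_N)]$. From the tail-count domination $\sum_{k\ge n}Z^{\alpha_0,\mu_{n_c}}_k(t)\le\sum_{k\ge n}\widehat{U}_k^N(t)$, valid for every $n\ge n_c$, an Abel summation yields
\[
\sum_{k\ge n_c}k\,\widehat{U}_k^N(t)\;\ge\;\sum_{k\ge n_c}k\,Z^{\alpha_0,\mu_{n_c}}_k(t)\;\ge\;n_c\,\|Z^{\alpha_0,\mu_{n_c}}(t)\|_c,
\]
the last inequality being trivial since $(Z^{\alpha_0,\mu_{n_c}}(t))$ charges only indices $\ge n_c$.

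To conclude, on the intersection $\mathcal{E}_N\cap\{\|Z^{\alpha_0,\mu_{n_c}}(t_N)\|_c\ge\eta e^{a_0 t_N}\}$, supposing $\widehat{L}_\delta^N>t_N$ would yield
\[
\sum_{k\ge n_c}k\,\widehat{U}_k^N(t_N)\;\ge\;n_c\,\eta\,N^{a_0 K},
\]
which strictly exceeds $\delta N$ for $N$ large since $a_0 K>1$, contradicting the definition of $\widehat{L}_\delta^N$. Hence $\widehat{L}_\delta^N\le K\log N$ on this intersection, whose probability is at least $q_0/2-\P(\mathcal{E}_N^c)\ge q_0/4$ for $N$ large, so $p_0\steq{def}q_0/4$ does the job. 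The step I expect to be the main obstacle is the passage from the $\liminf$ conclusion of Proposition~\ref{propZ} to the uniform-in-large-$s$ lower bound $\P(\|Z(s)\|_c\ge\eta e^{a_0 s})\ge q_0/2$: although it amounts only to a standard truncation of the almost sure threshold time, it is the step where qualitative supercriticality must be turned into a quantitative single-time probability estimate. The remaining ingredients—Assumption~A{-}2 placing $K\log N$ below $\Psi(N)$, the Abel-summation rewrite of the coupling inequality in terms of mass, and Corollary~\ref{lem22}—combine cleanly.
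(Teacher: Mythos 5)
Your proposal is correct and follows essentially the same route as the paper: fix $\delta_0\in(\delta,1{-}\kappa_0\mu_{n_c}/\underline{\lambda})$, set $\alpha_0=\underline{\lambda}(1{-}\delta_0)$, invoke Proposition~\ref{propZ} for $(Z^{\alpha_0,\mu_{n_c}}(t))$, invoke Corollary~\ref{lem22} and the coupling of Proposition~\ref{Coup}, and compare at time $K\log N$ with $a_0K>1$. Two small presentational remarks: the Abel-summation step is more than you need, since the coupling inequality at $n=n_c$ together with the trivial bound $\sum_{k\ge n_c}\widehat U_k^N(t)\le\sum_{k\ge n_c}k\,\widehat U_k^N(t)$ already suffices (this is what the paper does); and the step you flagged as the potential obstacle — converting the $\liminf$ event $\mathcal F_{Z^{\alpha_0,\mu_{n_c}}}$ into a single-time probability bound via a truncation of the random threshold time — is indeed the right fix, and is in fact handled somewhat more explicitly in your write-up than in the paper's own proof, which jumps to $\mathcal F_{Z^{\alpha_0,\mu_{n_c}}}\cap\mathcal H_N=\emptyset$ and lets the final $\liminf$ over $N$ absorb the pathwise ``eventually'' in the definition of $\mathcal F_{Z^{\alpha_0,\mu_{n_c}}}$.
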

\begin{proof}
In the following, all statements are understood on the event ${\cal E}_N$ defined in Corollary~\ref{lem22}, this result shows  that this event has a probability close to $1$ as $N$ gets large. For $K{>}0$,  note that, due to Condition~\eqref{Psi} of Assumption~A{-}2, for $N$ sufficiently large then $K\log N{\le}t_0\Psi(N)$.
We fix $\delta_0{\in}(\delta,1{-}\kappa_0\mu_{n_c}/\underline{\lambda})$ and $\alpha_0{=}\underline{\lambda}(1{-}\delta_0)$.

From Proposition~\ref{Coup}, it can be assumed that there exists a Markov process $(Z^{\alpha_0,\mu_{n_c}}(t))$ with generator defined by Relation~\eqref{Gen2} and initial point $e_{n_c}$ such that the relation
  \[
\|Z^{\alpha_0,\mu_{n_c}}(t)\|_c=\sum_{k\ge n_c} Z^{\alpha_0,\mu_{n_c}}_k(t)\leq \sum_{k\ge n_c} \widehat{U}_k^N(t),
\]
holds for all $t{\le}\widehat{L}_{\delta}^N$. Since, for $t{\ge}0$,
\[
 \sum_{k\ge n_c} \widehat{U}_k^N(t)\le  \sum_{k\ge n_c} k\widehat{U}_k^N(t)
\]
\[
  {\cal H}_N\steq{def}\left\{\widehat{L}_{\delta}^N{>}K\log N\right\} \subset \left\{\|Z^{\alpha_0,\mu_{n_c}}(K\log N )\|_c\leq \lfloor {\delta} N\rfloor \right\},
  \]
since  $\underline{\lambda}(1{-}\delta_0){>}\mu\kappa_0$, with the notations of Proposition~\ref{propZ}, one can take $K{=}2/a_0$ then
\[
  {\cal F}_{Z^{\alpha_0,\mu_{n_c}}}{\cap} {\cal H}_N {=}\emptyset,
  \]
  as soon as $N{>} 2{\delta_0}/\eta$, where the event ${\cal F}_{Z^{\alpha_0,\mu_{n_c}}}$ is defined in Proposition~\ref{propZ}, hence
  \[
  \liminf_{N\to+\infty}   \P\left({\cal E}_N{\cap}{\cal F}_{Z^{\alpha_0,\mu_{n_c}}}{\cap}\left\{\widehat{L}_{\delta}^N \leq K\log N\right\}\right)=\P({\cal F}_{Z^{\alpha_0,\mu_{n_c}}}){>}0.
  \]
  The proposition is proved.
\end{proof}
\begin{theorem}[Growth Rate for the Lag Time]\label{theolag}
  Under Assumptions~A${}^*$,   if $\underline{\lambda}{>}\kappa_0\mu_{n_c}$,  the constant $\kappa_0$ is defined in Proposition~\ref{propZ} and $\underline{\lambda}$ by Relation~\eqref{lambdab}, then, if $\delta{\in}(0,1{-}\kappa_0\mu_{n_c}/\underline{\lambda})$ and for $\eps{>}0$, there exist $K_1$ and $K_2$ such that
\[
\liminf_{N\to+\infty} \P\left(K_1\leq \frac{L_\delta^N}{\Psi(N)} \leq K_2 \right)>1{-}\eps,
\]
where $\Psi(N){=}\Phi(N)^{n_c{-}2}/N$. 
\end{theorem}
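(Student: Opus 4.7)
The plan is to establish the two bounds on $L_\delta^N/\Psi(N)$ separately. The lower bound is essentially free: since no polymer of size $n_c$ exists before $T^N$, one has $L_\delta^N \ge T^N$, and Proposition~\ref{hitprop} gives $T^N/\Psi(N) \to E_{\overline{\rho}}$ in distribution. Picking $K_1 > 0$ small enough that $\P(E_{\overline{\rho}} \ge K_1) > 1 - \eps/2$ handles this side.

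The substance lies in the upper bound, where the strategy is to amplify the single-attempt success probability $p_0 \steq{def} \P(\mathcal{F}_{Z^{\alpha_0,\mu_{n_c}}}) > 0$ (from Proposition~\ref{propZ}, with $\alpha_0 = \underline{\lambda}(1-\delta_0)$ for a fixed $\delta_0 \in (\delta, 1-\kappa_0\mu_{n_c}/\underline{\lambda})$) by running many nucleation attempts. By Proposition~\ref{logprop} and Assumption~A-2, a single explosion of the coupled branching process forces $L_\delta^N$ to occur within an additional $K \log N = o(\Psi(N))$ units of time from the triggering nucleation, which is negligible on the $\Psi(N)$ scale.

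To formalize the iteration I would let $(\tau_k)_{k \ge 1}$ enumerate the successive creation times of $n_c$-polymers in $(U^N(t))$ and, extending the coupling of Proposition~\ref{Coup}, tag the descendants of the $k$-th such polymer and couple them to an independent copy $Z^{(k)}$ of the branching process $(Z^{\alpha_0,\mu_{n_c}}(t))$ starting from $e_{n_c}$; the independence across $k$ comes from dedicating a fresh family of Poisson processes to each $Z^{(k)}$. On the event $\mathcal{E}_N$ of Corollary~\ref{lem22} --- which persists throughout $[0, L_\delta^N]$ with probability $\to 1$ --- the growth rate within each lineage is at least $\alpha_0$, so the relations $\|Z^{(k)}\|_c \le \#(\text{tagged descendants of the $k$-th polymer})$ are maintained. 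Consequently, the explosion events $\mathcal{F}^{(k)}$, $k \le M$, are independent with $\P(\mathcal{F}^{(k)}) \ge p_0$, which yields $\P(\bigcap_{k=1}^M (\mathcal{F}^{(k)})^c) \le (1-p_0)^M < \eps/4$ once $M$ is chosen large, and on $\bigcup_k \mathcal{F}^{(k)}$ one has $L_\delta^N \le \tau_M + K \log N$.

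It then remains to show that $\tau_M = O_P(\Psi(N))$, which I would do by a strong Markov / renewal argument: each failed attempt goes extinct in finite time and consumes only a.s.\ finitely many monomers, so after $k < M$ failures the state $(U_1^N,\ldots, U_{n_c-1}^N)$ still satisfies Condition~\eqref{init} up to an $o(N)$ perturbation (the unstable coordinates controlled by Proposition~\ref{lem12}); Proposition~\ref{hitprop} applied after each restart then shows that each inter-nucleation interval is $O_P(\Psi(N))$, so that $\tau_M \le K_2 \Psi(N)/2$ with probability $> 1-\eps/4$ for $K_2$ large. Combining these estimates gives $\P(L_\delta^N \le K_2 \Psi(N)) > 1-\eps/2$ and closes the argument. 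I expect the principal technical obstacle to be the rigorous simultaneous construction of the $M$ independent couplings: although each tagged lineage behaves almost independently under the lower bound of Corollary~\ref{lem22}, the bookkeeping needed to route fragmentation jumps to the correct lineage and to preserve the dominance relation after each fragmentation producing several $n_c$-sized descendants must be carried out by induction on jumps of $(U^N(t))$, in the spirit of the proof of Proposition~\ref{Coup} but performed in parallel for all $k \le M$.
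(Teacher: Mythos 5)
Your proposal follows essentially the same strategy as the paper: the lower bound comes for free from $L_\delta^N\ge T^N$ and Proposition~\ref{hitprop}, and the upper bound is obtained by amplifying the single-attempt success probability $p_0$ of the branching coupling (Propositions~\ref{propZ}--\ref{logprop}) through repetition, then showing the total elapsed time is $O_P(\Psi(N))$. Where you diverge slightly is packaging: you run $M$ parallel independent couplings simultaneously and take $M$ large, whereas the paper restarts sequentially and stochastically dominates $L_\delta^N$ by a geometric sum $S^N=\sum_{i=1}^{1+G_{p_0}}(T_i^N+K\log N)$. Two small imprecisions in your version are worth flagging. First, you enumerate $(\tau_k)$ as all successive $n_c$-polymer creation times, but fragmentation of a large stable polymer also creates $n_c$-polymers; if $\tau_{k+1}$ is such a descendant of an earlier lineage, the independence of $Z^{(k+1)}$ from $Z^{(j)}$, $j\le k$, breaks. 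You would need to restrict the $\tau_k$ to fresh nucleations (monomer addition to an $(n_c-1)$-polymer) or, as the paper does, branch on whether any stable polymer survives the previous attempt. Second, your renewal step asserts that after each failure the state again satisfies Condition~\eqref{init}; this is not automatic because a non-exploding branching attempt need not have gone extinct by $\tau_{k+1}$. The paper handles exactly this by distinguishing the case where stable polymers persist at $R_k^N$ (restart only the branching coupling, no fresh nucleation needed) from the case where none remain (invoke Proposition~\ref{lem12} to verify Condition~\eqref{init} and wait $T_{k+1}^N$). With those two points incorporated, your argument matches the paper's proof.
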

\begin{proof}
The existence of the $K_1$ is a simple consequence of the fact that $L_\delta^N{\ge}T^N$ and Proposition~\ref{hitprop}.
 
For $i{\in}\N$, $E_{i,\overline{\rho}}$  will denote an exponentially distributed random variables with parameter $\overline{\rho}$  defined by Relation~\eqref{rho}.
  
By Proposition~\ref{hitprop} and the strong Markov property of $(U^N(t))$, at time $T_1^N{\steq{def}}T^N$ a polymer of size $n_c$ is created.  Proposition~\ref{hitprop} gives that $T_1^N/\Psi(N)$ converges in distribution to $E_{1,\overline{\rho}}$.   According to Proposition~\ref{logprop}, the exists some $N_0$ such that, for $N{\ge}N_0$, with probability at least $p_0$, there is a fraction $\delta$ of monomers is polymerized into stable polymers before time $R_1^N{\steq{def}}T^N{+}K\log N$.

If this does not happen, there are two possibilities:
  \begin{enumerate}
\item  There is at least one stable polymer at time $R_1^N$. One can construct another coupling with an independent process $(\widetilde{Z}^{\alpha_0,\mu_{n_c}}(t))$ with the same distribution as $(Z^{\alpha_0,\mu_{n_c}}(t))$. Again, there exists $N_1$ such that, if $N{\ge}N_1$,  with probability at least $p_0$ that  a fraction $\delta$ of monomers is polymerized into stable polymers before time $R_2^N{\steq{def}}R_1^N{+}K\log N$.  
\item There are no stable polymers at time $R_1^N$. Due to Proposition~\ref{lem12}, there exists some $N_1$ such that if $N{\ge}N_1$, the state $\widehat{U}^N(R_1^N)$ satisfies Condition~(I). If $R_1^N{+}T_2^N$ is the time of the first nucleation time after time $R_1^N$, Proposition~\ref{hitprop} gives that $T_2^N/\Psi(N)$ converges in distribution to $E_{2,\overline{\rho}}$.
\end{enumerate}
This decomposition shows that the lag time $L^N_\delta$ can be stochastically upper-bounded by a random variable
  \[
S^N\steq{def}\sum_{i=1}^{1+G_{p_0}} \left(T_i^N{+}K\log N\right),
  \]
where  $G_{p_0}$ is a geometrically distributed random variable with parameter $p_0$ independent of a sequence of random variables $(T_i^N)$ such that,  for $i{\ge}1$, the sequence $(T_i^N)$ converges in distribution to $E_{i,\overline{\rho}}$. It is easy to prove that the sequence  of random variables $(S^N/\Psi(N))$ is tight. The theorem is therefore proved. 
\end{proof}

\noindent
When $\Phi(N){=}N^{\gamma}$, $N{\ge}1$,
Assumption~A-2 requires that the nucleus size satisfies $n_c{>}2{+}1/\gamma$, the above theorem gives that the lag time is in this case of the order of $N^{\gamma(n_c{-}2){-}1}$.

\bibliographystyle{amsplain}
\bibliography{ref}
\newpage

\section{Appendix}\label{Ap1}
\subsection{Biological Background}
The protein polymerization processes  investigated in this paper are believed to be the main phenomena at the origin  of several  neuro-degenerative diseases such as Alzheimer's, Parkinson's and Huntington's diseases for example. The general picture of this setting is the following. At some moment,  for some reasons,    within  a neural cell  a fraction of the proteins of a given type  are produced in a anomalous state, defined as misfolded state. Recall that if a protein is a sequence of amino-acids,  its  three-dimensional structure determines also its functional properties. 

A misfolded protein has the same sequence of amino-acids but a different spatial architecture. It turns out that misfolded proteins tend to aggregate  to form fibrils, also called polymers. These fibrils are believed to have a toxic impact in the cell, on its membrane in particular, leading to its death.  The prion protein PrP$^{C}$ is an example of such protein that can be polymerized when it is in the state PrP$^{SC}$. The corresponding disease is the Bovine Spongiform Encephalopathy (BSE), also known as the mad cow disease.  This  (rough) description is not completely accurate  or complete, moreover some aspects are disputed, but it is used in a large part of the current literature. See the interesting historical survey Pujo-Menjouet~\cite{pujomenjouet}. Other biological processes such as actin filamentation, or yet industrial processes exhibit similar mechanisms,  see Wegner and Engel~\cite{Wegner}.

\subsubsection*{The Variability of the Polymerization Process}
Neuro-degenerative diseases are  quite diverse. They can be infectious, like the BSE, others are not, like Alzheimer (apparently). Nevertheless they all exhibit large, variable, time spans for the development of the disease, from several years to 10 years.

When experiments are done in vitro  with convenient types of proteins/monomers and with no initial polymers, a related phenomenon is observed. The fraction of monomers consumed by the polymerization process exhibit an $S$-curve behavior: it stays at $0$ for several hours, and quickly reaches $1$, the state where most of monomers are polymerized.  The other key feature of these experiments concerns the variability of the instant of the take-off phase of the $S$-curve from an experiment to another. See Szavits-Nossan et al.~\cite{Szavits} and  Xue et al.~\cite{Radford}.    See Figure~\ref{fig1} where twelve experiments are represented, the instant when half of the proteins are polymerized varies from 7.3h. to 10.1h. with an average of 8.75h.

\begin{figure}[ht] 
  \includegraphics[scale=0.35]{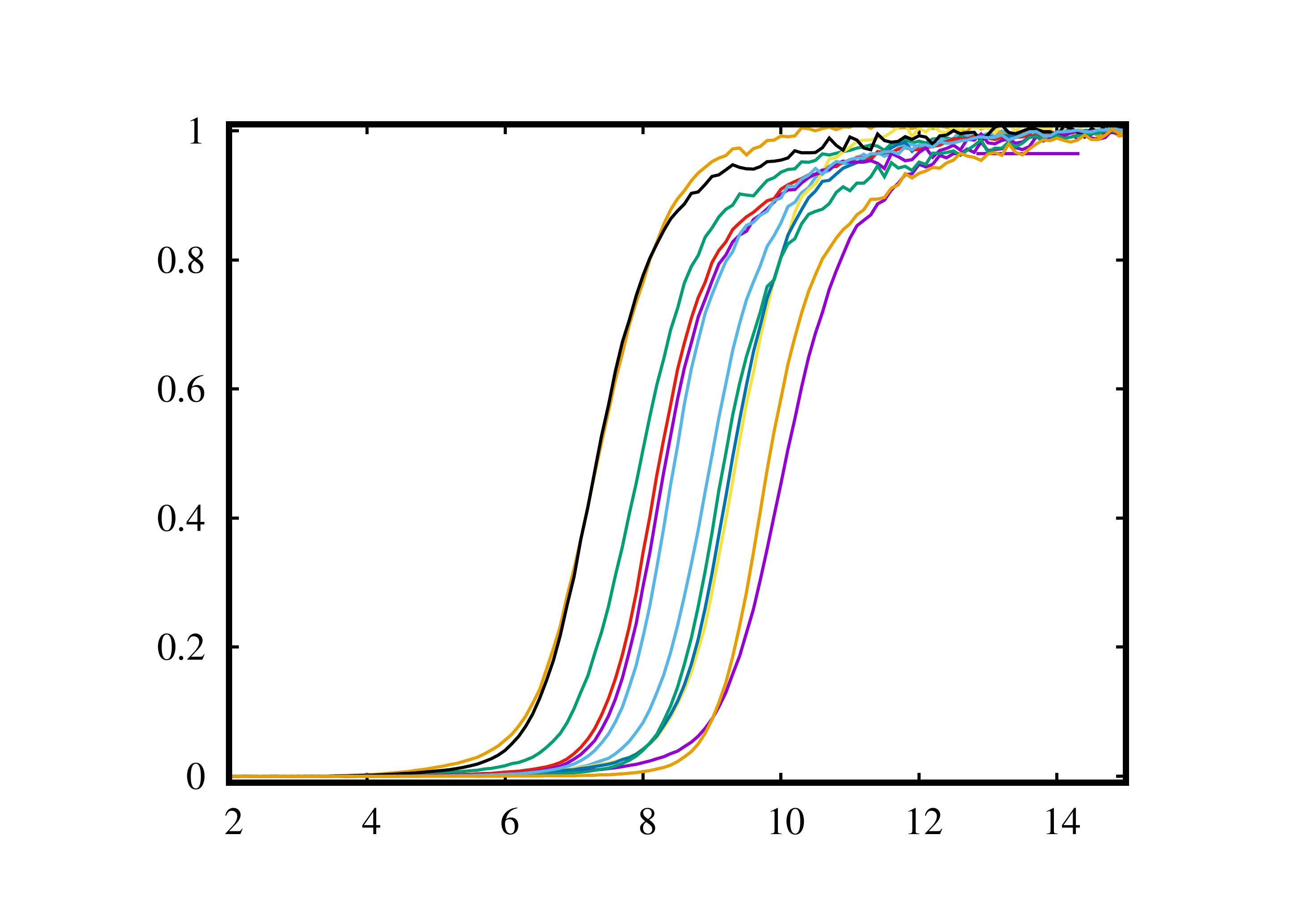}
  \put(-170,10){Time (hours)}
\put(-280,35){\rotatebox{90}{ Fraction of Polymers with Size ${\ge}2$}}
\caption{Twelve experiments for the time evolution of fraction of the mass of polymers with size greater $2$. From data published in Xue et al.~\cite{Radford}, see also Eugene et al.~\cite{EXRD}\label{fig1}.}
\end{figure}            
The initial step of the chain reactions giving rise to polymers  consists in the spontaneous formation of a so-called \emph{nucleus}, that is, the simplest possible polymer able to ignite the reaction of polymerization. This early phase is called \emph{nucleation}, and is still far from being understood.

\subsection{Marked Poisson Point Processes}\label{mpp}
We first recall briefly some elementary aspects of stochastic calculus with marked Poisson point processes. They are used throughout the paper.  See Jacobsen~\cite{Jacobsen} and Last and Brandt~\cite{Last} for more details. 
Let ${\cal N}_\lambda{=}(t_n)$ be a Poisson point process on $\R_+$ with parameter $\lambda$ and an independent  sequence  $(U_n)$ of i.i.d.  random variables on some locally compact space $H$, $\mu$ denotes the common distribution of these variables. The marked Poisson point process ${\cal N}_\lambda^U$ is defined as a point process on $\R_+{\times}H$, by
\[
{\cal N}_\lambda^U=\sum_{n\in\N}\delta_{(t_n,U_n)},
\]
if $f$ is a non-negative measurable function on $\R_+{\times}H$, one defines, for $t{\geq}0$,
\[
\int_0^t f(s,u){\cal N}_\lambda^U(\diff s, \diff u)=\sum_{n\in\N} f(t_n,U_n)\ind{t_n\leq t}
\]
and, if $F{\in}{\cal B}(H)$ is a Borelian subset of $H$,
\[
{\cal N}_\lambda^U([0,t]\times F)=\int_0^t \ind{u\in F}{\cal N}_\lambda^U(\diff s, \diff u)=\sum_{n\in\N} \ind{t_n\leq t, U_n\in F}.
\]
The natural filtration associated to ${\cal N}_\lambda^U$ is $({\cal F}_t)$, with, for $t{\geq}0$,
\[
{\cal F}_t=\sigma\left({\cal N}_\lambda^U([0,s]\times F): s{\leq} t, F{\in}{\cal B}(H)\right).
\]
\begin{prop}\label{propApp}
If $g$ is a c\`adl\`ag function on $\R_+$ and $h$ is Borelian on $H$ such that
  \[
\int_0^t g(s)^2\,\diff s{<}{+}\infty, \quad \forall t\geq 0 \text{ and } \int_H h(u)^2\nu(\diff u){<}{+}\infty,
\]
then the process
\[
(M(t))\steq{def}\left(\int_0^t g(s-)h(u){\cal N}_\lambda^U(\diff s, \diff u)-\lambda\int_H h(u)\nu(\diff u) \int_0^tg(s)\,\diff s \right)
  \]
  is a square integrable martingale with respect to the filtration $({\cal F}_t)$, its previsible increasing process is given by
  \[
  \left(\croc{M}(t)\right)=\left(\lambda\int_H h^2(u)\nu(\diff u) \int_0^tf(s)^2\,\diff s\right)
  \]
\end{prop}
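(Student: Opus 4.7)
The plan is to prove this via the standard two-step procedure for stochastic integrals with respect to Poisson random measures: first establish the result for simple integrands by an explicit computation, then extend to general c\`adl\`ag $g$ by an $L^2$ isometry argument.

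First I would verify the assertion when $g$ is of the simple form $g(s)=\sum_{i=0}^{n-1} a_i \mathbbm{1}_{(t_i,t_{i+1}]}(s)$ with $0{=}t_0{<}t_1{<}\cdots{<}t_n$ and $a_i{\in}\R$, in which case $g(s-)$ equals $g(s)$ except at the endpoints (a Lebesgue-null set), and $g(s-)$ is $({\cal F}_s)$-predictable. For such $g$,
\[
M(t) = \sum_{i=0}^{n-1} a_i \left(\int_{(t_i{\wedge}t,\,t_{i+1}{\wedge}t]\times H} h(u)\,{\cal N}_\lambda^U(\diff s,\diff u) - \lambda\nu(h)(t_{i+1}{\wedge}t - t_i{\wedge}t)\right),
\]
with $\nu(h){\steq{def}}\int_H h(u)\nu(\diff u)$. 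The martingale property on $(t_i,t_{i+1}]$ follows directly from the fact that on disjoint time intervals the restrictions of ${\cal N}_\lambda^U$ are independent marked Poisson point processes; conditioning on ${\cal F}_{t_i}$ and using that a centered compound Poisson sum is a mean-zero random variable independent of ${\cal F}_{t_i}$ gives $\E(M(t_{i+1}{\wedge}t)-M(t_i{\wedge}t)\mid{\cal F}_{t_i}){=}0$. Summing these telescoping differences yields the martingale property on all of $\R_+$.

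Next, to compute $\E(M(t)^2)$ for simple $g$, I would use the same independence across disjoint time intervals to reduce the calculation to a single interval $(t_i,t_{i+1}]$. On such an interval, $\int h(u)\,{\cal N}_\lambda^U(\diff s,\diff u)$ is a compound Poisson variable with Poisson count of parameter $\lambda(t_{i+1}{-}t_i)$ and jump law $\nu$; a direct variance computation gives $\lambda(t_{i+1}{-}t_i)\int h^2 d\nu$. Summing, I obtain the isometry
\[
\E(M(t)^2) \;=\; \lambda \int_H h(u)^2\,\nu(\diff u) \int_0^t g(s)^2\,\diff s.
\]
This same computation, performed on each subinterval $[0,t]\cap(t_i,t_{i+1}]$ and aggregated, identifies the previsible increasing process as $\croc{M}(t){=}\lambda\nu(h^2)\int_0^t g(s)^2\diff s$ (the quoted expression with $f$ a typo for $g$), since the integrand is continuous in $t$ and hence predictable.

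For the extension to general c\`adl\`ag $g$ with $\int_0^t g(s)^2\diff s<\infty$, I would approximate $g$ in $L^2([0,T],\diff s)$ by a sequence of simple functions $g_n$ (for instance, $g_n(s){=}g(\lfloor 2^n s\rfloor/2^n)$, using that $g$ is bounded on compact intervals by c\`adl\`agness and that $g_n(s)\to g(s-)$ for almost every $s$ by the right-continuity of $g$). The isometry proved above shows that the corresponding sequence of martingales $(M_n(t))$ is Cauchy in $L^2(\P)$ uniformly on compact time intervals (via Doob's inequality applied to $M_n-M_m$). The $L^2$-limit $M(t)$ retains the martingale property, inherits the isometry formula, and one identifies its previsible increasing process by passing to the limit in $\croc{M_n}$. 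The main technical delicacy, I expect, is handling the left-continuity of $g(s-)$ cleanly so that the simple approximants $g_n$ genuinely converge to the predictable version of the integrand; once this is in hand, the remainder of the argument is a routine application of Doob's $L^2$-inequality and the definition of the compensator.
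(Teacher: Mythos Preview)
Your argument is correct and follows the standard route: verify the claim for simple (step) integrands by direct computation using the independence and compound-Poisson structure of the marked point process on disjoint intervals, obtain the $L^2$-isometry, and then extend to general c\`adl\`ag $g$ by approximation and Doob's inequality. The identification of the previsible increasing process is likewise correct (and yes, the $f$ in the displayed formula is a typo for $g$).

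As for comparison with the paper: there is nothing to compare. The paper does not prove this proposition; it is stated in the appendix as a recalled standard fact, with the reader referred to Jacobsen and to Last and Brandt for details. Your proof is exactly the kind of argument those references contain.
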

In the paper, since we are dealing with several Poisson point processes, the (implicit)  definition of the filtration $({\cal F}_t)$ is extended so that it includes all of them. 

\subsection{Proof of Proposition~\ref{lem2}}\label{moresdes}
In this section we prove Relations~\eqref{relation1} and~\eqref{relation2} of the proposition by induction on $r$ varying from $n_c$ to $2$. The various stochastic integral equations used are listed in Section~\ref{SId} below.

\noindent
When $r{=}n_c$ and $k{\leq} n_c{-}2$, one has
\[
\left(\frac{1}{N\Phi(N)^{n_c-2}}\int_0^{\Psi(N)t}X_{{n_c}-k}^N(u)X_h^N(u)\diff u\right)\\{=}
\left(\int_0^{t}\frac{X_{{n_c}-k}^N}{N}{\cdot}\frac{X_h^N}{N}(\Psi(N)u)\,\diff u\right),
\]
Proposition~\ref{lem1} shows that this process converges in distribution to $0$ when $N$ goes to infinity  for all $k{=}1$,\dots,$n_c{-}2$ and $2{\leq}h{\leq}n_c{-}1$.
Relation~\eqref{relation2} also holds in this case.

Now suppose, by induction, that for all $r{>}\ell$ and $1{\leq}k{<}r{\land} (n_c{-}1)$, $2{\leq}h{\leq}n_c{-}1$, one has the convergences in distribution
\eqref{relation1} and \eqref{relation2}.
On will prove that this property holds for $r{=}\ell$ and for $k$ from $1$ to $(r{-}1){\land} (n_c{-}2)$.
Take $k{=}1$ and we will first prove the convergence~\eqref{relation1} for $h{=}n_c{-}1$ and Convergence~\eqref{relation2}, then Convergence~\eqref{relation1} by induction on $h$, from $n_c{-}1$ to $2$.

Take $k{=}1$ and $h{=}n_c{-}1$. Since $X_j^N(t){\leq}N$ for all $j{\geq}1$, then for $i{=}n_c{-}1,n_c{-}2$
\begin{multline*}
\left(\frac{1}{N\Phi(N)^{r-1}}\right)^2\int_0^{\Psi(N) t}\left(2X_{{n_c}-1}^N(u)\pm 1\right)^2\frac{X_1^N(u)X_{i}^N(u)}{N}\diff u\\\leq
\frac{(2N+1)^2}{N^3\Phi(N)^{2r-2}}\int_0^{\Psi(N)t}X_{1}^N(u)X_{i}^N(u)\diff u\leq \frac{9}{N\Phi(N)^{r}}\int_0^{\Psi(N)t}X_{1}^N(u)X_{i}^N(u)\diff u,
\end{multline*}
as a process, the last term converges to $0$ in distribution by Relation~\eqref{relation2} of the induction. 

Similarly, the term
\begin{multline*}
  \left(\frac{1}{N\Phi(N)^{r-1}}\right)^2\int_0^{\Psi(N) t}\Phi(N)X_{{n_c}-1}^N(u)\left(2X_{{n_c}-1}^N(u)-1\right)^2\diff u\\
       {\leq}\frac{4}{N\Phi(N)^{2r-3}}\int_0^{\Psi(N)t}X_{{n_c}-1}^N(u)^2\diff u{\leq}\frac{4}{N\Phi(N)^{r-1}}\int_0^{\Psi(N)t}X_{{n_c}-1}^N(u)^2\diff u
\end{multline*}
converges to $0$ in distribution as a process by Relation~\eqref{relation1} of the induction. Recall that $r{=}\ell$. Therefore, one gets the previsible increasing process, see Relation~\eqref{An0M}, of the martingale
\[
\left(\frac{1}{N\Phi(N)^{r-1}}{M}_{n_c-1,n_c-1}^N(\Psi(N)t)\right)
\]
is converging to $0$ as $N$ goes to infinity. By  Doob's inequality, we obtain that this  martingale is thus vanishing for $N$ large.

Similarly, for $h{\in}\{n_c{-}2,n_c{-}1\}$, the induction assumption gives the convergence in distribution
\[
\lim_{N\to+\infty} \left(\frac{1}{N\Phi(N)^{r-1}}\int_0^{\Psi(N)t}\left(2X_{{n_c}-1}^N(u)\pm 1\right)\frac{X_1^N(u)X_{h}^N(u)}{N}\diff u\right)=(0).
\]
From Proposition~\ref{lem1} we get that, for the convergence in distribution,
\[
\lim_{N\to+\infty} \left(\frac{1}{N\Phi(N)^{r-1}}X_{{n_c}-1}^N(\Psi(N)t)^2 \right)=(0).
\]
By gathering these results in Equation~\eqref{An0} of Section~\ref{SId}, one finally gets that 
\begin{multline*}
\lim_{N\to \infty}\left(\frac{1}{N\Phi(N)^{r-2}}\int_0^{\Psi(N)t}\left(2X_{{n_c}-1}^N(u)-1\right)X_{{n_c}-1}^N(u)\diff u\right){=}(0).
\end{multline*}
For $x{\in}\N$ the relation $x^2{\leq} (2x{-}1)x$ gives therefore that 
\begin{equation}\label{ind2}
  \lim_{N\to \infty}\left(\frac{1}{N\Phi(N)^{r-2}}\int_0^{\Psi(N)t}X_{{n_c}-1}^N(u)X_{h}^N(u)\diff u\right){=}(0),
\end{equation}
for $h{=}{n_c}{-}1$.

Take $k{=}1$ and $h{=}1$. By using Relation~\eqref{SDE1}, one gets
 By Relation~\eqref{relation2} of the induction for $r{+}1$, for any $j{=}1,\dots,n_c{-}1$, $i{=}n_c{-}1,n_c{-}2$ the processes
\[
\left(\frac{1}{N\Phi(N)^r}\int_{0}^{\Psi(N)t}\frac{X_{j}^N(u)}{N}X_{1}^N(u)X_{i}^N(u)\right)\le\left( \frac{1}{N\Phi(N)^r}\int_{0}^{\Psi(N)t}X_{1}^N(u)X_{i}^N(u)\right)
\]
converge in distribution to $(0)$ when $N$ gets large.
By Relation~\eqref{relation1} of the induction for $r{=}\ell{+}1$, for any $j{=}2,\dots,n_c{-}1$, the processes
\[
\left(\frac{\Phi(N)}{N\Phi(N)^r}\int_{0}^{\Psi(N)t}X_{j}^N(u)X_{n_c-1}^N(u)\right)
\]
converge in distribution to $(0)$ when $N$ is converging to infinity. By using similar approach as in the previous case, by replacing $t$ by $\Psi(N)t$ and by multiplying Relation~\eqref{eqx1} of Section~\ref{SId} by $1/(N\Phi(N)^r)$, we obtain the convergence~\eqref{relation2} for $r{=}\ell$ when $k{=}1$.

Now, we prove Relation~\eqref{relation1}  by induction on $h$, from $n_c{-}1$ to $2$. Assume it holds for all $h{\in}\{h'{+}1,\ldots,n_c{-}1\}$.  If Identity~\eqref{eqx4} of Section~\ref{SId} is multiplied by ${1}/({N\Phi(N)^{r-1}})$ and if $t$ is replaced by $\Psi(N)t$, then we show that several of its terms vanish in the limit. They are examined one by one.  \renewcommand{\labelenumi}{\alph{enumi})}
\begin{enumerate}
\item By Proposition~\ref{lem1} and the fact that $r{\geq}2$, one has
  \[
\lim_{N\to+\infty} \left(\frac{1}{N\Phi(N)^{r-1}}X_{{n_c}-1}^N{\cdot}X_{h}^N(\Psi(N)t)\right)=(0).
  \]
\item For $i{\in}\{n_c{-}1, n_c{-}2\}$, 
\[
\frac{1}{N\Phi(N)^{r-1}} \int_0^{\Psi(N)t}X_{h}^N(u) \frac{X_{i}^N(u)X_{1}^N(u)}{N}\,\diff u\leq
\frac{1}{N\Phi(N)^{r-1}} \int_0^{\Psi(N)t}X_{i}^N(u)X_{h}^N(u)  \,\diff u.
\]
Relation~\eqref{relation1} of the induction shows that the last term of this inequality converges in distribution to $0$ when $N$ gets large.
\item For $h{-}1{\ge}1$ ,
\begin{multline*}
\frac{1}{N\Phi(N)^{r-1}} \int_0^{\Psi(N)t}X_{h-1}^N(u) \frac{X_{n_c{-}1}^N(u)X_{1}^N(u)}{N}\,\diff u\\\leq
\frac{1}{N\Phi(N)^{r-1}} \int_0^{\Psi(N)t}X_{n_c{-}1}^N(u) X_{1}^N(u) \,\diff u,
\end{multline*}
the recurrence relation~\eqref{relation2} for $r$ gives that the last term of this relation vanishes as $N$ goes to infinity.
\item For all $n_c{-}1{\geq}i{\geq}h{+}1$, the recurrence assumption gives
  \[
\lim_{N\to+\infty}\left(\frac{1}{N\Phi(N)^{r-2}} \int_0^{\Psi(N)t}X_{i}^N(u) X_{n_c{-}1}^N(u)\,\diff u\right)=(0).
\]
\item The martingale.  The relation
  \[
\croc{\frac{1}{N\Phi(N)^{r-1}}M_{n_c-1,h}^N}(\Psi(N) t)=\frac{1}{N^2\Phi(N)^{2r-2}}\croc{M_{n_c-1,h}^N}(\Psi(N)t)
\]
and, in the same way as before, by checking each term of  the expression~\eqref{crocM} of $\langle M_{n_c-1,h}^N\rangle(t)$, one also gets the convergence in distribution
\[
\lim_{N\to+\infty} \left(\frac{1}{N\Phi(N)^{r-1}}M_{n_c-1,h}^N\left(\Psi(N)t\right)\right)=(0).
\]
\end{enumerate}
One gets finally that the remaining term of Relation~\eqref{eqx4}  of Section~\ref{SId} is also vanishing,  the convergence in distribution
\[
\lim_{N\to \infty}\left(\frac{1}{N\Phi(N)^{r-2}}\int_0^{\Psi(N)t}X_{{n_c}-1}^N(u)X_{h}^N(u)\diff u\right){=}(0)
\]
holds, i.e.\ Relation~\eqref{relation1}  is true for $h$. This gives the proof of this recurrence scheme for $k{=}1$ and all $n_c{-}1{\geq}h{\geq}2$.

To proceed the induction on $k$, from $1$ to $r{-}1$, one uses Equations~\eqref{Anm} and~\eqref{crocM2}, \eqref{eqx1} and~\eqref{eqx5}, and also~\eqref{eqx6} and~\eqref{crocM}  in Section~\ref{SId} below for the processes
\[
\left(\frac{X_{{n_c}-k}^N(\Psi(N)t)^2}{N\Phi(N)^{r-1}}\right),\  \left(\frac{X_1^N{\cdot}X_{n_c-k}^N(\Psi(N)t)}{N\Phi(N)^r}\right),\  \left(\frac{X_{{n_c}-k}^N{\cdot}X_{h}^N(\Psi(N)t)}{N\Phi(N)^{r-1}}\right),
\]
and, with the same method which has been used for the first step one gets that,  for $1{\le} k{\le}(r{-}1)\land(n_c{-}2)$ and  $2{\le} h{\le} n_c{-}1$.
The proposition is proved. 

\subsubsection{Some Stochastic Integral Equations}\label{SId}
For the sake of completeness, we detail the various equations used in the previous proof.  They are obtained by using repeatedly SDE~\eqref{SDE1}, via  stochastic calculus with marked Poisson processes,  see Section~\ref{mpp}. See the proof of Proposition~\ref{lembal} for an example of such a derivation. 

\noindent
For $t{\ge}0$,
\vspace{-3mm}
\begin{multline}\label{An0}
X_{{n_c}-1}^N(t)^2=X_{{n_c}-1}^N(0)^2{+}{M}^N_{{n_c-1,n_c-1}}(t)\\+\lambda_{{n_c}-2}\int_0^{t}\left(2X_{{n_c}-1}^N(u){+}1\right)\frac{X_1^N(u)X_{{n_c}-2}^N(u)}{N}\diff u\\
+\int_0^{t}\left({-}2X_{{n_c}-1}^N(u){+}1\right)\left(\mu_{{n_c}-1}\Phi(N)X_{{n_c}-1}^N(u)+\lambda_{{n_c}-1}\frac{X_1^N(u)X_{{n_c}-1}^N(u)}{N}\right)\diff u
\end{multline}
where $({M}^N_{{n_c-1,n_c-1}}(t))$ is a martingale whose previsible increasing process is given by 
\vspace{-3mm}
\begin{multline}\label{An0M}
\croc{{M}^N_{{n_c-1,n_c-1}}}(t) =
\lambda_{{n_c}-2}\int_0^{t}\left(2X_{{n_c}-1}^N(u){+}1\right)^2\frac{X_1^N(u)X_{{n_c}-2}^N(u)}{N}\diff u\\
{+}\int_0^{t}\left(-2X_{{n_c}-1}^N(u){+}1\right)^2\left(\mu_{{n_c}-1}\Phi(N)X_{{n_c}-1}^N(u){+}\lambda_{{n_c}-1}\frac{X_1^N(u)X_{{n_c}-1}^N(u)}{N}\right)\diff u.
\end{multline}

\medskip

\noindent
For  $2{\le}k{\le}n_c{-}2$ and $t{\ge}0$,
\vspace{-3mm}
\begin{multline}\label{Anm}
X_{n_c-k}^N(t)^2{=}X_{n_c-k}^N(0)^2+M^N_{n_c-k,n_c-k}(t)\\+\lambda_{n_c-k{-}1}\int_0^{t}\left(2X_{n_c-k}^N(u){+}1\right)\frac{X_1^N(u)X_{n_c-k-1}^N(u)}{N}\diff u\\
+\sum_{i=n_c-k+1}^{{n_c}-1}\mu_i\Phi(N)\int_0^{t} \int_{y\in{\cal S}_{i}}\left(2X_{n_c-k}^N(u)y_{n_c-k}{+}y_{n_c-k}^2\right)\nu_i(\diff y) X_{i}^N(u)\,\diff u\\
+\int_0^{t}\left(1{-}2X_{n_c-k}^N(u)\right)\left(\mu_{n_c-k}\Phi(N)X_{n_c-k}^N(u){+}\lambda_{n_c-k}\frac{X_1^N(u)X_{n_c-k}^N(u)}{N}\right)\diff u,
\end{multline}
$(M^N_{n_c-k,n_c-k}(t))$ is a martingale whose previsible increasing process is given by
\vspace{-3mm}
\begin{multline}\label{crocM2}
  \croc{M_{n_c-k,n_c-k}^N}(t){=}\lambda_{n_c-k{-}1}\int_0^{t}\left(2X_{n_c-k}^N(u){+}1\right)^2\frac{X_1^N(u)X_{n_c-k-1}^N(u)}{N}\diff u\\
+\sum_{i=n_c-k+1}^{{n_c}-1}\mu_i\Phi(N)\int_0^{t} \int_{y\in{\cal S}_{i}}\left(2X_{n_c-k}^N(u)y_{n_c-k}{+}y_{n_c-k}^2\right)^2\nu_i(\diff y) X_{i}^N(u)\,\diff u\\
+\int_0^{t}\left(1{-}2X_{n_c-k}^N(u)\right)^2\left(\mu_{n_c-k}\Phi(N)X_{n_c-k}^N(u){+}\lambda_{n_c-k}\frac{X_1^N(u)X_{n_c-k}^N(u)}{N}\right)\diff u. 
\end{multline}

\medskip

\noindent
For $t{\ge}0$,
\vspace{-3mm}
\begin{multline}\label{eqx1}
X_{{n_c}-1}^N(t)X_{1}^N(t)=X_{{n_c}-1}^N(0)X_{1}^N(0)+M_{n_c-1,1}^N(t)\\
+\int_0^{t}X_{1}^N(u)\left(\lambda_{{n_c}-2}\frac{X_{{n_c}-2}^N(u)X_{1}^N(u)}{N}-\lambda_{{n_c}-1}\frac{X_{{n_c}-1}^N(u)X_{1}^N(u)}{N}\right)\diff u\\
+\mu_{n_c-1}\Phi(N)\int_0^{t}\left[\rule{0mm}{4mm}{-}X_{1}^N(u){+}\croc{\nu_{n_c-1},I_{1}}\left(X_{{n_c}-1}^N(u)-1\right)\right]X_{n_c-1}^N(u)\diff u\\
+\int_0^{t}X_{{n_c}-1}^N(u)\Bigg(-\sum_{j=1}^{n_c-1}(1+\ind{j=1})\lambda_{j}\frac{X_{j}^N(u)X_{1}^N(u)}{N}
+\sum_{i=2}^{n_c-2}\mu_i\Phi(N)X_i^N(u)\croc{\nu_{i},I_{1}}\Bigg)\diff u\\
+\int_0^{t}\left(-\lambda_{n_c-2}\frac{X_{n_c-2}^N(u)X_{1}^N(u)}{N}+\lambda_{n_c-1}\frac{X_{n_c-1}^N(u)X_{1}^N(u)}{N}\right)\diff u,
\end{multline}
where $(M_{n_c-1,1}^N(t))$ is a martingale.

\medskip

\noindent
For $ 2{\le}h{\leq}n_c{-}2$ and $t{\ge}0$,
\vspace{-2mm}
\begin{multline}\label{eqx4}
X_{{n_c}-1}^N(t)X_{h}^N(t)=X_{{n_c}-1}^N(0)X_{h}^N(0)+M_{n_c-1,h}^N(t)\\
+\int_0^{t}X_{h}^N(u)\left(\lambda_{{n_c}-2}\frac{X_{{n_c}-2}^N(u)X_{1}^N(u)}{N}-\lambda_{{n_c}-1}\frac{X_{{n_c}-1}^N(u)X_{1}^N(u)}{N}\right)\diff u\\
+\mu_{n_c-1}\Phi(N)\int_0^{t}\left[\rule{0mm}{4mm}{-}X_{h}^N(u){+}\croc{\nu_{n_c-1},I_{h}}\left(X_{{n_c}-1}^N(u)-1\right)\right]X_{n_c-1}^N(u)\diff u\\
+\int_0^{t}X_{{n_c}-1}^N(u)\Bigg(\lambda_{h-1}\frac{X_{h-1}^N(u)X_{1}^N(u)}{N}-\lambda_{h}\frac{X_{h}^N(u)X_{1}^N(u)}{N}\\
\hspace{5cm}-\mu_{h}\Phi(N)X_{h}^N(u)+\sum_{i=h+1}^{n_c-2}\mu_i\Phi(N)X_i^N(u)\croc{\nu_{i},I_{h}}\Bigg)\diff u\\
-\ind{h=n_c-2}\lambda_{{n_c}-2}\int_0^{t}\frac{X_{{n_c}-2}^N(u)X_1^N(u)}{N}\diff u,
\end{multline}
where $(M_{n_c-1,h}^N(t))$ is a martingale.

\medskip

\noindent
Additional identities used to complete the proof of the proposition. \\
For $1{<}h{\leq}n_c{-}1$ and $t{\ge}0$,
\vspace{-5mm}
\begin{multline}\label{eqx5}
X_1^N(t)X_{h}^N(t){=}X_1^N(0)X_{h}^N(0)+ M_{1,h}^N(t)\\
{+}\lambda_{h-1}\int_0^{t}\left(X_1^N(u){-}1{-}\ind{h{=}2}\right)\frac{X_{h-1}^N(u)X_1^N(u)}{N}  \diff u\\
{+}\lambda_{h}\int_0^{t}\left(1{-}X_1^N(u)\right)\frac{X_{h}^N(u)X_1^N(u)}{N}   \diff u\\
-\sum_{j=2}^{n_c-1} \lambda_{j}\int_0^{t}X_h^N(u)\frac{X_{j}^N(u)X_1^N(u)}{N}  \diff u
-2\lambda_1\int_0^{t} X_h^N(u)\frac{X_1^N(u)^2}{N}\ind{X_1^N(u)\ge 2} \diff u\\
+\sum_{i=2}^{{n_c}-1}\mu_i\Phi(N)\int_0^{t}\int_{\cal{S}_i}\left[y_1X_h^N(u){+}y_h\left(\rule{0mm}{4mm} X_1^N(u){+}y_1\right)\ind{i{>}h}{-}\left(X_1^N(u){+}y_1\right)\ind{i=h}\right]\\\nu_i(\diff y)  X_i^N(u)\diff u,
\end{multline}
where $(M_{1,h}^N(t))$ is a martingale whose previsible increasing process is given by
\vspace{-3mm}
\begin{multline}\label{crocM1h}
  \croc{M_{1,h}^N}(t){=}\sum_{j\neq 1, h,h-1}\frac{\lambda_j}{N}\int_0^tX_h^N(u)^2X_j^N(u)X_1^N(u)\diff u\\
  +\lambda_{h}\int_0^{t}\left(1{-}X_1^N(u){-}X_h^N(u)\right)^2\frac{X_{h}^N(u)X_1^N(u)}{N}   \diff u\\
  +\ind{h=2}\left(\lambda_1\int_0^{t} \left(-2X_2^N(u)+X_1^N(u)-2\right)^2\frac{X_1^N(u)^2}{N}\ind{X_1^N(u)\ge 2}\diff u\right)\\
  +\ind{h>2}\left(\lambda_1\int_0^{t}\hspace{-2mm} 4X_h^N(u)^2\frac{X_1^N(u)^2}{N}\ind{X_1^N(u)\ge 2}\!\diff u\right.\\\left. {+}\lambda_{h-1}\int_0^{t}\hspace{-2mm}\left(X_1^N(u){-}X_h^N(u){-}1\right)^2\frac{X_{h-1}^N(u)X_1^N(u)}{N}\!\diff u\right)\\
  +\sum_{i=2}^{{n_c}-1}\mu_i\Phi(N)\int_{[0,t]{\times}\cal{S}_i}\left[y_1X_h^N(u){+}y_h\left(X_1^N(u){+}y_1\right)\ind{i{>}h}\right.\\\left.{-}\left(X_1^N(u){+}y_1\right)\ind{i=h}\right]^2\nu_i(\diff y)  X_i^N(u)\diff u,
\end{multline}
\medskip

\noindent
For $2{\le}k{\le}n_c{-}2$, $2{\le}h{<}n_{c}{-}k$ and $t{\ge}0$,
\begin{multline}\label{eqx6}
X_{{n_c}-k}^N(t)X_{h}^N(t)=X_{{n_c}-k}^N(0)X_{h}^N(0)+M_{n_c-k,h}^N(t)\\
+\int_0^{t}X_{h}^N(u)\left(\lambda_{{n_c}-k-1}\frac{X_{{n_c}-k-1}^N(u)X_{1}^N(u)}{N}
-\lambda_{{n_c}-k}\frac{X_{{n_c}-k}^N(u)X_{1}^N(u)}{N}\right)\diff u\\
+\mu_{n_c-k}\Phi(N)\int_0^{t}\left(\rule{0mm}{4mm}{-}X_{h}^N(u)
{+}\croc{\nu_{n_c-k},I_{h}}\left(X_{{n_c}-k}^N(u)-1\right)\right)X_{n_c-k}^N(u)\diff u\\
+\int_0^{t}X_{{n_c}-k}^N(u)\Bigg(\lambda_{h-1}\frac{X_{h-1}^N(u)X_{1}^N(u)}{N}
-\lambda_{h}\frac{X_{h}^N(u)X_{1}^N(u)}{N}\\
-\mu_{h}\Phi(N)X_{h}^N(u)
+\sum_{i=h+1}^{n_c-k-1}\mu_i\Phi(N)X_i^N(u)\croc{\nu_{i},I_{h}}\Bigg)\diff u\\
{+}\int_0^t\hspace{-2mm}\sum_{i=n_c-k+1}^{n_c-1}\hspace{-2mm}\mu_i\Phi(N)X_i^N(u)\left(\croc{\nu_{i},I_{h}}X_{n_c-k}^N(u){+}\croc{\nu_{i},I_{n_c-k}}X_h^N(u){+}\croc{\nu_{i},I_{n_c-k}}\croc{\nu_{i},I_{h}}\right)\diff u\\
-\ind{h=n_c-k-1}\lambda_{{n_c}-k-1}\int_0^{t}\frac{X_{{n_c}-k-1}^N(u)X_1^N(u)}{N}\diff u,
\end{multline}
where $(M_{n_c-k,h}^N(t))$ is a martingale whose previsible increasing process is given by
\vspace{-3mm}
\begin{multline}\label{crocM}
\croc{M_{n_c-k,h}^N}(t)\\{=}\ind{h{=}n_c{-}k{-}1}\lambda_{{n_c}-k-1}\int_0^{t}\left(X_{{n_c}-k-1}^N(u){-}X_{{n_c}-k}^N(u)-1\right)^2\frac{X_{{n_c}-k-1}^N(u)X_1^N(u)}{N}\diff u\\
+\int_0^{t}X_{h}^N(u)^2\left(\lambda_{{n_c}-k-1}\frac{X_{{n_c}-k-1}^N(u)X_{1}^N(u)}{N}\ind{h<n_c-k-1}{+}\lambda_{{n_c}-k}\frac{X_{{n_c}-k}^N(u)X_{1}^N(u)}{N}\right)\diff u\\
+\mu_{n_c{-}k}\Phi(N)\int_0^{t}\int_{y\in{\cal S}_{n_c-k}} \left(X_{h}^N(u){-}y_h (X_{{n_c}-k}^N(u)-1)\right)^2X_{n_c-k}^N(u)\,\nu_{n_c-k}(\diff y)\diff u\\
+\int_0^{t}X_{{n_c}-k}^N(u)^2\Bigg(\lambda_{h-1}\frac{X_{h-1}^N(u)X_{1}^N(u)}{N}{+}\lambda_{h}\frac{X_{h}^N(u)X_{1}^N(u)}{N}\ind{h<n_c-k-1}\\
{+}\mu_{h}\Phi(N)X_{h}^N(u)+
\sum_{i=h+1}^{n_c-k-1}\mu_i\Phi(N)X_i^N(u)\croc{\nu_{i},I_{h}^2}
\Bigg)\,\diff u\\
+\sum_{n_c-k+1}^{n_c-1} \mu_i \Phi(N)\int_{{\cal S}_{i}} \int_0^{t}\left(y_hX_{{n_c}-k}^N(u)+y_{n_c-k}X_{h}^N(u)+y_hy_{n_c-k}\right)^2 X_i^N(u)\nu_{i}(\diff y) \,\diff u.
\end{multline}

\end{document}